\documentclass[a4paper, 11pt]{article}
\usepackage{latexsym,amsfonts, amsmath, amssymb, amsthm}
\usepackage[utf8]{inputenc}
\usepackage[final=true,protrusion=true,expansion=true]{microtype}
\usepackage[noadjust]{cite}
\usepackage{enumitem}
\usepackage{color}
\usepackage{mathtools}

\usepackage{booktabs}
\usepackage{multirow}

\usepackage{subcaption}
\usepackage[labelfont=bf]{caption}
\usepackage[affil-it]{authblk}

%
\usepackage{geometry}
\geometry{
a4paper,
total={160mm,247mm},
left=26mm,
right=26mm,
top=20mm,
}

\usepackage{hyperref}
\hypersetup{
	final,
    colorlinks=true,
    linkcolor=blue,
    filecolor=magenta,
    urlcolor=cyan,
}

\theoremstyle{plain}
\newtheorem{theorem}{Theorem}[section]
\newtheorem{proposition}[theorem]{Proposition}
\newtheorem{lemma}[theorem]{Lemma}

\theoremstyle{plain}
\newtheorem{definition}[theorem]{Definition}

\theoremstyle{definition}
\newtheorem{remark}[theorem]{Remark}
\newtheorem{example}[theorem]{Example}


\providecommand*{\Var}[1]{\operatorname{Var}\left({#1}\right)}   



   %






\providecommand{\Supp}{\operatorname{supp}}                            
\providecommand{\supp}{\Supp}



\providecommand{\argmin}{\operatorname*{\arg\min}}  
\providecommand{\Id}{\Op{Id}}                     









 %








\providecommand{\CC}{{\cal C}}

\providecommand{\CE}{{\cal E}}
\providecommand{\CF}{{\cal F}}

\providecommand{\CM}{{\cal M}}
\providecommand{\CN}{{\cal N}}
\providecommand{\CO}{{\cal O}}
\providecommand{\CP}{{\cal P}}

\providecommand{\CT}{{\cal T}}

\providecommand{\CV}{{\cal V}}



\providecommand{\bbE}{\mathbb{E}}

\providecommand{\bbN}{\mathbb{N}}

\providecommand{\bbP}{\mathbb{P}}

\providecommand{\bbR}{\mathbb{R}}
\providecommand{\bbS}{\mathbb{S}}






\providecommand*{\abs}[1]{\left|{#1}\right|} 
\providecommand*{\N}[1]{\left\|{#1}\right\|} 
\providecommand*{\Nnormal}[1]{\|{#1}\|} 
\providecommand*{\Nbig}[1]{\big\|{#1}\big\|} 
\providecommand*{\textN}[1]{\|{#1}\|} 

\newcommand*{\SN}[1]{\left|{#1}\right|}      

\providecommand*{\abs}[1]{\left|{#1}\right|} 





\newcommand*{\Op}[1]{\mathsf{#1}} 




















\usepackage{bbm}
\usepackage{arydshln}
\usepackage{subcaption}
\usepackage{mdframed}
\usepackage{xcolor}
\usepackage{ifdraft}
\usepackage{scalerel}
\usepackage{mathrsfs}  



\newif\ifrevised
\newcommand{\revised}[1]{%
	\ifrevised
		\color{purple} #1 \color{black} 
	\else
		#1%
	\fi}
\revisedfalse
\newif\ifrevisedTwo
\newcommand{\revisedTwo}[1]{%
	\ifrevisedTwo
		\color{purple} #1 \color{black} 
	\else
		#1%
	\fi}
\revisedTwofalse
\newif\ifrevisedThree

\revisedThreefalse
\newif\ifMFAappendix

\MFAappendixtrue


\newcommand{\overbar}[1]{\makebox[0pt]{$\phantom{#1}\mkern 1.5mu\overline{\mkern-1.5mu\phantom{#1}\mkern-1.5mu}\mkern 1.5mu$}#1}
\renewcommand{\underbar}[1]{\makebox[0pt]{$\phantom{#1}\mkern 1.5mu\underline{\mkern-1.5mu\phantom{#1}\mkern-1.5mu}\mkern 1.5mu$}#1}

\newcommand{\overbarscript}[1]{\mkern 1.5mu\overline{\mkern-1.5mu#1\mkern-1.5mu}\mkern 0mu}
\newcommand{\underbarscript}[1]{\mkern 1.5mu\underline{\mkern-1.5mu#1\mkern-1.5mu}\mkern 1.5mu}

\newcommand{\divergence}{\textrm{div}}

\newcommand{\globmin}{{v^*}}
\newcommand{\minobj}{\underbar \CE}

\newcommand{\cutoff}[1]{H\!\left({#1}\right)}
\newcommand{\cutoffnoarg}{H}

\newcommand{\omegaa}[0]{\omega_{\alpha}}

\newcommand{\conspoint}[1]{v_{\alpha}({#1})}
\newcommand{\conspointnoarg}{v_{\alpha}}
\newcommand{\textconspoint}[1]{v_{\alpha}({#1})}

\newcommand{\empmeasure}[1]{\widehat\rho_{#1}^N}

\newcommand{\monopmeasure}[1]{\overbar\rho_{#1}^N}

\newcommand{\indivmeasure}[0]{\varrho} 

\DeclareMathOperator*{\Law}{Law}

\makeatother

\title{\usefont{OT1}{bch}{b}{n}
	\huge Consensus-based optimization methods\\converge globally \\
}

\date{}

\author[1,2,3]{Massimo Fornasier\thanks{Email: \texttt{massimo.fornasier@ma.tum.de}}}
\affil[1]{Technical University of Munich, School of Computation, Information and Technology, Department of Mathematics, Munich, Germany}
\affil[2]{Munich Center for Machine Learning, Munich, Germany}
\affil[3]{Munich Data Science Institute, Munich, Germany}
\author[4,5]{Timo Klock\thanks{Email: \texttt{tmklock@googlemail.com}}}
\affil[4]{Simula Research Laboratory, Department of Numerical Analysis and Scientific Computing, Oslo, Norway}
\affil[5]{University of San Diego, California, Department of Mathematics, San Diego, USA}
\author[1,2]{Konstantin Riedl\thanks{Email: \texttt{konstantin.riedl@ma.tum.de}}}

\begin{document}
\maketitle
\begin{abstract}
\noindent
In this paper, we study consensus-based optimization (CBO), which is a multi-agent metaheuristic derivative-free optimization method that can globally minimize nonconvex nonsmooth functions and is amenable to theoretical analysis.
Based on an experimentally supported intuition that, on average, CBO  performs a gradient descent of the squared Euclidean distance to the global minimizer, we devise a novel technique for proving the convergence to the global minimizer in mean-field law for a rich class of objective functions.
The result unveils internal mechanisms of CBO that are responsible for the success of the method. In particular, we prove that CBO performs a convexification of \revisedTwo{a large} class of optimization problems as the number of optimizing agents goes to infinity.
Furthermore, we improve prior analyses by requiring \revisedTwo{mild} assumptions about the initialization of the method and by covering objectives that are merely locally Lipschitz continuous.
As a core component of this analysis, we establish a quantitative nonasymptotic Laplace principle, which may be of independent interest.
From the result of CBO convergence in mean-field law, it becomes apparent that the hardness of any global optimization problem is necessarily encoded in the rate of the mean-field approximation, for which we provide a novel probabilistic quantitative estimate.
The combination of these results allows to obtain probabilistic global convergence guarantees of the numerical CBO method. 
\end{abstract}

{\noindent\small{\textbf{Keywords:} global optimization, derivative-free optimization, nonsmoothness, nonconvexity, metaheuristics, consensus-based optimization, mean-field limit, Fokker-Planck equations}}\\

{\noindent\small{\textbf{AMS subject classifications:} 65K10, 90C26, 90C56, 35Q90, 35Q84}}

\section{Introduction} \label{sec:introduction}
A long-standing problem in applied mathematics is the global minimization of a potentially nonconvex nonsmooth cost function $\CE: \bbR^d\rightarrow \bbR$ and the search for an associated globally minimizing argument $\globmin$.
\revised{Throughout,} we assume the unique existence of \revised{the} minimizer $\globmin$ and \revised{denote its associated minimal value by}
\begin{align*}
	\minobj := \CE(\globmin) = \inf_{v\in \bbR^d}\CE(v).
\end{align*}
\revised{The objective~$\CE$ is supposed to be locally Lipschitz continuous and to satisfy a tractability condition of the form~$\N{v-\globmin}_2 \leq \left(\CE(v)-\minobj\right)^\nu\!/\eta$ in a neighborhood of $\globmin$, see Assumption~\ref{asm:icp} for the details.}
While computing $\minobj$ or $\globmin$ are in general NP-hard problems \revised{under such conditions,} several instances arising in real-world scenarios can, at least empirically, be solved within reasonable accuracy and moderate computational time.
\revised{In the present work we are concerned with the class of derivative-free optimization algorithms, i.e., methods that are based exclusively on the evaluation of the objective function~$\CE$.}
\revised{Amongst them and achieving} the state of the art on challenging problems such as the Traveling Salesman Problem, are so-called metaheuristics \cite{blum2003metaheuristics,aarts1989simulated,back1997handbook,reeves2010genetic,kennedy1995particle}.
Metaheuristics orchestrate an interaction between local improvement procedures and global strategies, and combine deterministic and random decisions, to create a process capable of escaping from local optima and performing a robust search of the solution space.
Examples include
Random Search~\cite{rastrigin1963convergence},
Evolutionary Programming~\cite{fogel2006evolutionary},
the Metropolis-Hastings algorithm~\cite{hastings1970monte},
Genetic Algorithms~\cite{holland1992adaptation},
Particle Swarm Optimization \cite{kennedy1995particle},
and Simulated Annealing~\cite{aarts1989simulated}.
Despite their tremendous empirical success and widespread use in practice, many metaheuristics, due to their complexity, lack a proper mathematical foundation that could prove robust convergence to global minimizers under suitable assumptions.
Nevertheless, for some of them, such as Random Search or Simulated Annealing, there exist probabilistic guarantees for global convergence, see, e.g., \cite{solis1981minimization,holley1989asymptotics}.
While transferring some of the ideas of~\cite{solis1981minimization} to Particle Swarm Optimization allows to establish guaranteed convergence to global minima, the proof argument uses a computational time coinciding with the time necessary to examine every location in the search space~\cite{van2007analysis}.

Recently, the authors of \cite{pinnau2017consensus,carrillo2018analytical} have introduced consensus-based optimization (CBO) methods, which follow the guiding principles of metaheuristic algorithms, but are of much simpler nature and more amenable to theoretical analysis.
Inspired by consensus dynamics and opinion formation, CBO methods use a finite number of agents $V^1,\ldots,V^N$, which are formally stochastic processes, to explore the domain and to form a global consensus about the location of the minimizer $\globmin$ as time passes.
The dynamics of the agents $V^1,\ldots,V^N$ are governed by two competing terms.
A drift term drags \revised{each} agent towards an instantaneous consensus point, \revised{denoted by $\conspointnoarg$,} which is computed as a weighted average of \revised{all} agents' positions and serves as a momentaneous proxy for the global minimizer $\globmin$.
\revised{This term may be deactivated individually for an agent if its position improves upon the consensus point through modulating the drift by a function $H$ approximating the Heaviside function.}
The second term is stochastic and randomly diffuses agents according to a scaled Brownian motion in~$\bbR^d$, featuring the exploration of the energy landscape of the cost~$\CE$.
Ideally, as result of the described drift-diffusion mechanism,  the agents eventually achieve a near optimal global consensus, in the sense that the associated empirical measure
$\empmeasure{t} := \frac{1}{N} \sum_{i=1}^{N} \!\delta_{V_t^i}$
converges to a Dirac delta $\delta_{\widetilde v}$ at some $\widetilde v \in \bbR^d$ close to $\globmin$.

Let us now provide a formal description of the method.
Given a time horizon $T > 0$ and a time discretization $t_0 = 0 < \Delta t < \!\cdots< K \Delta t = T$ of $[0,T]$, we denote the location of agent $i$ at time $k\Delta t$ by $V_{k\Delta t}^i$, $k=0,\ldots,K$. For user-specified parameters
 $\alpha,\lambda,\sigma > 0$, the time-discrete evolution of
the $i$-th agent is defined by the update rule
\begin{alignat}{2} \label{eq:dyn_micro_discrete}
	\phantom{V_{(k+1)\Delta t}^i - V_{k\Delta t}^i} &\begin{aligned}[c]
		\mathllap{V_{(k+1)\Delta t}^i - V_{k\Delta t}^i} = &- \Delta t\lambda\left( V_{k\Delta t}^i - \conspoint{\empmeasure{k\Delta t}}\right)\cutoff{\CE( V_{k\Delta t}^i)-\CE\!\left(\conspoint{\empmeasure{k\Delta t}}\right)}\\
		&+ \sigma \N{ V_{k\Delta t}^i-\conspoint{\empmeasure{k\Delta t}}}_2  B_{k\Delta t}^i,\\
	\end{aligned} \\
	\mathllap{V_0^i} &\sim \rho_0 \quad \text{for all } i =1,\ldots,N, \label{eq:dyn_micro_discrete_2}
\end{alignat}
where $(( B_{k\Delta t}^i)_{k=0,\ldots,K-1})_{i=1,\ldots,N}$ are independent, identically distributed Gaussian random vectors in $\bbR^d$ with zero mean and covariance matrix $\Delta t \Id_d$.
The system is complemented with independent initial data~$(V_0^i)_{i=1,\ldots,N}$, distributed according to a common initial law~$\rho_0$.
Equation~\eqref{eq:dyn_micro_discrete} originates from a simple Euler-Maruyama time discretization
~\cite{higham2001algorithmic,platen1999introduction} of the system of stochastic differential equations~(SDEs)
\begin{align} \label{eq:dyn_micro}
dV_t^i &= -\lambda\left(V_t^i - \conspoint{\empmeasure{t}}\right)\cutoff{\CE(V_t^i)-\CE\!\left(\conspoint{\empmeasure{t}}\right)}dt
+ \sigma \N{V_t^i-\conspoint{\empmeasure{t}}}_2 dB_t^i,\\
V_0^i &\sim \rho_0\quad \text{for all } i =1,\ldots,N,
\end{align}
where $((B_t^i)_{t\geq 0})_{i = 1,\ldots,N}$ are now independent standard Brownian motions in $\bbR^d$.
As mentioned in the informal description above, the updates in the evolutions~\eqref{eq:dyn_micro_discrete} and \eqref{eq:dyn_micro} consist of two terms, respectively.
The first term  is the drift towards the momentaneous consensus $\textconspoint{\empmeasure{t}}$, which is defined by
\begin{align} \label{eq:momentaneous_consensus}
	\conspoint{\empmeasure{t}} := \int v \frac{\omegaa(v)}{\N{\omegaa}_{L_1(\empmeasure{t})}}\,d\empmeasure{t}(v),
	\quad \text{with}\quad
	\omegaa(v) := \exp(-\alpha \CE(v)).
\end{align}
Definition~\eqref{eq:momentaneous_consensus} is motivated by the well-known Laplace principle \cite{dembo2009large,miller2006applied,pinnau2017consensus}, which states that,
for any absolutely continuous probability distribution $\indivmeasure$ on $\bbR^d$, we have 
\begin{align}
\label{eq:laplace_principle}
    \lim\limits_{\alpha\rightarrow \infty}\left(-\frac{1}{\alpha}\log\left(\int\omegaa(v)\,d\indivmeasure(v)\right)\right) = \inf\limits_{v \in \supp(\indivmeasure)}\CE(v).
\end{align}
Alternatively, we can also interpret \eqref{eq:momentaneous_consensus} as an approximation of $\argmin_{i=1,\ldots,N}\CE(V_t^i)$, which improves as $\alpha\rightarrow \infty$, provided the minimizer uniquely exists.
The univariate function $\cutoffnoarg : \bbR\rightarrow [0,1]$ appearing in the first term of \eqref{eq:dyn_micro_discrete} and \eqref{eq:dyn_micro} can be used to deactivate the drift term for agents $V_t^i$, whose objective \revised{is better than the one of} the momentaneous consensus, i.e., for which $\CE(V_t^i) < \CE(\textconspoint{\empmeasure{t}})$, by setting $\cutoff{x} \approx \mathbbm{1}_{x \geq 0}$.
The most frequently studied choice however is $\cutoffnoarg \equiv 1$.
The second term in \eqref{eq:dyn_micro_discrete} and \eqref{eq:dyn_micro} encodes the diffusion or exploration mechanism of the algorithm.
Intuitively, scaling by $\textN{V_t^i-\textconspoint{\empmeasure{t}}}_2$ encourages agents far from the consensus point to explore larger regions, whereas agents close to the consensus point try to enhance their position only locally.
Furthermore, the scaling is essential to eventually deactivate the Brownian motion and to achieve consensus among the individual agents.

CBO methods have been considered and analyzed in several recent papers \cite{kim2020stochastic,totzeck2020consensus,carrillo2018analytical,carrillo2019consensus,chen2020consensus,fornasier2020consensus_hypersurface_wellposedness,fornasier2020consensus_sphere_convergence,fornasier2021anisotropic,fornasier2021convergence,borghi2021constrained,carrillo2021consensus,kalise2022consensus}, even for optimization problems in high-dimensional and non-Euclidean settings, and using more sophisticated rules for the parameter choices $\alpha$ and $\sigma$ inspired by Simulated Annealing~\cite{carrillo2019consensus, fornasier2020consensus_sphere_convergence}.
Moreover, several variants of the dynamics have been proposed, such as  ones integrating memory mechanisms~\cite{totzeck2020consensus,riedl2022leveraging} or others using jump-diffusion processes~\cite{kalise2022consensus}.
\revised{To make the method feasible and competitive for large-scale applications, in particular, for problems arising in machine learning, random mini-batch sampling techniques have been employed when evaluating the objective function or computing the consensus point.
This significantly reduces the computational and communication complexity of CBO methods~\cite{carrillo2019consensus,fornasier2021convergence}
and further enables the parallelization of the algorithm by evolving disjoint subsets of particles independently for some time with separate consensus points, before aligning the dynamics through a global communication step.
However, despite bearing interesting questions concerning the trade-off between parallel efficiency and performance when it comes to the relevance of communication between the individual agents, this is a so far largely unexplored area for CBO.}
\revised{As an example for the applicability of CBO to such high-dimensional problems, we refer to \cite{carrillo2019consensus,fornasier2021convergence,riedl2022leveraging} where the method is used for training a shallow and a convolutional neural network for image classification of the MNIST database of handwritten digits~\cite{MNIST}, to the recent paper~\cite{trillos2023FedCBO} where CBO is used in the setting of clustered federated learning, to \cite{riedl2022leveraging} where a compressed sensing problem is solved, or to the line of works~\cite{fornasier2020consensus_sphere_convergence,fornasier2020consensus_hypersurface_wellposedness,fornasier2021anisotropic} where \eqref{eq:dyn_micro_discrete} and \eqref{eq:dyn_micro} are adapted to the sphere $\bbS^{d-1}$ achieving near state-of-the-art performance on a phase retrieval, a robust subspace detection problem and when  robustly computing eigenfaces.}
\revised{Recently, also general constrained optimization problems have been tackled by CBO through the use of penalization techniques, which allow to cast the constrained problem into an unconstrained optimization task~\cite{borghi2021constrained,carrillo2021consensus}.}

As initially mentioned, CBO methods are motivated by the urge to develop a class of metaheuristic algorithms with provable guarantees, while preserving their capabilities of escaping local minima through global optimization mechanisms.
The main theoretical interest focuses on understanding when consensus formation of \mbox{$\empmeasure{t}\rightarrow \delta_{\widetilde v}$} occurs, and on quantitatively bounding the associated errors $\CE(\widetilde v) - \minobj$ and $\Nnormal{\widetilde v - \globmin}_2$.
A theoretical analysis of the dynamics can either be done on the microscopic systems~\eqref{eq:dyn_micro_discrete} or \eqref{eq:dyn_micro}, as for instance in \cite{ha2020convergenceHD,ha2021convergence}, or, as in~\cite{pinnau2017consensus,carrillo2018analytical}, by analyzing the macroscopic behavior of the agent density through a mean-field limit associated with the particle-based dynamics~\eqref{eq:dyn_micro}, given, for initial data~$\overbar V_0 \sim \rho_0$, by
\begin{align} \label{eq:dyn_macro}
	d\overbar V_t
	= -\lambda\left(\overbar V_t - \conspoint{\rho_t}\right)\cutoff{\CE(\overbar  V_t)-\CE(\conspoint{\rho_t})}dt
	+ \sigma \N{\overbar  V_t-\conspoint{\rho_t}}_2 dB_t,
\end{align}
where $\rho_t = \Law(\overbar V_t)$.
\revised{The weak convergence of the microscopic system~\eqref{eq:dyn_micro} to the mean-field limit~\eqref{eq:dyn_macro}, or, more precisely, of the empirical measure~$\empmeasure{t}$ to $\rho_t$ as $N\rightarrow\infty$, has been shown recently in \cite{huang2021MFLCBO}, see also Remark~\ref{rem:convergence_details} for additional details.
This legitimates to analyze \eqref{eq:dyn_macro} in lieu of \eqref{eq:dyn_micro}.}
The measure $\rho\in\CC([0,T],\CP(\bbR^d))$ with $\rho_t = \rho(t)= \Law(\overbar V_t)$ satisfies the \revised{nonlinear nonlocal} Fokker-Planck equation
\begin{equation} \label{eq:fokker_planck}
	\partial_t\rho_t
	= \lambda\divergence \big(\!\left(v - \conspoint{\rho_t}\right)\cutoff{\CE(v)-\CE(\conspoint{\rho_t})}\rho_t\big)
	+ \frac{\sigma^2}{2}\Delta\big(\!\N{v-\conspoint{\rho_t}}_2^2\rho_t\big)
\end{equation}
in a weak sense (see Definition~\ref{def:fokker_planck_weak_sense}).
Leveraging this partial differential equation (PDE), the authors of~\cite{pinnau2017consensus,carrillo2018analytical} analyze the large time behavior of the particle density \mbox{$t \mapsto \rho_t$} instead of the microscopic systems~\eqref{eq:dyn_micro_discrete} and \eqref{eq:dyn_micro}.
Studying the mean-field limit~\eqref{eq:dyn_macro} or \eqref{eq:fokker_planck} allows for agile deterministic calculus tools and typically leads to stronger theoretical results, which characterize the average agent behavior through the evolution of~$\rho$.
This analysis perspective is justified by the mean-field approximation, which quantifies the convergence of the microscopic system~\eqref{eq:dyn_micro} to the mean-field limit~\eqref{eq:dyn_macro} as the number of agents grows.
We discuss results about the mean-field approximation in Remark~\ref{rem:convergence_details} and make it rigorous in Proposition~\ref{prop:MFL}.
Hence, in view of its validity and as already done in the preceding works~\cite{pinnau2017consensus,carrillo2018analytical}, in the first part of the paper we concentrate on \revised{establishing} convergence in mean-field law \revised{for \eqref{eq:dyn_micro}, as defined in Definition~\ref{def:mean_field_law_convergence} below.}
\revised{That is, we analyze the mean-field dynamics~\eqref{eq:dyn_macro} and~\eqref{eq:fokker_planck} in place of the interacting particle system~\eqref{eq:dyn_micro}.}
Afterwards, \revised{by combining the mean-field approximation with convergence in mean-field law,} we close the paper with a global convergence result for the numerical method~\eqref{eq:dyn_micro_discrete}.

\begin{definition}[Convergence in mean-field law] \label{def:mean_field_law_convergence}
	Let $F,G: \CP(\bbR^d) \otimes \bbR^d \rightarrow \bbR^d$ be two functions and consider for $i=1,\ldots,N$ the SDEs expressed in It\^o's form as
	\begin{align*}
        \textstyle
		dV_t^i
		= F\!\left(\empmeasure{t}, V_t^i\right)dt + G\!\left(\empmeasure{t}, V_t^i\right)dB_t^i,\quad &\text{where }
        \textstyle
        \empmeasure{t}
		= \frac{1}{N}\sum_{i=1}^{N} \delta_{V_t^i},\ \text{and } V_0^i \sim \rho_0.\\
		\intertext{We say that this SDE system converges in mean-field law to $\widetilde v \in \bbR^d$ if all solutions of}
		d\overbar V_t
		= F\!\left(\rho_t, \overbar V_t\right)dt + G\!\left(\rho_t, \overbar V_t\right)dB_t,\quad &\text{where } \rho_t
		= \Law(\overbar V_t),\ \text{and } \overbar V_0 \sim \rho_0,
	\end{align*}
	satisfy $\lim_{t\rightarrow \infty} W_p\left(\rho_t,\delta_{\widetilde v}\right) = 0$ for some \mbox{Wasserstein-$p$} distance~$W_p$, $p\geq1$.
\end{definition}

\revised{Colloquially speaking, an interacting multi-particle system is said to converge {\it in mean-field law}, if the associated mean-field dynamics converges.}


\begin{remark}[Mean-field approximation]
\label{rem:convergence_details}
	The definition of convergence in mean-field law \revised{as given in Definition~\ref{def:mean_field_law_convergence}} is justified as follows:
	As the number of agents~$N$ in the interacting particle system~\eqref{eq:dyn_micro} tends to infinity, one expects that, for any particle~$V^i$, the individual influence of any other particle disperses.
	This results in an averaged influence of the \revised{ensemble} rather than an interacting nature of the system, and allows to describe the dynamics in the large-particle limit by the law $\rho$ of the mono-particle process~\eqref{eq:dyn_macro}.
	This phenomenon is known as the mean-field approximation. 
	More formally, as \mbox{$N\rightarrow\infty$,} we expect the empirical measure~$\empmeasure{t}$ to converge in law to $\rho_t$ for almost every $t$, see~\cite[Definition~1]{jabin2017mean}.
	The classical way to establish such mean-field approximation is to prove, \revised{by means of the coupling method,} propagation of chaos~\cite{mckean1967propagation,sznitman1991propagation}, as implied for instance by
	\begin{align} \label{eq:mean-field_approximation}
        \max_{i=1,\dots,N}\sup_{t\in[0,T]}\bbE\Nbig{V^i_t-\overbar V^i_t}_2^2\leq CN^{-1},
	\end{align}
	where $\overbar V^i$ denote $N$ i.i.d.\@~copies of the mean-field dynamics~\eqref{eq:dyn_macro}, \revised{which are coupled to the processes $V^i$ by choosing the same initial conditions as well as Brownian motion paths, see, e.g., the recent review~\cite{chaintron2021propagation,chaintron2022propagation}.}
	Despite being of fundamental numerical interest \revised{(since when combined with the convergence in mean-field law it allows to establish convergence of the interacting particle system itself),} a quantitative result about the mean-field approximation of CBO as in~\eqref{eq:mean-field_approximation} has been left as a difficult and open problem in \cite[Remark~3.3]{carrillo2018analytical} \revisedTwo{due to a lack of global Lipschitz continuity of the drift and diffusion terms, which impedes the application of McKean's theorem~\cite[Theorem~3.1]{chaintron2021propagation}.}

	However, the present work as well as \revised{three} recent works, which we outline in what follows, are shedding light on this issue. By employing a compactness argument in the path space, the authors of~\cite{huang2021MFLCBO} show that the empirical random particle measure~$\widehat\rho^N$ associated with the dynamics~\eqref{eq:dyn_micro} converges in distribution to the deterministic particle distribution~$\rho\in\CC([0,T],\CP(\bbR^d))$ satisfying~\eqref{eq:fokker_planck}.
	In particular, their result is valid for unbounded functions $\CE$ considered also in our work.
	While this does not allow for obtaining a quantitative convergence rate with respect to the number of particles~$N$ as in~\eqref{eq:mean-field_approximation}, it closes the mean-field limit gap qualitatively.
	A desired quantitative result has been established recently in~\cite[Theorem~3.1 and Remark~3.1]{fornasier2020consensus_hypersurface_wellposedness} for a variant of the microscopic system \eqref{eq:dyn_micro} supported on a compact hypersurface~$\Gamma$.
	In~\cite{fornasier2020consensus_hypersurface_wellposedness} the weak convergence of the variant of~\eqref{eq:dyn_micro} to the corresponding mean-field limit is established in the sense that for all $\phi\in\CC^1_{b}(\bbR^d)$ it holds
	\begin{align*}
		\sup_{t\in[0,T]} \bbE\left[\left|\langle\widehat\rho^N_t,\phi\rangle-\langle\rho_t,\phi\rangle\right|^2\right] \leq \frac{C}{N}\N{\phi}^2_{\CC^1(\bbR^d)} \rightarrow0 \quad \text{as } N\rightarrow\infty.
	\end{align*}
	The obtained convergence rate reads $CN^{-1}$ with $C$ depending in particular on
	\begin{align*}
		\textstyle
		C_{\alpha} := \exp\big(\alpha \big(\sup_{v\in\Gamma} \CE(v) - \inf_{v \in \Gamma} \CE(v)\big)\big).
	\end{align*}
	Their proof is based on the \revised{aforementioned coupling method and, by exploiting the inherent compactness of the dynamics due to its confinement to $\Gamma$, allows} to derive a bound of the form~\eqref{eq:mean-field_approximation}.
	Leveraging the techniques from~\cite{fornasier2020consensus_hypersurface_wellposedness} and the boundedness of moments established in \cite[Lemma~3.4]{carrillo2018analytical}, we provide in Proposition~\ref{prop:MFL} below a result of the form~\eqref{eq:mean-field_approximation} on the plane~$\bbR^d$ which holds with high probability.
	\revisedTwo{A more refined analysis conducted recently by the authors of \cite{gerber2023propagation},
	which adapts Sznitman's classical argument for the proof of McKean's theorem with the intention of allowing for coefficients which are not globally Lipschitz,
	even yields a non-probabilistic mean-field approximation of the form~\eqref{eq:mean-field_approximation} in the pathwise sense, requiring in comparison merely a higher moment bound~$\rho_0 \in \CP_{6}(\bbR^d)$ of the initial measure, see \cite[Theorem~2.6]{gerber2023propagation}.}

	Such quantitative mean-field approximation results substantiate the focus of the first part of this work on the analysis of the macroscopic mean-field dynamics~\eqref{eq:dyn_macro} and~\eqref{eq:fokker_planck}.
	Nevertheless, as a consequence thereof, we return to the analysis of the numerical scheme \eqref{eq:dyn_micro_discrete} and its global convergence in Section \ref{subsec:convergence_probability}.
\end{remark}

\paragraph{Contributions}
In this work we unveil the surprising phenomenon that, in the mean-field limit, for a \revisedTwo{rich} class of objectives $\CE$, the individual agents of the CBO dynamics follow the gradient flow associated with the function $v\mapsto \Nnormal{v-\globmin}_2^2$, on average over all realizations of Brownian motion paths, see Figure \ref{figure:intuition}.
\begin{figure}
	\centering
	\subcaptionbox{\footnotesize The Rastrigin function~$\CE$ and an exemplary initialization for one run of the experiment \label{fig:objective2d}}{\includegraphics[trim=13 21 3 22,clip,width=0.46\textwidth]{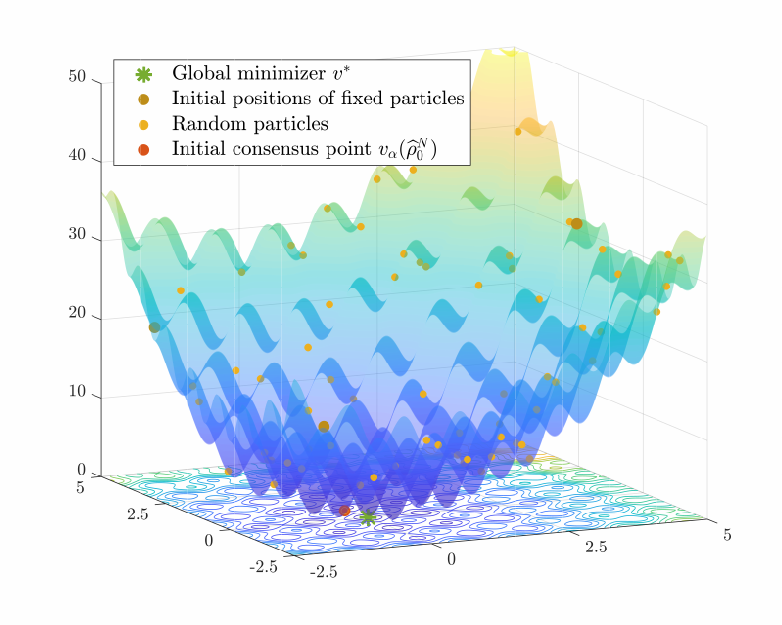}
	}
\hspace{1.5em}
	\subcaptionbox{\footnotesize Individual agents follow, on average, the gradient flow of the map $v\mapsto\N{v-\globmin}_2^2$. \label{fig:intuition_ce}}{\includegraphics[trim=28 209 31 200,clip,width=0.43\textwidth]{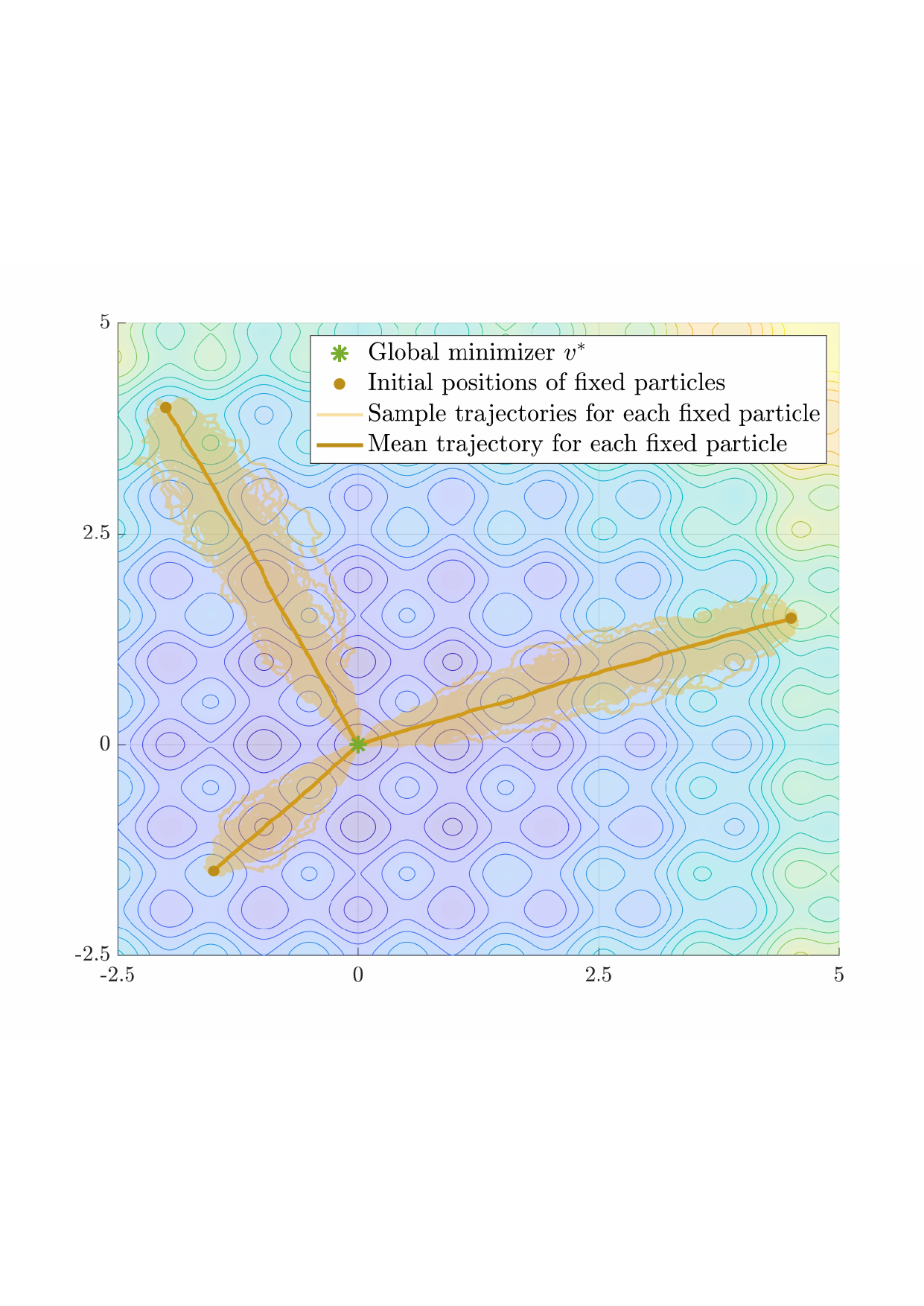}
	}
	\caption{An illustration of the internal mechanisms of CBO.
	We perform $100$ runs of the CBO algorithm \eqref{eq:dyn_micro_discrete}--\eqref{eq:dyn_micro_discrete_2}, with parameters $\Delta t=0.01$, $\alpha = 10^{15}$, $\lambda = 1$ and $\sigma = 0.1$, and $N=32000$ agents \revised{initialized according to} $\rho_0 = \CN\big((8,8), 20\big)$.
	In addition, we add three individual agents with starting locations $(-2,4)$, $(-1.5,-1.5)$ and $(4.5,1.5)$ to the set of agents in each run as shown in \textbf{(a)}, and depict each of their $100$ trajectories \revised{as well as} their mean trajectory in yellow color in \textbf{(b)}.
    \revised{With the (mean) trajectories being rather straight lines, we}
	observe that \revised{the individual agents} take a straight path from their initial positions to the global minimizer~$v^*$ and, in particular, disregard the local landscape of the objective function~$\CE$.
	The trajectories \revised{of the individual agents become more concentrated} as the \revised{overall} number of agents~$N$ grows.}
	\label{figure:intuition}
	\vspace{-0.5cm}
\end{figure}
Interestingly, this gradient flow is independent of the underlying energy landscape of $\CE$.
In other words, CBO performs a canonical convexification of \revisedTwo{a large} class of optimization problems as the number of optimizing agents $N$ goes to infinity.
Based on these observations and justified by the mean-field approximation, first of all we develop a novel proof \revisedTwo{framework} for showing the convergence of the CBO dynamics in mean-field law to the global minimizer $\globmin$ for a rich class of objectives.
While previous analyses in \cite{ha2020convergenceHD,ha2021convergence,carrillo2018analytical} required \revisedTwo{restrictive} concentration conditions about the initial measure $\rho_0$ and $\CC^2$ regularity of the objective, we derive results that are valid under \revisedTwo{mild} assumptions about $\rho_0$ and local Lipschitz continuity of $\CE$.
We explain the key differences of this work with respect to previous work in detail in Section~\ref{sec:blueprints} and further \revisedTwo{showcase} the benefits of the proposed analysis by a numerical example.
These findings reveal that the hardness of any global optimization problem is necessarily encoded in the rate of the mean-field approximation as $N\rightarrow\infty$.
Secondly, in consideration of its central significance with regards to the computational complexity of the numerical scheme~\eqref{eq:dyn_micro_discrete} we establish a novel probabilistic quantitative result about the convergence of the interacting particle system~\eqref{eq:dyn_micro} to the corresponding mean-field limit~\eqref{eq:fokker_planck}, which is a result of independent interest.
By combining these two results, the convergence in mean-field law on the one hand, and the quantitative mean-field approximation on the other, we provide the first, and so far unique, holistic convergence proof of CBO on the plane, \revised{enabling to quantify the optimization capability of the numerical CBO algorithm~\eqref{eq:dyn_micro_discrete} in terms of the used parameters.}
The utilized proof technique may be used as a \revisedTwo{blueprint} for proving global convergence for other recent adaptations of the CBO dynamics, see, e.g., \cite{carrillo2019consensus,fornasier2020consensus_sphere_convergence,fornasier2021anisotropic,fornasier2021convergence,borghi2021constrained,kalise2022consensus}, as well as other metaheuristics such as the renowned Particle Swarm Optimization, \revised{which is related to CBO through a zero-inertia limit, see, e.g., \cite{grassi2020particle,cipriani2021zero,qiu2022PSOconvergence}.}
While the present paper has foundational and theoretical nature and aims at completely clarifying the convergence of the numerical scheme \eqref{eq:dyn_micro_discrete} with a detailed analysis, we do not include extensive numerical experiments.
\revised{For numerical evidence that CBO does solve difficult optimizations also in high dimensions without necessarily incurring in the curse of dimensionality, the reader may want to consult previous work such as \cite{carrillo2019consensus,chen2020consensus,fornasier2020consensus_sphere_convergence,fornasier2021anisotropic,fornasier2021convergence,riedl2022leveraging,trillos2023FedCBO}.}

\revised{\begin{remark}
    Employing stochasticity and leveraging collaboration between multiple agents to empirically and provably achieve global convergence of numerical algorithms and to avoid convergence to local minima, is not just of particular relevance when it comes to the efficiency and success of zero-order methods, but also an emerging paradigm in the field of gradient-based optimization, see, e.g., \cite{dong2021replica,lu2022swarm,chizat2022meanfield}.
    Recent work~\cite{riedl2023all,riedl2023all2} even suggests
    a connection between the worlds of derivative-free and gradient-based methods.
    Similar guiding principles are present also in sampling methods, such as Langevin sampling~\cite{chiang1987diffusion,roberts1996exponential,durmus2017nonasymptotic,chizat2022meanfield} or Stein Variational Gradient Descent \cite{liu2016stein}, which are designed to generate samples from an unknown target distribution.

    A promising way to gain a theoretical understanding of the behavior of these classes of algorithms is by taking a mean-field perspective, i.e., by analyzing the dynamics, as the number of particles goes to infinity, through an associated PDE.
    This typically involves Polyak-{\L}ojasiewicz-like conditions~\cite{karimi2016linear} or certain families of log-Sobolev inequalities~\cite{chizat2022meanfield} on the objective function~$\CE$, which are more restrictive than the assumptions under which the statements of this work hold.
    For a recent analysis of the mean-field Langevin dynamics we refer to~\cite{chizat2022meanfield} and references therein. 

    Lately and conceptually similar to the convexification of a highly nonconvex problem observed in this work, taking a mean-field perspective  has allowed the authors of \cite{mei2018mean,rotskoff2018trainability,chizat2018global,sirignano2020mean} to explain the generalization capabilities of over-parameterized neural networks.
    By leveraging that the mean-field description (w.r.t.\@ the number of neurons) of the SGD learning dynamics is captured by a nonlinear PDE, which admits a gradient flow structure on~$\left(\CP_2(\bbR^d), W_2\right)$, these works show that original complexities of the loss landscape are alleviated.
    Together with a quantification of the fluctuations of the empirical neuron distribution around this mean-field limit (i.e., a mean-field approximation), convergence results are derived for SGD for sufficiently large networks with optimal generalization error.
    These results, however, are structurally different from the ones obtained in this paper for CBO.
    In particular, the individual particles in \cite{mei2018mean,rotskoff2018trainability,chizat2018global,sirignano2020mean} are associated with the different neurons of a two-layer \revisedTwo{or deep} neural network and the objective function is a specific empirical risk,
    \revisedTwo{which itself is subject to the mean-field limit and gains convexity as the number of neurons tends to infinity.
    In contrast,} in our setting each particle itself is a competitor for minimization of a general \revisedTwo{fixed nonconvex} objective function $\CE$ \revisedTwo{and the convexification of the problem emerges from the CBO dynamics when its mean-field limit behavior is studied.}
    For this reason, the two resulting mean-field limits are different.
    
    Let us further point out that, as far as the community could understand up to now, the Fokker-Planck equation \eqref{eq:fokker_planck} describing the mean-field behavior of CBO cannot be understood as a gradient flow of any energy on $\left(\CP_2(\bbR^d), W_2\right)$.
    Yet, and perhaps surprisingly, the analysis of our present paper shows that the Wasserstein-$2$ distance from the global minimizer is the correct Lyapunov functional to be analyzed.
\end{remark}}

\subsection{Organization} \label{subsec:organization}
In Section~\ref{sec:blueprints} we first discuss state-of-the-art global convergence results for CBO methods with a detailed account of the utilized proof technique, including potential weaknesses.
The second part of Section~\ref{sec:blueprints} then motivates an alternative proof strategy and explains how it can remedy the weaknesses of prior proofs under minimalistic assumptions.
In Section~\ref{sec:cbo_envelope_min} we first provide additional details about the well-posedness of the macroscopic SDE~\eqref{eq:dyn_macro}, respectively, the Fokker-Planck equation \eqref{eq:fokker_planck}, before presenting and discussing the main result about the convergence of the dynamics~\eqref{eq:dyn_macro} and~\eqref{eq:fokker_planck} to the global minimizer in mean-field law.
In order to demonstrate the relevance of such statement in establishing a holistic convergence guarantee for the numerical scheme~\eqref{eq:dyn_micro_discrete}, we conclude the section with a probabilistic quantitative result about the mean-field approximation.
Sections~\ref{sec:proof_main_theorem} and~\ref{sec:proof_probabilistic_MFA} comprise the proof details of the convergence result in mean-field law and the result about the mean-field approximation, respectively.
Section~\ref{sec:conclusions} concludes the paper.

For the sake of reproducible research, in the GitHub repository \url{https://github.com/KonstantinRiedl/CBOGlobalConvergenceAnalysis} we provide the Matlab code implementing the CBO algorithm analyzed in this work and used to create all visualizations.
Python and Julia code for CBO~\cite{Bailo2024} can be also found in the GitHub repositories \url{https://github.com/PdIPS/CBXpy} and \url{https://github.com/PdIPS/ConsensusBasedX.jl}.

\subsection{Notation}
\label{subsec:notation}
Euclidean balls are denoted as \mbox{$B_{r}(u) := \{v \in \bbR^d\!:\! \Nnormal{v-u}_2 \leq r\}$}.
For the space of continuous functions~$f:X\rightarrow Y$ we write $\CC(X,Y)$, with $X\subset\bbR^n$ and a suitable topological space $Y$.
For an open set $X\subset\bbR^n$ and for $Y=\bbR^m$ the spaces~$\CC^k_{c}(X,Y)$ and~$\CC^k_{b}(X,Y)$ contain functions~$f\in\CC(X,Y)$ that are $k$-times continuously differentiable and have compact support or are bounded, respectively.
We omit $Y$ in the real-valued case.
The operators $\nabla$ and $\Delta$ denote the gradient and Laplace operator of a function on~$\bbR^d$.
The main objects of study are laws of stochastic processes, $\rho\in\CC([0,T],\CP(\bbR^d))$, where the set $\CP(\bbR^d)$ contains all Borel probability measures over $\bbR^d$.
With $\rho_t\in\CP(\bbR^d)$ we refer to a snapshot of such law at time~$t$.
In case we refer to some fixed distribution, we write~$\indivmeasure$.
Measures~$\indivmeasure \in \CP(\bbR^d)$ with finite $p$-th moment $\int \Nnormal{v}_2^p\,d\indivmeasure(v)$ are collected in $\CP_p(\bbR^d)$.
For any $1\leq p<\infty$, $W_p$ denotes the \mbox{Wasserstein-$p$} distance between two Borel probability measures~$\indivmeasure_1,\indivmeasure_2\in\CP_p(\bbR^d)$, see, e.g., \cite{savare2008gradientflows}.
$\bbE(\indivmeasure)$ denotes the expectation of a probability measure $\indivmeasure$.

\section{Blueprints for the analysis of CBO methods} \label{sec:blueprints}
In this section we provide intuitive descriptions of two approaches to the analysis of the convergence of CBO methods to global minimizers.
We first recall \cite{carrillo2018analytical}, and related works \cite{ha2020convergenceHD,ha2021convergence}, which prove convergence as a consequence of a monotonous decay of the variance of $\rho_t$ and by employing the asymptotic Laplace principle~\eqref{eq:laplace_principle}.
This proof strategy incurs a restrictive condition about the parameters $\alpha,\lambda,\sigma$ and the initial configuration $\rho_0$, which implies that a small optimization gap $\CE(\widetilde v) - \CE(\globmin)$ can only be achieved for initial configurations~$\rho_0$ already well-concentrated near the optimizer $\globmin$.
We then motivate an alternative proof idea to remedy this weakness based on the intuition that $\rho_t$ monotonically minimizes the squared Euclidean distance to the global minimizer~$\globmin$.

\subsection{State of the art: variance-based convergence analysis} \label{subsec:variance_based_analysis}
We now recall the blueprint proof strategy from \cite{carrillo2018analytical}, which has been adapted in other works, e.g.,  \cite{ha2020convergenceHD,ha2021convergence,fornasier2020consensus_sphere_convergence}, to prove consensus formation and convergence to the global minimum.

A successful application of the CBO framework underlies the premise that the induced particle density $\rho_t$ converges to a Dirac delta $\delta_{\widetilde v}$ for some $\widetilde v$ close to $\globmin$.
The analysis in \cite{carrillo2018analytical} proves this under certain assumptions by first showing that $\rho_t$ converges to a Dirac delta around \emph{some} $\widetilde v \in \bbR^d$ and then concluding $\widetilde v \approx \globmin$ in a subsequent step.
Regarding the first step, the authors of \cite{carrillo2018analytical} study the variance of $\rho_t$, defined as
$\Var{\rho_t} := \frac{1}{2}\int\N{v-\bbE(\rho_t)}_2^2d\rho_t(v)$,
where $\bbE(\rho_t) := \int v\,d\rho_t(v)$,
and show that $\Var{\rho_t}$ decays exponentially fast in $t$ under a well-preparedness assumption about the initial condition $\rho_0$.
More precisely, in \cite[Section~4.1]{carrillo2018analytical} the authors use It\^o's lemma to derive for the time-evolution of $\Var{\rho_t}$ the expression
\begin{align}
	\frac{d}{dt}\Var{\rho_t}
	&= -\left(2\lambda-d\sigma^2\right) \Var{\rho_t} + \frac{d\sigma^2}{2}\N{\bbE(\rho_t) - \conspoint{\rho_t}}^2_2. \label{eq:variance_evolution_2}
\end{align}
For parameter choices $2\lambda > d\sigma^2$, the first term in \eqref{eq:variance_evolution_2} is negative and one could \emph{almost} apply Gr\"onwall's inequality to obtain the asserted exponential decay of $\Var{\rho_t}$.
However, the second term can be problematic and the main difficulty is to control the distance $\Nnormal{\bbE(\rho_t) - \conspoint{\rho_t}}_2$ between the mean and the weighted mean. For \mbox{$\alpha\rightarrow 0$} the weight function $\omegaa(v) = \exp(-\alpha \CE(v))$ associated with $\conspoint{\rho_t}$ converges to $1$ pointwise and consequently $\conspoint{\rho_t} \rightarrow \bbE(\rho_t)$.
However, the second proof step, explained below, reveals that the crucial regime is $\alpha\gg 1$.
In this case $\conspoint{\rho_t}$ can be arbitrarily far from $\bbE(\rho_t)$ if we do not dispose of additional knowledge about the probability measure $\rho_t$.
To restrict the set of probability measures $\rho_t$ that need to be considered when bounding $\Nnormal{\bbE(\rho_t) - \conspoint{\rho_t}}_2$, the authors of \cite{carrillo2018analytical} compromise to assume that the initial distribution $\rho_0$ satisfies the well-preparedness assumptions
\begin{equation} \label{eq:carrillo_conditions}
	\alpha e^{-2\alpha \underbarscript{\CE}}(\sigma^2\!+\!2\lambda) \!<\! 3/8 \ \, \text{ and } \ \, 2\lambda\! \N{\omegaa}^{2}_{L_1(\rho_0)} \!-\! \Var{\rho_0} \!-\! 2d\sigma^2\N{\omegaa}_{L_1(\rho_{0})}e^{-\alpha \underbarscript{\CE}} \!\geq\! 0.
\end{equation}
Since $\rho_t$ evolves from $\rho_0$ according to the Fokker-Planck equation~\eqref{eq:fokker_planck}, these conditions restrict $\rho_t$ and allow for bounding $\Nnormal{\bbE(\rho_t) - \conspoint{\rho_t}}_2$ by a suitable multiple of $\Var{\rho_t}$.
The exponential decay of $\Var{\rho_t}$ then follows from \eqref{eq:variance_evolution_2} after applying Gr\"onwall's inequality, see~\cite[Theorem~4.1]{carrillo2018analytical}.
Furthermore, the conditions in \eqref{eq:carrillo_conditions} also allow for proving convergence of $\rho_t$ to a stationary Dirac delta at $\widetilde v \in \bbR^d$.

Given convergence to a Dirac at $\widetilde v$, in a second step it is shown $\CE(\widetilde v) \approx \CE(\globmin)$.
In order to prove this approximation, one first deduces that for any $\varepsilon > 0$, there exists $\alpha \gg 1$ such that for all $t\geq 0$ it holds
$-\frac{1}{\alpha}\log\!\left(\Nnormal{\omegaa}_{L_1(\rho_t)}\right) \leq
	-\frac{1}{\alpha}\log\!\left(\Nnormal{\omegaa}_{L_1(\rho_0)}\right)+\frac{\varepsilon}{2}$.
This involves deriving a lower bound for the evolution $\frac{d}{dt} \Nnormal{\omegaa}_{L_1(\rho_t)}$ for sufficiently large $\alpha > 0$ as done in~\cite[Lemma~4.1]{carrillo2018analytical}, which is then combined with the formerly proven exponentially decaying variance, see~\cite[Proof of Theorem~4.2]{carrillo2018analytical}.
Then, by recognizing that the Laplace principle~\eqref{eq:laplace_principle} implies the existence of some $\alpha \gg 1$ with
\begin{align} \label{eq:aux_laplace_principle}
	-\frac{1}{\alpha}\log\!\left(\Nnormal{\omegaa}_{L_1(\rho_0)}\right) - \minobj < \frac{\varepsilon}{2},
\end{align}
and by establishing the convergence $\Nnormal{\omegaa}_{L_1(\rho_t)}\rightarrow \exp(-\alpha\CE(\widetilde v))$ as $t\rightarrow \infty$, one obtains the desired result $\CE(\widetilde v) - \minobj < \varepsilon$ in the limit $t\rightarrow \infty$, see~\cite[Lemma~4.2]{carrillo2018analytical}.
The gap $\CE(\widetilde v) - \minobj$ can be tightened by increasing $\alpha$, but it is impossible to establish an explicit relation $\alpha = \alpha(\varepsilon)$ due to the use of the asymptotic Laplace principle. 

This proof sketch unveils a tension on the role of the parameter $\alpha$.
Namely, the second step requires large $\alpha = \alpha(\varepsilon)$ to achieve $\CE(\widetilde v) -\minobj< \varepsilon$.
In fact, $\alpha(\varepsilon)$ may grow uncontrollably as we decrease the accuracy $\varepsilon$.
The first step, however, requires the conditions in~\eqref{eq:carrillo_conditions} which, in the most optimistic case, where $\sigma = 0$, imply
\begin{align}
\label{eq:localization_carrillo}
\Var{\rho_0} \leq \frac{3}{8\alpha} \left(\int \exp\big(-\alpha(\CE(v)-\minobj)\big)\,d\rho_0(v)\right)^2.
\end{align}
Therefore, $\rho_0$ needs to be increasingly concentrated as $\alpha$ increases, and should ideally be supported
on sets where $\CE(v) \approx \minobj$. Designing such distribution $\rho_0$ in
practice seems impossible in the absence of a good initial guess for $\globmin$. In particular, we cannot expect \eqref{eq:localization_carrillo} to hold for
generic choices such as a uniform distribution on a compact set.

We add that the works \cite{ha2020convergenceHD,ha2021convergence} conduct a similarly flavored analysis for the fully time-discretized microscopic system~\eqref{eq:dyn_micro_discrete}, with some differences in the details.
They first show an exponentially decaying variance under mild assumptions about $\lambda$ and $\sigma$, but provided that the same Brownian motion is used for all agents, i.e., $(B_{k\Delta t}^i)_{k=1,\dots,K} = (B_{k\Delta t})_{k=1,\dots,K}$ for all $i = 1,\dots,N$.
Such a choice leads to a less  explorative dynamics, but it simplifies the consensus formation analysis.
For proving $\CE(\widetilde v) \approx \minobj$ however, the authors again require an initial configuration $\rho_0$ that satisfies a technical concentration condition like \eqref{eq:aux_laplace_principle}, see for instance \cite[Remark~3.1]{ha2021convergence}.

\subsection{Alternative approach: CBO minimizes the squared distance to~$\globmin$} \label{subsec:convex_envelope_based_approach}
The approach described in the previous section might suggest that CBO only converges locally, which is in fact not what is observed in practice.
Instead, global optimization is actually expected.
To remedy the locality requirements of the variance-based analysis, let us now sketch and motivate an alternative proof idea.
By averaging out the randomness associated with different realizations of Brownian motion paths, the macroscopic time-continuous SDE~\eqref{eq:dyn_macro}, in the case $H\equiv 1$, becomes
\begin{align} \label{eq:EVt}
	\frac{d}{dt}\bbE\!\left[\overbar V_t\big|\overbar V_0\right]
	&= -\lambda\bbE\!\left[\left(\overbar V_t - \conspoint{\rho_t}\right)\!\big|\overbar V_0\right]
	= -\lambda\bbE\!\left[\left(\overbar V_t - \globmin\right)\!\big|\overbar V_0\right] + \lambda\left(\conspoint{\rho_t} - \globmin\right)\!.
\end{align}
Furthermore, if $\CE$ is locally Lipschitz continuous and satisfies the coercivity condition
\begin{align} \label{eq:ICP_outside_def}
	\N{v-\globmin}_2 \leq \frac{1}{\eta}\big(\CE(v) - \CE(v^*)\big)^{\nu} = \frac{1}{\eta}\big(\CE(v) - \minobj\big)^{\nu}, \quad \text{for all } v \in \bbR^d,
\end{align}
and for some $\eta > 0$ and $\nu \in (0,\infty)$, the second term on the right-hand side of \eqref{eq:EVt} can be made arbitrarily small for sufficiently large $\alpha$, i.e., $\conspoint{\rho_t} \approx v^*$
(more details follow below).
In this case, the average dynamics of $\overbar{V}_t$ is well-approximated by
\begin{align} \label{eq:EVt_approx}
	\frac{d}{dt}\bbE\left[\overbar V_t|\overbar V_0\right]
	\approx -\lambda\bbE\left[\left(\overbar V_t - \globmin\right)\!|\overbar V_0\right],
\end{align}
which corresponds to the gradient flow of $v\mapsto \Nnormal{v-\globmin}_2^2$ with rate $2\lambda$.
In other words, each individual agent essentially performs a gradient-descent of $v\mapsto \Nnormal{v-\globmin}_2^2$ on average over all realizations of Brownian motion paths.
Figure~\ref{fig:intuition_ce} visualizes this phenomenon for three isolated agents on the Rastrigin function in two dimensions.

Inspired by this observation, our proof strategy is to show that CBO methods successively minimize the energy functional~$\CV:\CP(\bbR^d)\rightarrow \bbR_{\geq 0}$, given by
\begin{align} \label{def:J}
	\CV(\rho_t) := \frac{1}{2}\int\N{v-\globmin}_2^2d\rho_t(v).
\end{align}
Note that this functional essentially coincides with the Wasserstein distance in the sense that $	W_2^2(\rho_t,\delta_{\globmin}) =  2\CV(\rho_t)$.
Therefore $\CV(\rho_t) \rightarrow 0$ in particular implies that $\rho_t$ converges weakly to $\delta_{\globmin}$, see~\cite[Chapter~7]{savare2008gradientflows}.

This novel approach does not suffer a tension on the parameter $\alpha$ like the variance-based analysis from the previous section.
Roughly speaking (see Lemma~\ref{lem:evolution_of_objective} for details), $\CV(\rho_t)$ follows an evolution similar to  \eqref{eq:variance_evolution_2}, with $\Var{\rho_t}$ being replaced by $\CV(\rho_t)$.
However, we can now bound $\int \Nnormal{v-\conspoint{\rho_t}}_2^2\,d\rho_t(v) \leq 4\CV(\rho_t) + 2 \Nnormal{\conspoint{\rho_t}-\globmin}_2^2$, so that it just remains to control the second term.
In comparison to bounding $\Nnormal{\conspoint{\rho_t}-\bbE(\rho_t)}_2^2$ in terms of $\Var{\rho_t}$ for the variance-based analysis, this requires to bound $\Nnormal{\conspoint{\rho_t}-\globmin}_2^2$ in terms of $\CV(\rho_t)$.
Fortunately, this is a much easier task: the Laplace principle generally asserts $\Nnormal{\conspoint{\rho_t}-\globmin}_2 \rightarrow 0$ under \eqref{eq:ICP_outside_def} as $\alpha \rightarrow \infty$ and we can even establish (see Proposition~\ref{lem:laplace_alt} for details) the quantitative estimate
\begin{align*}
	\N{\conspoint{\indivmeasure} - \globmin}_2
	\leq \frac{(2Lr)^{\nu}}{\eta}+ \frac{\exp\left(-\alpha L r\right)}{\indivmeasure(B_{r}(\globmin))}\int\N{v-\globmin}_2d\indivmeasure(v)
\end{align*}
for an arbitrary probability measure~$\indivmeasure$ and assuming that $\CE$ is $L$-Lipschitz in a ball of radius~$r>0$.
This allows to estimate $\Nnormal{\conspoint{\rho_t} - \globmin}_2^2$ in terms of $\CV(\rho_t)$ as desired.

Finally, we note that $\CV(\rho_t)$ majorizes $\Var{\rho_t}$ because $u\mapsto \frac{1}{2}\!\int \Nnormal{v-u}_2^2\,d\rho_t(v)$ is minimized by the expectation $\bbE(\rho_t)$.
This relation may be a source of concern, as it shows that proving $\CV(\rho_t)\rightarrow 0$ implies $\Var{\rho_t}\rightarrow 0$.
We emphasize however that this does not imply a majorization for the corresponding time derivatives.
In fact, Example~\ref{ex:numerical_example} suggests that $\CV(\rho_t)$ can decay exponentially while $\Var{\rho_t}$ increases~initially.

\begin{example} \label{ex:numerical_example}
	We consider the Rastrigin function~$\CE(v)=v^2+2.5(1-\cos(2\pi v))$ with global minimum at~$\globmin = 0$ and various local minima, see Figure~\ref{fig:objective1d}.
	For different initial configurations $\rho_0 = \CN(\mu,0.8)$ with $\mu\in\{1,2,3,4\}$, we evolve the discretized system~\eqref{eq:dyn_micro_discrete} using $N = 320000$ agents, discrete time step size~$\Delta t = 0.01$ and parameters $\alpha = 10^{15}$ (i.e., the consensus point is the $\argmin$ of the agents), $\lambda = 1$ and $\sigma = 0.5$.
	By considering different means from $\mu = 1$ to $\mu = 4$, we push the global minimizer~$\globmin$ into the tails of the initial configuration~$\rho_0$.
	Figure~\ref{fig:comparison} shows that the decreasing initial probability mass around  $\globmin$ eventually causes the variance $\mathrm{Var}(\empmeasure{t})$ (dashed lines) to increase in the beginning of the dynamics.
	In contrast, $\CV(\empmeasure{t})$ always decays exponentially fast with convergence speed~$(2\lambda-d\sigma^2)$, independently of the initial condition~$\rho_0$.
	From a theoretical perspective, this means proving global convergence using a variance-based analysis as in Section~\ref{subsec:variance_based_analysis} must require assumptions about $\rho_0$ such as Condition~\eqref{eq:localization_carrillo}, whereas using $\CV(\rho_t)$ does not suffer from this issue.
	The convergence speed $(2\lambda-d\sigma^2)$ coincides with the result in Theorem~\ref{thm:global_convergence_main}.
	\begin{figure}[!ht]
		\centering
		\subcaptionbox{The Rastrigin function and the map $v\mapsto\N{v-\globmin}_2^2$\label{fig:objective1d}}{\includegraphics[height=5.5cm, width=0.344\textwidth, trim=120 228 124 244,clip]{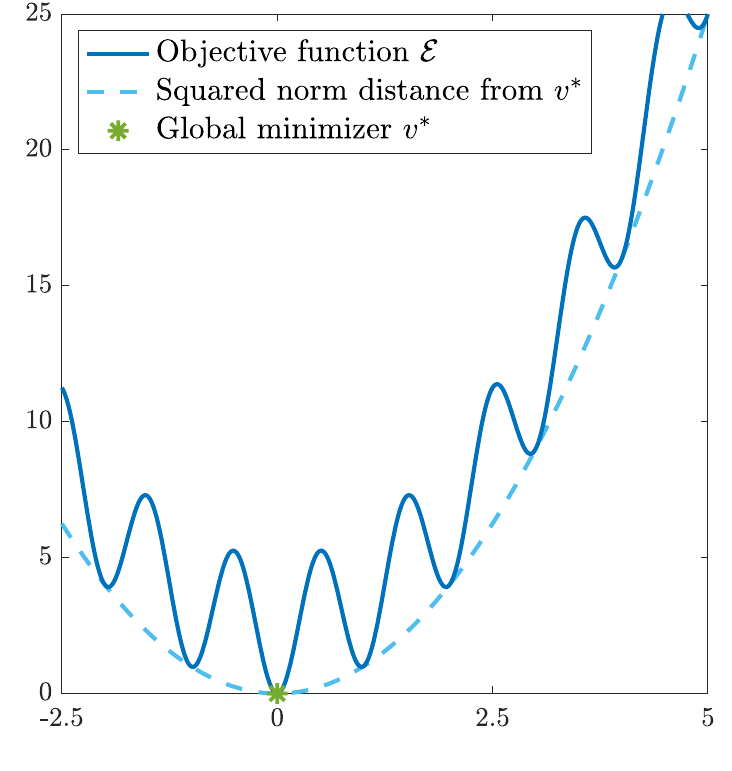}}
		\hspace{3em}
		\subcaptionbox{Evolution of the variance~$\mathrm{Var}(\empmeasure{t})$ and $\CV(\empmeasure{t})$ for different initial conditions~$\rho_0$\label{fig:comparison}}{\includegraphics[height=5.5cm, width=0.516\textwidth, trim=50 228 40 244,clip]{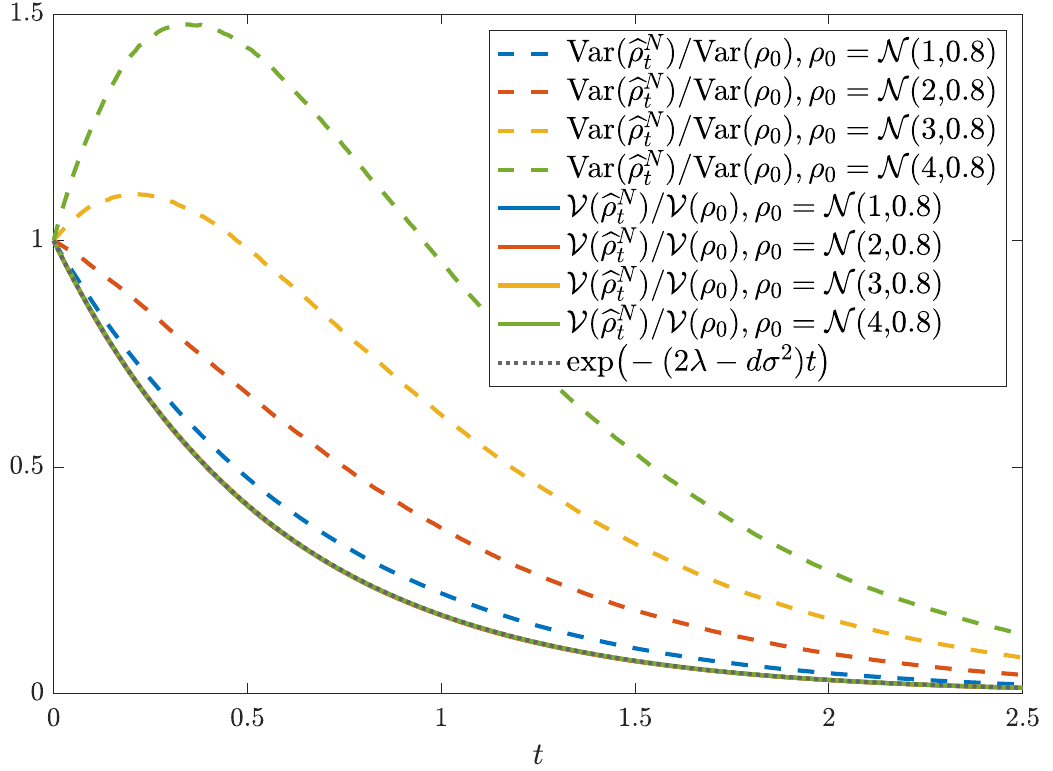}}
		\caption{ \textbf{(a)}~The Rastrigin function as objective function~$\CE$ and the squared Euclidean distance from $v^*$.
		\textbf{(b)}~The evolution of the variance~$\mathrm{Var}(\empmeasure{t})$ and the functional~$\CV(\empmeasure{t})$ for different initial conditions~$\rho_0 = \CN(\mu, 0.8)$ with $\mu\in\{1,2,3,4\}$.
		The measure~$\empmeasure{t}$ is the empirical agent density that is evolved using~\eqref{eq:dyn_micro_discrete} with $N = 320000$ agents, discrete time step size~$\Delta t=0.01$ and parameters~$\alpha = 10^{15}$, $\lambda = 1$ and $\sigma = 0.5$.
		As we move the mean of the initial configuration~$\rho_0$ away from the global optimizer $\globmin = 0$, and thereby push $v^*$ into the tails of~$\rho_0$, $\mathrm{Var}(\empmeasure{t})$ increases in the starting phase of the dynamics. $\CV(\empmeasure{t})$ on the other hand always decreases exponentially at a rate $(2\lambda-d\sigma^2)$, independently of the initial condition~$\rho_0$.}
	\label{figure:VarianceVSJ}
	\end{figure}
\end{example}

\section{Global convergence of consensus-based optimization} \label{sec:cbo_envelope_min}
In the first part of this section we recite and extend well-posedness results about the nonlinear macroscopic SDE~\eqref{eq:dyn_macro}, respectively, the associated Fokker-Planck equation~\eqref{eq:fokker_planck}.
At the beginning of the second part we introduce the class of studied objective functions, which is followed by the presentation of the main result about the convergence of the dynamics~\eqref{eq:dyn_macro} and~\eqref{eq:fokker_planck} to the global minimizer in mean-field law.
In the final part we then highlight the relevance of this result by presenting a holistic convergence proof of the numerical scheme~\eqref{eq:dyn_micro_discrete} to the global minimizer.
This combines the latter statement with a probabilistic quantitative  result about the mean-field approximation.

\subsection{Definition of weak solutions and well-posedness} \label{subsec:well_posedness}
We begin by rigorously defining weak solutions of the Fokker-Planck equation~\eqref{eq:fokker_planck}.

\begin{definition} \label{def:fokker_planck_weak_sense}
	Let $\rho_0 \in \CP(\bbR^d)$, $T > 0$.
	We say $\rho\in\CC([0,T],\CP(\bbR^d))$ satisfies the Fokker-Planck equation~\eqref{eq:fokker_planck} with initial condition $\rho_0$ in the weak sense in the time interval $[0,T]$, if we have for all $\phi \in \CC_c^{\infty}(\bbR^d)$ and all $t \in (0,T)$
	\begin{align} \label{eq:weak_solution_identity}
	\begin{split}
		\frac{d}{dt}\int \phi(v) \,d\rho_t(v) =
		&- \lambda\int \cutoff{\CE(v)-\CE(\conspoint{\rho_t})} \left\langle v - \conspoint{\rho_t}, \nabla \phi(v) \right \rangle d\rho_t(v)\\
		&+ \frac{\sigma^2}{2} \int \N{v-\conspoint{\rho_t}}_2^2 \Delta \phi(v) \,d\rho_t(v)
	\end{split}
	\end{align}
	and $\lim_{t\rightarrow 0}\rho_t = \rho_0$ pointwise.
\end{definition}

If the cutoff function $\cutoffnoarg$ in the dynamics~\eqref{eq:dyn_macro} is inactive, i.e., satisfies $\cutoffnoarg \equiv 1$, the authors of \cite{carrillo2018analytical} prove the following well-posedness result.

\begin{theorem}[{\cite[Theorems~3.1, 3.2]{carrillo2018analytical}}] \label{thm:well-posedness_FP}
	Let $T > 0$, $\rho_0 \in \CP_4(\bbR^d)$.
	Let $\cutoffnoarg \equiv 1$ and consider $\CE : \bbR^d\rightarrow \bbR$ with $\minobj > -\infty$, which, for constants $C_1,C_2 > 0$, satisfies
	\begin{align}
		\SN{\CE(v)-\CE(w)}
		&\leq C_1(\N{v}_2 + \N{w}_2)\N{v-w}_2,
		\quad \text{for all } v,w \in \bbR^d,
		\label{eq:lipschitz_condition_car}\\
		\CE(v) - \minobj
		&\leq C_2 (1+\N{v}_2^2),
		\quad \text{for all } v \in \bbR^d.
		\label{eq:quadratic_boundedness_condition_car}
	\end{align}
	If in addition, either $\sup_{v \in \bbR^d}\CE(v) < \infty$,
	or $\CE$ satisfies for some constants $C_3,C_4 > 0$
	\begin{align} \label{eq:quadratic_growth_condition_car}
		\CE(v) - \minobj
		\geq C_3\N{v}_2^2,
		\quad \text{for all } \N{v}_2 \geq C_4,
	\end{align}
	then there exists a unique nonlinear process $\overbar V \in \CC([0,T],\bbR^d)$ satisfying \eqref{eq:dyn_macro} \revisedTwo{in the strong sense.}
	The associated law $\rho = \Law(\overbar V)$ has regularity $\rho \in \CC([0,T], \CP_4(\bbR^d))$ and is a weak solution to the Fokker-Planck equation~\eqref{eq:fokker_planck}.
\end{theorem}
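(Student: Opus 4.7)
The plan is to treat \eqref{eq:dyn_macro} as a McKean--Vlasov SDE and obtain existence and uniqueness via a fixed-point argument on the space of measure-valued paths $\CY := \CC([0,T],\CP_4(\bbR^d))$. The key structural observation is that the coefficients of \eqref{eq:dyn_macro} are affine in the state variable $v$ and depend on the law only through the functional $\indivmeasure \mapsto \conspoint{\indivmeasure}$, which decouples the nonlinearity inherent to the problem.

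First, for a fixed $u \in \CY$ with uniformly bounded fourth moment, I would solve the decoupled linear SDE
\begin{equation*}
dV_t = -\lambda\LRP{V_t - \conspoint{u_t}}dt + \sigma\N{V_t - \conspoint{u_t}}_2 dB_t, \qquad V_0 \sim \rho_0,
\end{equation*}
whose coefficients are globally Lipschitz and of linear growth in $v$ (with constants depending on $\N{\conspoint{u_t}}_2$), and whose time dependence is continuous provided $u$ is continuous and $\conspointnoarg$ is weakly continuous on the moment-controlled subset. Classical SDE theory then yields a unique strong solution $V \in \CC([0,T],\bbR^d)$. Applying It\^o's formula to $\N{V_t}_2^4$, together with Young's inequality and Gr\"onwall's lemma, produces a uniform fourth-moment bound $\sup_{t\in[0,T]}\bbE\N{V_t}_2^4 \leq M$ depending on $T,\lambda,\sigma$, $\bbE\N{V_0}_2^4$, and $\sup_t \N{\conspoint{u_t}}_2$. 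This allows me to restrict to a convex subset of $\CY$ on which moments are a priori controlled and which is mapped into itself by $\CT : u \mapsto \Law(V)$.

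Existence of a fixed point follows either by (i) Leray--Schauder, verifying that $\CT$ is continuous and sends a convex compact subset of $\CY$ into itself (compactness via moment-based tightness and equicontinuity from the SDE), or by (ii) Banach contraction on $\CC([0,T],\CP_2(\bbR^d))$ with the Wasserstein-$2$ metric for short time, then extending by iteration over $[0,T]$. Either route hinges on a stability estimate for $\conspointnoarg$ of the form $\N{\conspoint{\indivmeasure_1} - \conspoint{\indivmeasure_2}}_2 \leq C\, W_2(\indivmeasure_1,\indivmeasure_2)$ on measures with uniformly bounded fourth moments. This is the main obstacle: one must (a) bound the denominator $\N{\omegaa}_{L^1(\indivmeasure)} = \int \exp(-\alpha\CE(v))\,d\indivmeasure(v)$ uniformly from below, which is where the assumption $\CE(v) - \minobj \leq C_2(1+\N{v}_2^2)$ is crucial (together with $\indivmeasure \in \CP_4$ it guarantees a nondegenerate fraction of mass in a region where $\omegaa$ is not too small), and (b) Lipschitz-control the numerator $\int v\,\omegaa(v)\,d\indivmeasure(v)$ via the local-Lipschitz condition \eqref{eq:lipschitz_condition_car} using a coupling of $\indivmeasure_1$ and $\indivmeasure_2$, where either boundedness of $\CE$ from above or the coercive growth \eqref{eq:quadratic_growth_condition_car} prevents the tails from spoiling the estimate.

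Once a fixed-point law $\rho$ is obtained, I would verify the weak Fokker--Planck identity \eqref{eq:weak_solution_identity} by applying It\^o's formula to $\phi(\overbar V_t)$ for $\phi \in \CC_c^\infty(\bbR^d)$ and taking expectations, the martingale term vanishing owing to the fourth-moment bound on $\overbar V$ together with the linear growth of the diffusion coefficient. Continuity $t \mapsto \rho_t$ in $\CP_4$ follows from weak continuity of the trajectories combined with uniform-in-$t$ fourth-moment control, while uniqueness is inherited either from the contraction in (ii), or in the Leray--Schauder route by a separate Gr\"onwall comparison of two putative solutions exploiting the Wasserstein-stability of $\conspointnoarg$.
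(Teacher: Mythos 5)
Your outline is sound and reaches the same conclusion with the same essential ingredients, but via a genuinely different fixed-point space. The paper (following \cite[Theorems~3.1, 3.2]{carrillo2018analytical}, and mirrored in the proof sketch of Theorem~\ref{thm:well-posedness_FP_cutoff}) does \emph{not} iterate on measure-valued paths: it freezes a finite-dimensional curve $u\in\CC([0,T],\bbR^d)$ playing the role of the consensus trajectory, solves the linear SDE driven by $u$, and then closes the loop through the map $\CT u=\conspoint{\widetilde\rho}$ on $\CC([0,T],\bbR^d)$, applying Leray--Schauder there; compactness is cheap because one only needs equicontinuity of a $\bbR^d$-valued curve, obtained from the stability estimate $\Nnormal{\conspoint{\widetilde\rho_t}-\conspoint{\widetilde\rho_s}}_2\lesssim W_2(\widetilde\rho_t,\widetilde\rho_s)$ of \cite[Lemma~3.2]{carrillo2018analytical} combined with H\"older-$1/2$ continuity of $t\mapsto\widetilde\rho_t$ in $W_2$. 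Your route is the standard McKean--Vlasov/Sznitman one, iterating on $\CC([0,T],\CP_2(\bbR^d))$ (with a fourth-moment-controlled invariant subset), and it works because the same key lemma — $W_2$-Lipschitz stability of $\indivmeasure\mapsto\conspoint{\indivmeasure}$ on moment-bounded sets, proved exactly as you indicate from the Jensen lower bound on $\Nnormal{\omegaa}_{L^1(\indivmeasure)}$ via \eqref{eq:quadratic_boundedness_condition_car} and from \eqref{eq:lipschitz_condition_car} for the numerator — is available; your contraction variant even gives uniqueness for free, which in the paper's route requires a separate Gr\"onwall comparison. What the paper's reduction buys is avoiding compactness/tightness arguments in Wasserstein path space altogether; what yours buys is a more standard template that transfers directly to other mean-field models. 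One small correction of emphasis: the alternative hypotheses ($\sup\CE<\infty$ or \eqref{eq:quadratic_growth_condition_car}) are not primarily about keeping tails from spoiling the stability estimate; their essential use (via \cite[Lemma~3.3]{carrillo2018analytical}) is the bound $\Nnormal{\conspoint{\indivmeasure}}_2^2\leq b_1+b_2\int\Nnormal{v}_2^2\,d\indivmeasure(v)$, which grows only linearly in the second moment and is what lets your moment estimate close self-consistently (self-mapping of the moment-bounded subset, and the uniform a priori bound on fixed points in the Leray--Schauder step); the crude bound obtainable from \eqref{eq:quadratic_boundedness_condition_car} alone is exponential in the moments and would not close that loop. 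With that point made explicit, your proposal is a correct alternative proof strategy.
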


\begin{remark}
\label{rem:test_functions_redefine}
The regularity $\rho \in \CC([0,T], \CP_4(\bbR^d))$ stated in Theorem~\ref{thm:well-posedness_FP}, and also obtained in Theorem~\ref{thm:well-posedness_FP_cutoff} below,
is a consequence of the regularity of the initial condition $\rho_0 \in \CP_4(\bbR^d)$.
Despite not indicated explicitly in~\cite[Theorems~3.1, 3.2]{carrillo2018analytical}, it follows from their proofs.
In particular, it allows for extending the test function space $\CC^{\infty}_{c}(\bbR^d)$ in Definition~\ref{def:fokker_planck_weak_sense}.
Namely, if $\rho \in \CC([0,T], \CP_4(\bbR^d))$ solves \eqref{eq:fokker_planck} in the weak sense, Identity~\eqref{eq:weak_solution_identity} holds for all $\phi \in \CC^2(\bbR^d)$ with
(i) $\sup_{v\in \bbR^d}|\Delta \phi(v)| < \infty$, and (ii) $\Nnormal{\nabla \phi(v)}_2 \leq C(1+\Nnormal{v}_2)$ \ for some $C > 0$ and for all $v \in \bbR^d$.
We denote the corresponding function space by $\CC^2_*(\bbR^d)$.
\end{remark}

Under minor modifications of the proof for Theorem~\ref{thm:well-posedness_FP},
we can extend the existence of solutions to an active Lipschitz-continuous cutoff function $\cutoffnoarg$.

\begin{theorem}
\label{thm:well-posedness_FP_cutoff}
Let $\cutoffnoarg\not\equiv1$ be $L_H$-Lipschitz continuous. Then, under the assumptions of Theorem~\ref{thm:well-posedness_FP}, there exists a nonlinear process $\overbar V \in \CC([0,T],\bbR^d)$ satisfying \eqref{eq:dyn_macro} \revisedTwo{in the strong sense.} The associated law $\rho = \Law(\overbar V)$ has regularity $\rho \in \CC([0,T], \CP_4(\bbR^d))$ and is a weak solution to the Fokker-Planck equation~\eqref{eq:fokker_planck}.
\end{theorem}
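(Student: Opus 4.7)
The plan is to follow the fixed-point blueprint of \cite[Theorems~3.1, 3.2]{carrillo2018analytical} and show that the only new ingredients are harmless: the cutoff $\cutoffnoarg$ is globally bounded by $1$ and globally Lipschitz with constant $L_H$, so every estimate involving the drift either remains unchanged (thanks to $|\cutoffnoarg|\le1$) or gains a benign Lipschitz factor in $L_H$. Concretely, I would keep the same outer structure (a priori moment bounds $\Rightarrow$ linearized SDE $\Rightarrow$ Schauder fixed point $\Rightarrow$ identification with the Fokker--Planck equation) and merely re-verify each step in the presence of the factor $\cutoff{\CE(v)-\CE(\conspointnoarg)}$.

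First I would derive the a priori second and fourth moment bounds for any candidate solution of~\eqref{eq:dyn_macro}. Applying It\^o's formula to $\N{\overbar V_t}_2^{2}$ and $\N{\overbar V_t}_2^{4}$ and using $0\le \cutoffnoarg\le 1$, the drift contribution is controlled pointwise by $\lambda\N{\overbar V_t-\conspoint{\rho_t}}_2$, exactly as in the case $\cutoffnoarg\equiv1$; the diffusion term is untouched. Combined with the bound $\N{\conspoint{\rho_t}}_2\le C(1+\bbE\N{\overbar V_t}_2^{2})^{1/2}$ that follows from~\eqref{eq:quadratic_boundedness_condition_car} and~\eqref{eq:quadratic_growth_condition_car} (or boundedness of $\CE$) as in~\cite[Lemma~3.3]{carrillo2018analytical}, a Gr\"onwall argument gives $\rho \in \CC([0,T],\CP_4(\bbR^d))$.

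Second, I would set up the linearized problem. Given $u\in\CC([0,T],\bbR^d)$ with $\sup_{t}\N{u_t}_2\le M$ for a suitably chosen radius $M$, consider
\begin{equation*}
	d\wt V_t = -\lambda(\wt V_t-u_t)\cutoff{\CE(\wt V_t)-\CE(u_t)}\,dt + \sigma\N{\wt V_t-u_t}_2\,dB_t,\qquad \wt V_0\sim\rho_0.
\end{equation*}
By~\eqref{eq:lipschitz_condition_car} and the Lipschitz continuity of $\cutoffnoarg$, the drift is locally Lipschitz in $v$ with at most linear growth; the diffusion coefficient is globally Lipschitz. Standard SDE theory then yields a unique strong solution in $\CC([0,T],\bbR^d)$ with moments of order four. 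Define $\CT:u\mapsto(t\mapsto \conspoint{\wt\rho^u_t})$, where $\wt\rho^u_t=\Law(\wt V^u_t)$. Using the explicit softmax form of $\conspointnoarg$ and the fourth moment bound, $\CT$ maps a closed ball of $\CC([0,T],\bbR^d)$ into itself, is H\"older continuous in time (hence relatively compact in the sup norm by Arzel\`a--Ascoli), and is continuous: if $u^n\to u$ uniformly, Gronwall applied to $\bbE\|\wt V^{u^n}_t-\wt V^{u}_t\|_2^{2}$ gives $L^2$-uniform convergence of the processes, whence of the weighted means. Schauder's theorem then furnishes a fixed point $u^*=\conspoint{\wt\rho^{u^*}_\cdot}$, and $\overbar V:=\wt V^{u^*}$ solves~\eqref{eq:dyn_macro}.

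Finally, for the Fokker--Planck identity~\eqref{eq:weak_solution_identity}, I would apply It\^o's formula to $\phi(\overbar V_t)$ for $\phi\in\CC^\infty_c(\bbR^d)$ and take expectations; the boundedness of $\cutoffnoarg$ ensures all terms are integrable, and one reads off~\eqref{eq:weak_solution_identity} by rewriting expectations as integrals against $\rho_t$. The main technical obstacle, as already in the $\cutoffnoarg\equiv 1$ case, is the continuity of $\CT$ in the sup norm: one must control $\bbE\bigl|\cutoff{\CE(\wt V^{u^n}_t)-\CE(u^n_t)}-\cutoff{\CE(\wt V^{u}_t)-\CE(u_t)}\bigr|^{2}$, and here the Lipschitz assumption on $\cutoffnoarg$ together with~\eqref{eq:lipschitz_condition_car} and the fourth-moment bounds yields an estimate of the form $\le C(L_H,C_1,M)\bigl(\bbE\|\wt V^{u^n}_t-\wt V^u_t\|_2^{2}+\|u^n-u\|_\infty^{2}\bigr)$, which closes the Gronwall loop. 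Uniqueness is not claimed in the statement, consistent with the fact that a Lipschitz $\cutoffnoarg$ (as opposed to $\cutoffnoarg\equiv1$) need not retain the refined pathwise uniqueness argument of \cite[Theorem~3.2]{carrillo2018analytical}.
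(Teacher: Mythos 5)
Your proposal follows the same blueprint as the paper: freeze the consensus point as a given path $u$, solve the resulting linear SDE, show the map $\CT:u\mapsto\conspoint{\widetilde\rho^u}$ is compact, and apply a fixed-point theorem, with the two observations that $0\le\cutoffnoarg\le1$ leaves the moment estimates intact and that Lipschitz continuity of $\cutoffnoarg$ together with \eqref{eq:lipschitz_condition_car} preserves local Lipschitz continuity and linear growth of the drift. The Fokker--Planck identification via It\^o's formula and the remark that uniqueness is no longer claimed both match the paper.

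The one genuine soft spot is your invocation of plain Schauder, which requires that $\CT$ map a closed ball of $\CC([0,T],\bbR^d)$ into itself. You assert this ``using the explicit softmax form of $\conspointnoarg$ and the fourth moment bound,'' but it is not automatic: the second-moment estimate $\bbE\|\widetilde V^u_t\|_2^2\lesssim(1+\bbE\|\widetilde V_0\|_2^2+M^2T)e^{CT}$ for $\|u\|_\infty\le M$, fed through $\|\conspoint{\widetilde\rho^u_t}\|_2^2\le b_1+b_2\bbE\|\widetilde V^u_t\|_2^2$, gives a bound on $\|\CT u\|_\infty$ whose coefficient of $M^2$ is $\sim b_2\,T\,e^{CT}$ and is $<1$ only for small $T$; for large $T$ the ball is not obviously preserved. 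The paper avoids this by using the Leray--Schauder (Schaefer) theorem, which replaces ball-invariance by a uniform a priori bound on the set $\{u:u=\tau\CT u,\ \tau\in[0,1]\}$. That set is much smaller: the fixed-point relation $u=\tau\conspoint{\widetilde\rho^u}$ lets one close a Gr\"onwall loop between the moments of $\widetilde V^u$ and $\|u\|_\infty$ without first positing a radius $M$, giving the bound uniformly in $\tau$ and for arbitrary $T$. You should either switch to Leray--Schauder as the paper does, or supply the ball-invariance argument (e.g., via a short-time iteration) explicitly.
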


\begin{proof}[Proof sketch]
	The proof is based on the Leray-Schauder fixed point theorem and follows the steps taken in~\cite[Theorems~3.1, 3.2]{carrillo2018analytical}.

	\noindent\textbf{Step 1:} For a given function $u\in\mathcal{C}([0,T],\mathbb{R}^d)$ and an initial measure~$\rho_0\in\CP_4(\bbR^d)$, according to standard SDE theory~\cite[Chapter~6]{arnold1974stochasticdifferentialequations}, we can uniquely solve the auxiliary SDE
		\begin{align*}
			d\widetilde{V}_t = -\lambda\big(\widetilde{V}_t-u_t\big)\cutoffnoarg\big(\mathcal{E}(\widetilde{V}_t)-\mathcal{E}(u_t)\big)\,dt + \sigma\Nbig{\widetilde{V}_t-u_t}_2\,dB_t \quad \text{with}\quad \widetilde{V}_0 \sim\rho_0,
		\end{align*}
		as the coefficients are locally Lipschitz continuous and have at most linear growth, due to the assumptions on $\CE$ and $\cutoffnoarg$.
		This induces $\widetilde\rho_t=\Law(\widetilde{V}_t)$.
		Moreover, the regularity of the initial distribution $\rho_0 \in \CP_4(\bbR^d)$ allows for a fourth-order moment estimate of the form~$\mathbb{E}\Nnormal{\widetilde{V}_t}_2^4\leq \big(1+\mathbb{E}\Nnormal{\widetilde{V}_0}_2^4\big)e^{ct}$, see, e.g.\@~\cite[Chapter~7]{arnold1974stochasticdifferentialequations}.
		So, in particular, $\widetilde\rho\in\CC([0,T],\CP_4(\bbR^d))$.

		\noindent\textbf{Step 2:} Let us now define, for some constant $C>0$, the test function space
		\begin{align}
		\label{eq:function_space}
			\CC^2_{*}(\bbR^d):=\big\{\phi\in\CC^2(\bbR^d):\Nnormal{\nabla\phi(v)}_2\leq C(1+\Nnormal{v}_2) \text{ and } \sup_{v\in \bbR^d}|\Delta \phi(v)| < \infty\big\}.
		\end{align}
		For some $\phi\in\CC^2_{*}(\bbR^d)$, by It\^o's formula, we derive
		\begin{align*}
		\begin{split}
			d\phi(\widetilde{V}_t) &= \nabla\phi(\widetilde{V}_t)\cdot\big(-\lambda\big(\widetilde{V}_t-u_t\big)\cutoffnoarg\big(\mathcal{E}(\widetilde{V}_t)-\mathcal{E}(u_t)\big)\,dt + \sigma\Nbig{\widetilde{V}_t-u_t}_2\,dB_t\big)\\
			&\qquad\quad+\frac{1}{2}\sigma^2\Delta\phi(\widetilde{V}_t)\Nbig{\widetilde{V}_t-u_t}_2^2\,dt.
		\end{split}
		\end{align*}
		After taking the expectation, applying Fubini's theorem and observing that the stochastic integral~$\bbE\int_0^t\nabla\phi(\widetilde{V}_t)\cdot\Nnormal{\widetilde{V}_t-u_t}_2\,dB_t$ vanishes as a consequence of \cite[Theorem 3.2.1(iii)]{oksendal2013stochastic} due to the established regularity $\widetilde\rho\in\CC([0,T],\CP_4(\bbR^d))$ and $\phi\in\CC^2_{*}(\bbR^d)$, we obtain
		\begin{align*}
			\frac{d}{dt}\bbE\phi(\widetilde{V}_t) = -\lambda\bbE\nabla\phi(\widetilde{V}_t)\cdot\big(\widetilde{V}_t-u_t\big)\cutoffnoarg\big(\mathcal{E}(\widetilde{V}_t)-\mathcal{E}(u_t)\big) + \frac{\sigma^2}{2}\mathbb{E}\Delta\phi(\widetilde{V}_t)\Nbig{\widetilde{V}_\tau-u_t}_2^2
		\end{align*}
		according to the fundamental theorem of calculus. This shows that
		the measure~$\widetilde\rho\in\CC([0,T],\CP_4(\bbR^d))$ satisfies the Fokker-Planck equation
		\begin{align} \label{proof:auxiliary_Fokker-Planck}
		\begin{aligned}
			\frac{d}{dt}\int\phi(v)\,d\widetilde\rho_t(v) =& -\lambda\int\cutoff{\CE(v)-\CE(u_t)}\left\langle v-u_t,\nabla\phi(v)\right\rangle d\widetilde\rho_t(v) \\
			&+ \frac{\sigma^2}{2}\int\N{v-u_t}_2^2\Delta\phi(v)\,d\widetilde\rho_t(v).
		\end{aligned}
		\end{align}
		\noindent The remainder is identical to the cited paper and is summarized briefly for completeness.

		\noindent\textbf{Step 3:} Setting $\CT u:=\conspoint{\widetilde\rho}\in\mathcal{C}([0,T],\mathbb{R}^d)$ provides the self-mapping property of the map
		\begin{align*}
			\CT:\CC([0,T],\mathbb{R}^d)\rightarrow\CC([0,T],\bbR^d), \quad u\mapsto\mathcal{T}u=\conspoint{\widetilde\rho},
		\end{align*}
		which is compact as a consequence of both a stability estimate for the consensus point showing that $\Nnormal{\conspoint{\widetilde\rho_t}-\conspoint{\widetilde\rho_s}}_2 \lesssim W_2(\widetilde\rho_t,\widetilde\rho_s)$ for $\widetilde\rho_t,\widetilde\rho_s\in\CP_4(\bbR^d)$~\cite[Lemma~3.2]{carrillo2018analytical} and the \mbox{H\"{o}lder-$1/2$} continuity of the Wasserstein-$2$ distance~$W_2(\widetilde\rho_t,\widetilde\rho_s)$.

		\noindent\textbf{Step 4:} Finally, for $u=\vartheta\CT u$ with $\vartheta\in[0,1]$, there exists $\rho\in\CC([0,T],\CP_4(\bbR^d))$ satisfying~\eqref{proof:auxiliary_Fokker-Planck} such that $u=\vartheta \conspoint{\rho}$, for which a uniform bound can be obtained either due to the boundedness or the growth condition of~$\CE$.
		An application of the Leray-Schauder fixed point theorem concludes the proof by providing a solution to~\eqref{eq:dyn_macro}.
\end{proof}

\subsection{Global convergence in mean-field law} \label{subsec:main_result}
We now present the main result about global convergence in mean-field law for objectives satisfying the following.

\begin{definition}[Assumptions] \label{def:assumptions}
	Throughout we are interested in objective functions $\CE \in \CC(\bbR^d)$, for which
	\begin{enumerate}[label=A\arabic*,labelsep=10pt,leftmargin=35pt]
		\item\label{asm:zero_global} there exists $v^*\in\bbR^d$ such that $\CE(\globmin)=\inf_{v\in\bbR^d} \CE(v)=:\underbar{\CE}$, and
		\item\label{asm:icp} there exist $\CE_{\infty},R_0,\eta > 0$, and $\nu \in (0,\infty)$ such that
		\begin{align}
			\label{eq:asm_icp_vstar}
			\N{v-v^*}_2 &\leq (\CE(v)-\minobj)^{\nu}/\eta \quad \text{ for all } v \in B_{R_0}(\globmin),\\
			\label{eq:asm_icp_farfield}
			\CE(v)-\minobj &> \CE_{\infty} \quad \text{ for all } v \in \big(B_{R_0}(\globmin)\big)^c.
		\end{align}
	\end{enumerate}
	Furthermore, for the case $H\not\equiv 1$, we additionally require that $\CE$ fulfills a local Lipschitz continuity-like condition, i.e.,
	\begin{enumerate}[label=A\arabic*,labelsep=10pt,leftmargin=35pt]
		\setcounter{enumi}{2}
		\item\label{asm:local_lipschitz} there exist $L_{\CE} > 0$ and $\gamma \geq 0$ such that
		\begin{align}
			\CE(v) - \minobj \leq L_{\CE}(1+\N{v-\globmin}_2^{\gamma})\N{v-\globmin}_2 \quad \text{ for all } v \in\bbR^d.
		\end{align}
	\end{enumerate}
\end{definition}

\begin{remark} \label{rem:ICP_E}
	The analyses in \cite{carrillo2018analytical} and related works require $\CE \in \CC^2(\bbR^d)$ and an additional boundedness assumptions on the Laplacian $\Delta \CE$.
	We relax these regularity requirements and use the conditions \revisedTwo{in Definition~\ref{def:assumptions} on $\CE$ instead.}
\end{remark}

Assumption~\ref{asm:zero_global} just states that the continuous objective $\CE$ attains its infimum $\minobj$ at some $\globmin \in \bbR^d$.
The continuity itself can be further relaxed at the cost of additional technical details because it is  only required in a small neighborhood of $\globmin$.

Assumption~\ref{asm:icp} should be interpreted as a tractability condition of the landscape of $\CE$ around $\globmin$ and in the farfield.
The first part, Equation~\eqref{eq:asm_icp_vstar}, describes  the local coercivity of $\CE$, which implies that there is a unique minimizer $\globmin$ on $B_{R_0}(\globmin)$ and that $\CE$ grows like  $v\mapsto \Nnormal{v-\globmin}_2^{1/\nu}$.
This condition is also known as the inverse continuity condition from \cite{fornasier2020consensus_sphere_convergence}, as a quadratic growth condition in the case $\nu= 1/2$ from \cite{anitescu2000degenerate, necoara2019linear}, or as the H\"olderian error bound condition in the case $\nu \in (0,1]$ \cite{bolte2017error}.
In \cite[Theorem~4]{necoara2019linear} and \cite[Theorem~2]{karimi2016linear} many equivalent or stronger conditions are identified to imply Equation~\eqref{eq:asm_icp_vstar} globally on $\bbR^d$.
Furthermore, in \cite{xu2017adaptive,fornasier2020consensus_sphere_convergence}, \eqref{eq:asm_icp_vstar} is shown to hold globally for objectives related to various machine learning problems.
The second part of \ref{asm:icp}, Equation~\eqref{eq:asm_icp_farfield}, describes the behavior of $\CE$ in the farfield and prevents $\CE(v) \approx \minobj$ for some $v \in \bbR^d$ far away from $\globmin$.
We introduce it for the purpose of covering functions that tend to a constant just above $\CE_{\infty}$ as $\Nnormal{v}_{2}\!\rightarrow\!\infty$, because such functions do not satisfy the growth condition \eqref{eq:asm_icp_vstar} globally.
However, whenever \eqref{eq:asm_icp_vstar} holds globally, we take $R_0 = \infty$, i.e.,  $B_{R_0}(\globmin) = \bbR^d$ and \eqref{eq:asm_icp_farfield} is void.
We also note that \eqref{eq:asm_icp_vstar} and \eqref{eq:asm_icp_farfield} imply the uniqueness of the global minimizer $\globmin$ on $\bbR^d$.

Finally, to cover the active cutoff case $H\not\equiv 1$, we additionally require \ref{asm:local_lipschitz}.
The condition is weaker than local Lipschitz-continuity on any compact ball around $\globmin$, with Lipschitz constant growing with the size of the ball.


We are now ready to state the main result. The proof is deferred to Section~\ref{sec:proof_main_theorem}.

\begin{theorem} \label{thm:global_convergence_main}
	Let $\CE \in \CC(\bbR^d)$ satisfy \ref{asm:zero_global}--\ref{asm:icp}.
	Moreover, let $\rho_0 \in \CP_4(\bbR^d)$ be such that $\globmin\in\supp(\rho_0)$.
    Define $\CV(\rho_t)$ as given in \eqref{def:J}.
	Fix any $\varepsilon \in (0,\CV(\rho_0))$ and $\vartheta \in (0,1)$,
    \revised{choose} parameters $\lambda,\sigma > 0$ with $2\lambda > d \sigma^2 $, and
    \revised{define} the time horizon
	\begin{align} \label{eq:end_time_star_statement}
		T^* := \frac{1}{(1-\vartheta)\big(2\lambda-d\sigma^2\big)}\log\left(\frac{\CV(\rho_0)}{\varepsilon}\right).
	\end{align}
	Then there exists $\alpha_0 > 0$, depending (among problem dependent quantities) on $\varepsilon$ and $\vartheta$, such that for all $\alpha > \alpha_0$, if $\rho \in \CC([0,T^*], \CP_4(\bbR^d))$ is a weak solution to the Fokker-Planck equation~\eqref{eq:fokker_planck} on the time interval $[0,T^*]$ with initial condition $\rho_0$,
    we have 
    \begin{align}
        \label{eq:end_time_statement}
        \revised{\CV(\rho_T) = \varepsilon
        \quad\text{ with }\quad
        T\in\left[\frac{1-\vartheta}{(1+\vartheta/2)}\;\!T^*,T^*\right].}
    \end{align}
	Furthermore, \revised{on the time interval $[0,T]$, $\CV(\rho_t)$ decays at least exponentially fast. More precisely, for all $t\in[0,T]$, it holds}
	\begin{align} \label{eq:thm:global_convergence_main:V}
		W_2^2(\rho_t,\delta_{\globmin}) = 2 \CV(\rho_t) \leq 2 \CV(\rho_0) \exp\left(-(1-\vartheta)\left(2\lambda- d\sigma^2\right) t\right).
	\end{align}
	If $\CE$ additionally satisfies \ref{asm:local_lipschitz}, the same conclusion holds for any $H: \bbR^d\rightarrow [0,1]$ that satisfies $H(x) = 1$ whenever $x\geq 0$.
\end{theorem}


The assumption $\globmin\in\supp(\rho_0)$ about the initial configuration~$\rho_0$ is \revisedTwo{not really} a restriction, as it would anyhow hold immediately for $\rho_t$ for any $t>0$ in view of the diffusive character of the dynamics~\eqref{eq:fokker_planck}, see Remark~\ref{remark:lem:lower_bound_probability}.
Additionally, as we clarify in the next section, this condition does neither mean nor require that, for finite particle approximations, some particle needs to be in the vicinity of the minimizer~$\globmin$ at time~$t=0$.
It is actually sufficient that the empirical measure~$\empmeasure{t}$ weakly approximates the law~$\rho_t$ uniformly in time. We rigorously explain this mechanism in Section~\ref{subsec:convergence_probability}.

A lower bound on the rate of convergence in~\eqref{eq:thm:global_convergence_main:V} is $(1-\vartheta)(2\lambda-d\sigma^2)$, which can be made arbitrarily close to the numerically observed rate $(2\lambda-d\sigma^2)$ (see, e.g., Figure~\ref{fig:comparison}) at the cost of taking $\alpha \rightarrow \infty$ to allow for $\vartheta \rightarrow 0$.
The condition $2\lambda > d\sigma^2$ is necessary, both in theory and practice, to avoid overwhelming the dynamics by the random exploration term.
The dependency on $d$ can be eased by replacing the isotropic Brownian motion in the dynamics with an anisotropic one~\cite{carrillo2019consensus,fornasier2021convergence}.

\subsection{Global convergence  in probability} \label{subsec:convergence_probability}

To stress the relevance of the main result of this paper, Theorem~\ref{thm:global_convergence_main}, we now show how Estimate~\eqref{eq:thm:global_convergence_main:V} plays a fundamental role in establishing a quantitative convergence result for the numerical scheme~\eqref{eq:dyn_micro_discrete} to the global minimizer~$\globmin$.
By paying the price of having a probabilistic statement about the convergence of CBO as in Theorem~\ref{thm:full_global_convergence}, we gain provable polynomial complexity.
For simplicity, we present the results of this section for the case of an inactive cutoff function, i.e., $H\equiv1$.

\begin{theorem} \label{thm:full_global_convergence}
	Fix $\varepsilon_{\mathrm{total}}>0$ and $\delta \in(0,1/2)$.
	Then, under the assumptions of Theorem~\ref{thm:global_convergence_main} and Proposition~\ref{prop:MFL}, \revised{and with $K:=T/ \Delta t$, where $T$ is as in \eqref{eq:end_time_statement},} the iterations~$((V_{k\Delta t}^i)_{k=0,\ldots,K})_{i=1,\ldots,N}$ generated by the numerical scheme~\eqref{eq:dyn_micro_discrete} converge in probability to $\globmin$. More precisely, the empirical mean of the final iterations fulfills 
	\begin{align} \label{eq:thm:full_global_convergence:error}
		\N{\frac{1}{N}\sum_{i=1}^N V_{K\Delta t}^i - \globmin}_2^2 
		\leq \varepsilon_{\mathrm{total}}
	\end{align}
	with probability larger than $1-\left(\delta + \varepsilon_{\mathrm{total}}^{-1}(6C_{\mathrm{NA}}(\Delta t)^{2m} + 3C_{\mathrm{MFA}}N^{-1} + 12\varepsilon)\right)$.
	Here, $m$ denotes the order of accuracy of the numerical scheme (for the Euler-Maruyama scheme $m=1/2$) and $\varepsilon$ is the error from Theorem~\ref{thm:global_convergence_main}.
	Moreover, besides problem-dependent constants, $C_{\mathrm{NA}}>0$ depends linearly on the dimension~$d$ and the number of particles~$N$, exponentially on the time horizon~$T$, and on $\delta^{-1}$; $C_{\mathrm{MFA}}>0$ depends exponentially on the parameters~$\alpha$, $\lambda$ and $\sigma$, on $T$, and on $\delta^{-1}$.
\end{theorem}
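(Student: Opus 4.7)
The plan is to decouple the total error $\bigl\|N^{-1}\sum_{i=1}^N V^i_{K\Delta t}-\globmin\bigr\|_2^2$ into three independent sources of approximation error and to control each separately. To this end, I introduce two auxiliary continuous-time coupled families of processes on $[0,T^*]$ sharing both initial data and Brownian motions $(B^i)_{i=1}^N$ with the discrete scheme~\eqref{eq:dyn_micro_discrete}: $(\widetilde V^i_t)_{t\in[0,T^*]}$, the exact continuous-time solution of the interacting particle SDE~\eqref{eq:dyn_micro}, and $(\overbar V^i_t)_{t\in[0,T^*]}$, a family of $N$ i.i.d.\@~copies of the mean-field dynamics~\eqref{eq:dyn_macro} with common initial law $\rho_0$, driven by the same $(B^i)$. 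Adding and subtracting, and combining the elementary inequality $\|a+b+c\|_2^2\leq 3(\|a\|_2^2+\|b\|_2^2+\|c\|_2^2)$ with Jensen's inequality, yields
\begin{align*}
    \N{\frac{1}{N}\sum_{i=1}^N V^i_{K\Delta t}-\globmin}_2^2
    &\leq \frac{3}{N}\sum_{i=1}^N \N{V^i_{K\Delta t}-\widetilde V^i_{K\Delta t}}_2^2 + \frac{3}{N}\sum_{i=1}^N \N{\widetilde V^i_{K\Delta t}-\overbar V^i_{K\Delta t}}_2^2 \\
    &\quad + 3\N{\frac{1}{N}\sum_{i=1}^N \overbar V^i_{K\Delta t}-\globmin}_2^2,
\end{align*}
the three contributions corresponding, respectively, to (i) the numerical discretization error, (ii) the mean-field approximation error, and (iii) a combined convergence-in-mean-field-law plus empirical law-of-large-numbers error.

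Each of the three contributions is then estimated in expectation. For (i) I would apply a classical strong-convergence estimate for the Euler-Maruyama scheme, applicable to~\eqref{eq:dyn_micro} thanks to the locally Lipschitz, linearly growing coefficients and the moment bounds inherited from Theorem~\ref{thm:well-posedness_FP_cutoff} and from \cite[Lemma~3.4]{carrillo2018analytical}; a Gr\"onwall-type argument yields $\bbE\N{V^i_{K\Delta t}-\widetilde V^i_{K\Delta t}}_2^2\leq 2C_{\mathrm{NA}}(\Delta t)^{2m}$, with $C_{\mathrm{NA}}$ depending linearly on $d$ and $N$ and exponentially on $T^*$. For (ii) I would invoke the quantitative mean-field approximation Proposition~\ref{prop:MFL} to obtain an event $\Omega_{\mathrm{MFA}}$ with $\bbP(\Omega_{\mathrm{MFA}})\geq 1-\delta$ on which $\max_{i}\bbE\bigl[\N{\widetilde V^i_{K\Delta t}-\overbar V^i_{K\Delta t}}_2^2\,\mathbbm{1}_{\Omega_{\mathrm{MFA}}}\bigr]\leq C_{\mathrm{MFA}} N^{-1}$. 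For (iii), since the $\overbar V^i_{K\Delta t}$ are i.i.d.\@~with common law $\rho_{K\Delta t}$, Jensen's inequality applied to the convex map $x\mapsto\|x\|_2^2$ gives $\bbE\bigl\|N^{-1}\sum_{i}\overbar V^i_{K\Delta t}-\globmin\bigr\|_2^2\leq \bbE\N{\overbar V^1_{K\Delta t}-\globmin}_2^2 = 2\CV(\rho_{K\Delta t})$, and Theorem~\ref{thm:global_convergence_main} (with the horizon chosen so that $K\Delta t = T^*$) bounds the right-hand side by $2\varepsilon$.

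Combining the three contributions provides the expectation bound
\[
    \bbE\!\left[\N{\frac{1}{N}\sum_{i=1}^N V^i_{K\Delta t}-\globmin}_2^2\mathbbm{1}_{\Omega_{\mathrm{MFA}}}\right]\leq 6 C_{\mathrm{NA}}(\Delta t)^{2m} + 3 C_{\mathrm{MFA}} N^{-1} + 6\varepsilon.
\]
Markov's inequality applied to this bound, combined with a union bound that absorbs the complementary event $\Omega_{\mathrm{MFA}}^c$ of probability at most $\delta$, then delivers exactly the claimed probabilistic estimate~\eqref{eq:thm:full_global_convergence:error}.

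The hard part will be establishing the quantitative mean-field rate $C_{\mathrm{MFA}} N^{-1}$ on the unbounded domain $\bbR^d$, i.e., Proposition~\ref{prop:MFL}: the coefficients of~\eqref{eq:dyn_micro} fail to be globally Lipschitz because of the weighted average $\conspointnoarg$, so the standard synchronous coupling argument of Sznitman must be supplemented by fourth-moment estimates and by a stopping-time/truncation argument controlling excursions of the particles outside large balls. This is precisely what forces the overall statement to be probabilistic---whence the parameter $\delta$---and is also responsible for the exponential dependence of $C_{\mathrm{MFA}}$ on $\alpha$ via the constant $C_\alpha$ of~\eqref{eq:Calpha_constant}; balancing this against the large-$\alpha$ regime required for Theorem~\ref{thm:global_convergence_main} to be effective is what makes the resulting polynomial-complexity statement delicate.
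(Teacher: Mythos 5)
Your proposal is correct and follows essentially the same route as the paper: the identical three-term decomposition (Euler--Maruyama discretization error, quantitative mean-field approximation via Proposition~\ref{prop:MFL} restricted to the high-probability bounded event of Lemma~\ref{lem:PBND}, and the mean-field optimization error bounded by $2\CV(\rho_{T^*})\leq 2\varepsilon$ through Jensen's inequality and Theorem~\ref{thm:global_convergence_main}), followed by Markov's inequality and a union bound absorbing the complement of the bounded event. The only difference is bookkeeping---you carry the restriction with indicator functions rather than the paper's conditional expectations with the $3/(1-\delta)$ factor---which does not change the argument.
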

%

\revised{Let us briefly discuss in the following remark the computational complexity of the numerical scheme~\eqref{eq:dyn_micro_discrete} together with some implementational aspects which allow to reduce the overall runtime of the algorithm in practice.}

\revised{\begin{remark}[Computational complexity]
    To achieve 
    Estimate \eqref{eq:thm:full_global_convergence:error}
    with probability of at least $(1-2\delta)$, the implementable CBO scheme~\eqref{eq:dyn_micro_discrete} has to be run using
    \mbox{$N \geq 9C_{\mathrm{MFA}}/(\delta\varepsilon_{\mathrm{total}})$} agents
    and with
    time step size $\Delta t \leq \!\!\sqrt[2m]{\delta\varepsilon_{\mathrm{total}}/(18 C_{\mathrm{NA}})}$
    for 
    \begin{equation*}
        K \geq \frac{1}{(1-\vartheta)\big(2\lambda-d\sigma^2\big)}\frac{1}{\Delta t}\log\left(\frac{36\CV(\rho_0)}{\delta\varepsilon_{\mathrm{total}}}\right)
    \end{equation*}
    iterations.
    Here, the parameter dependence of $C_{\mathrm{NA}}$ and $C_{\mathrm{MFA}}$ is as described in Theorem~\ref{thm:full_global_convergence}.
    The computational complexity (counted in terms of the number of evaluations of the objective~$\CE$) of the CBO method is therefore given by $\CO(KN)$.

    When working in the setting of large-scale applications arising, for instance, in machine learning and signal processing (therefore, with $\CE$ being expensive to compute), several considerations allow to reduce the overall runtime of the algorithm~\eqref{eq:dyn_micro_discrete} and thereby make the method feasible and more competitive.
    First of all, it may be recommendable to leverage that the evaluations of the objective function~$\CE$ for each of the $N$ particles can be performed in parallel.
    Furthermore, random mini-batch sampling ideas as proposed in \cite{carrillo2019consensus,fornasier2021convergence} may be employed when evaluating the objective function and/or computing the consensus point.
    I.e., at each time step, $\CE$ is evaluated only on a random subset of the available data, and $\conspointnoarg$ is computed only from a subset of the $N$ particles.
    Besides immediately reducing the computational and communication complexity of CBO methods, such ideas motivate communication-efficient parallelization of the algorithm by evolving disjoint subsets of particles independently for some time with separate consensus points, before aligning the dynamics through a global communication step.
    This, however, is so far largely unexplored, both from a theoretical and practical point of view.
    Lastly, taking inspiration from genetic algorithms, a variance-based particle reduction technique as suggested in \cite{fornasier2020consensus_sphere_convergence} may be used to reduce the number of optimizing agents (and therefore the required evaluations of $\CE$) during the algorithm in case concentration of the particles is observed.
\end{remark}}

The proof of Theorem~\ref{thm:full_global_convergence}, which we report below, combines our main result about the convergence in mean-field law, a quantitative mean-field approximation and classical results of numerical approximation of SDEs.
To this end, we establish in what follows the result about the quantitative mean-field approximation on a restricted set of bounded processes.
For this purpose, let us introduce the common probability space~$(\Omega, \CF, \bbP)$ over which all considered stochastic processes get their realizations, and define a subset $\Omega_M$ of $\Omega$ of suitably bounded  processes according to
\begin{align*}
    \textstyle
	\Omega_M:=\left \{\omega \in \Omega: \sup_{t\in[0,T]} \frac{1}{N}\sum_{i=1}^N \max\left\{\N{V_t^i(\omega)}_2^4,\N{\overbar{V}_t^i(\omega)}_2^4\right\} \leq M  \right \}.
\end{align*}
Throughout this section, $M>0$ denotes a constant which we shall adjust at the end of the proof of Theorem~\ref{thm:full_global_convergence}. Before stating the mean-field approximation result, Proposition~\ref{prop:MFL}, let us estimate the measure of the set~$\Omega_M$ in Lemma~\ref{lem:PBND}.
The proofs of both statements are deferred to Section~\ref{sec:proof_probabilistic_MFA}.

\begin{lemma} \label{lem:PBND}
	Let $T > 0$, $\rho_0 \in \CP_{4}(\bbR^d)$ and let $N\in\bbN$ be fixed.
	Moreover, let $((V_t^i)_{t\geq0})_{i=1,\dots,N}$ denote the \revised{strong} solution to system~\eqref{eq:dyn_micro} and let $((\overbar{V}_t^i)_{t\geq0})_{i=1,\dots,N}$ be $N$ independent copies of the \revised{strong} solution to the mean-field dynamics~\eqref{eq:dyn_macro}.
	Then, under the assumptions of Theorem~\ref{thm:well-posedness_FP}, for any $M>0$ we have
	\begin{align} \label{eq:prop:MFL:OmegaM}
		\bbP\left(\Omega_M\right) =
		\bbP\left(\sup_{t\in[0,T]} \frac{1}{N}\sum_{i=1}^N \max\left\{\N{V_t^i}_2^4,\N{\overbar{V}_t^i}_2^4\right\} \leq M\right)
		\geq 1 - \frac{2K}{M},
	\end{align}
	where $K=K(\lambda,\sigma,d,T, b_1,b_2)$ is a constant, which is in particular independent of $N$.
	Here, $b_1$ and $b_2$ denote the problem-dependent constants from~\cite[Lemma~3.3]{carrillo2018analytical}.
\end{lemma}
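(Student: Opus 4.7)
The approach is to convert the desired probabilistic statement into a uniform-in-$N$ moment estimate via Markov's inequality. Using $\max\{a,b\}\leq a+b$ for $a,b\geq 0$, it suffices to establish that
\begin{equation*}
    \bbE\!\left[\sup_{t\in[0,T]}\frac{1}{N}\sum_{i=1}^N\N{V_t^i}_2^4\right]\leq K
    \quad\text{and}\quad
    \bbE\!\left[\sup_{t\in[0,T]}\frac{1}{N}\sum_{i=1}^N\N{\overbar V_t^i}_2^4\right]\leq K
\end{equation*}
for a constant $K=K(\lambda,\sigma,T,b_1,b_2)$ independent of $N$ and $d$, after which Markov applied to the sum of the two random variables yields the claim $\bbP(\Omega_M^c)\leq 2K/M$. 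Interchanging supremum and sum via $\sup_t N^{-1}\sum_i X_t^i \leq N^{-1}\sum_i\sup_t X_t^i$, and invoking the exchangeability of $(V^i)_{i=1,\ldots,N}$ together with the i.i.d.\ structure of $(\overbar V^i)_{i=1,\ldots,N}$, both expectations reduce to the single-particle estimates $\bbE\sup_{t\in[0,T]}\N{V_t^1}_2^4\leq K$ and $\bbE\sup_{t\in[0,T]}\N{\overbar V_t^1}_2^4\leq K$, respectively.

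\textbf{Pathwise fourth-moment bound via It\^o + BDG + Gronwall.} Applying It\^o's formula to $t\mapsto\N{V_t^1}_2^4$, the drift is pointwise dominated by a multiple of $\N{V^1}_2^2\bigl(\N{V^1}_2+\N{\conspoint{\empmeasure{t}}}_2\bigr)^2$ (the cutoff $H\in[0,1]$ is harmlessly absorbed), while the stochastic integral has quadratic variation controlled by $\sigma^2\N{V^1-\conspoint{\empmeasure{t}}}_2^2\N{V^1}_2^6$. Taking $\sup_{s\in[0,t]}$ and expectation, and estimating the martingale part through the Burkholder-Davis-Gundy inequality, produces an integral inequality of the form
\begin{equation*}
    \bbE\!\sup_{s\in[0,t]}\N{V_s^1}_2^4 \leq C\,\bbE\N{V_0^1}_2^4 + C\int_0^t\!\left(1+\bbE\N{V_s^1}_2^4 + \bbE\N{\conspoint{\empmeasure{s}}}_2^4\right)ds.
\end{equation*}
The moment of the consensus point is then controlled by Jensen's inequality applied to the probability measure $\omega_\alpha\,d\empmeasure{s}/\N{\omega_\alpha}_{L_1(\empmeasure{s})}$, which yields $\N{\conspoint{\empmeasure{s}}}_2^4 \leq N^{-1}\sum_j\N{V_s^j}_2^4\,\omega_\alpha(V_s^j)/\N{\omega_\alpha}_{L_1(\empmeasure{s})}$; a uniform lower bound on the denominator provided by~\cite[Lemma~3.3]{carrillo2018analytical} (where the constants $b_1,b_2$ enter) and exchangeability make $\bbE\N{\conspoint{\empmeasure{s}}}_2^4$ comparable to $\bbE\N{V_s^1}_2^4$. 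Gronwall's lemma then closes the self-referential bound with a constant growing exponentially in $T$ but uniformly in $N$ and $d$. The mean-field case follows along the same lines but is simpler, since $\conspoint{\rho_s}$ is deterministic and directly controlled by~\cite[Lemma~3.3]{carrillo2018analytical}.

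\textbf{Main obstacle.} The principal subtlety is retaining $N$- and $d$-independence of $K$ in the presence of the empirical-measure-dependent diffusion coefficient $\sigma\N{V^1-\conspoint{\empmeasure{t}}}_2$, whose BDG estimate inherently couples all $N$ particles through $\conspoint{\empmeasure{t}}$. The Jensen contraction of $\N{\conspoint{\empmeasure{s}}}_2^4$ onto a weighted empirical fourth moment, combined with the a priori second-moment control from \cite[Lemma~3.3]{carrillo2018analytical} and exchangeability, is exactly what neutralizes this mixing and allows the estimate to close without accumulating a dimensional or particle-number penalty in the final constant.
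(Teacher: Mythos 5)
Your overall architecture — reduce to a uniform-in-$N$ bound on $\bbE\sup_{t\in[0,T]}\frac{1}{N}\sum_i\|V_t^i\|_2^4$ (and the analogous quantity for $\overbar V$), then conclude by Markov's inequality — agrees with the paper's. The reduction to a single particle via exchangeability / the i.i.d.\ structure, the use of Burkholder-Davis-Gundy for the martingale part, the invocation of \cite[Lemma~3.3]{carrillo2018analytical} for the consensus point, and the Gr\"onwall closure are all the right ingredients and also appear in the paper. Two steps, however, go off the rails.

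\textbf{The It\^o route introduces an explicit $d$-factor.} Applying It\^o's formula to $\|V_t^1\|_2^4$ unavoidably produces the correction term
\begin{equation*}
	\tfrac{1}{2}\sigma^2\N{V_t^1-\conspoint{\empmeasure{t}}}_2^2\,\Delta\big(\N{\cdot}_2^4\big)(V_t^1) = (2d+4)\,\sigma^2\N{V_t^1-\conspoint{\empmeasure{t}}}_2^2\N{V_t^1}_2^2,
\end{equation*}
since $\Delta\|v\|_2^4 = (4d+8)\|v\|_2^2$. This $d$-factor propagates into the Gr\"onwall constant, so the resulting $K$ would depend on $d\sigma^2$ rather than only on $\lambda,\sigma,T,b_1,b_2$ as the lemma asserts. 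The paper's proof deliberately avoids the Laplacian by not applying It\^o to $\|V_t\|_2^4$ at all: it takes $\|\cdot\|_2^4$ of the integrated form of the SDE via the triangle inequality, obtaining $\|V_t^i\|_2^4 \lesssim \|V_0^i\|_2^4 + \lambda^4\|\int_0^t(V_\tau^i-\conspoint{\empmeasure{\tau}})d\tau\|_2^4 + \sigma^4\|\int_0^t\|V_\tau^i-\conspoint{\empmeasure{\tau}}\|_2\,dB_\tau^i\|_2^4$, and then applies BDG directly to the fourth power of the stochastic-integral norm. That is the decomposition that lets the authors claim $K$ is dimension-free.

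\textbf{The consensus-point bound is not what \cite[Lemma~3.3]{carrillo2018analytical} supplies.} You write $\|\conspoint{\empmeasure{s}}\|_2^4 \leq N^{-1}\sum_j\|V_s^j\|_2^4\,\omegaa(V_s^j)/\|\omegaa\|_{L_1(\empmeasure{s})}$ by Jensen and then invoke ``a uniform lower bound on the denominator provided by \cite[Lemma~3.3]{carrillo2018analytical}.'' No such uniform lower bound on $\|\omegaa\|_{L_1(\empmeasure{s})}$ exists in general: for unbounded $\CE$ the ensemble can drift so that $\frac{1}{N}\sum_j e^{-\alpha\CE(V_s^j)}$ becomes arbitrarily small, and a lower bound of that type in the paper only appears in the proof of Proposition~\ref{prop:MFL} after the cutoff $I_M$ has been inserted — precisely the restriction that Lemma~\ref{lem:PBND} is supposed to make available. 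What \cite[Lemma~3.3]{carrillo2018analytical} actually gives is the direct estimate $\|\conspoint{\empmeasure{s}}\|_2^2 \leq b_1 + b_2\,\frac{1}{N}\sum_j\|V_s^j\|_2^2$. The correct way to reach the fourth moment is then to square this and apply Jensen once more, obtaining $\|\conspoint{\empmeasure{s}}\|_2^4 \leq 2b_1^2 + 2b_2^2\,\frac{1}{N}\sum_j\|V_s^j\|_2^4$, which is exactly what the paper does with constants $b_1^p,b_2^p$ for general $p$.
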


Lemma~\ref{lem:PBND} proves that the processes are bounded with high probability uniformly in time.
Therefore, by restricting the analysis to $\Omega_M$, we can obtain the following quantitative mean-field approximation result \revised{by proving pointwise propoagation of chaos through the coupling method~\cite{chaintron2021propagation,chaintron2022propagation} using a synchronous coupling between the stochastic processes $V^i$ and $\overbar{V}^i$, see, e.g., \cite[Section~4.1.2]{chaintron2022propagation}.}

\begin{proposition} \label{prop:MFL}
	Let $T > 0$, $\rho_0 \in \CP_{4}(\bbR^d)$ and let $N\in\bbN$ be fixed.
	Moreover, let $((V_t^i)_{t\geq0})_{i=1,\dots,N}$ denote the \revised{strong} solution to system~\eqref{eq:dyn_micro} and let $((\overbar{V}_t^i)_{t\geq0})_{i=1,\dots,N}$ be $N$ independent copies of the \revised{strong} solution to the mean-field dynamics~\eqref{eq:dyn_macro}.
	Further consider valid the assumptions of Theorem~\ref{thm:well-posedness_FP}.
	If $(V_t^i)_{t\geq0}$ and $(\overbar{V}_t^i)_{t\geq0}$ share the initial data as well as the Brownian motion paths~$(B_t^i)_{t\geq0}$ for all $i=1,\dots,N$, then we have 
	\begin{align} \label{eq:prop:MFL:mean-field approximation}
		\max_{i=1,\dots,N}\sup_{t\in[0,T]}\bbE\left[\Nnormal{V^i_t-\overbar V^i_t}_2^2 \,\big|\, \Omega_M\right]\leq C_{\mathrm{MFA}}N^{-1}
	\end{align}
	with $C_{\mathrm{MFA}}=C_{\mathrm{MFA}}(\alpha,\lambda,\sigma,T,C_1,C_2,M,K,\CM_2,b_1,b_2)$, where $K$ is as in Lemma~\ref{lem:PBND} and $\CM_2$ denotes a second-order moment bound of~$\rho$.
\end{proposition}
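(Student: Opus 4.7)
\textbf{Proof plan for Proposition~\ref{prop:MFL}.}
The plan is to carry out the classical synchronous coupling argument (Sznitman's method), adapted from the compact-manifold version in~\cite{fornasier2020consensus_hypersurface_wellposedness}. Because $(V^i_t)$ and $(\overbar V^i_t)$ share both the initial datum and the Brownian path $(B^i_t)$, their difference $Z^i_t := V^i_t - \overbar V^i_t$ starts at zero, and its dynamics is driven purely by the discrepancy between the two drift-diffusion coefficients. First I would write $Z^i_t$ in integral form via~\eqref{eq:dyn_micro} and~\eqref{eq:dyn_macro}, apply It\^o's formula to $\N{Z^i_t}_2^2$, and take conditional expectation on $\Omega_M$. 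Standard manipulations reduce the task to controlling, uniformly in $t\in[0,T]$, the quantities $\bbE\bigl[\bigl.\Nnormal{V^i_t-\overbar V^i_t}_2^2\,\bigr|\,\Omega_M\bigr]$ and $\bbE\bigl[\bigl.\Nnormal{\conspoint{\widehat\rho_t^N}-\conspoint{\rho_t}}_2^2\,\bigr|\,\Omega_M\bigr]$, where appropriate Lipschitz-type bounds on $H$, $\CE$ and the product structure of the coefficients are used together with the restriction to the bounded event $\Omega_M$ to linearize the drift and the $\N{\cdot-\conspoint{\cdot}}_2$ factor in the diffusion.

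The crucial step, and the one where the $N^{-1}$ rate is produced, is to split
\begin{equation*}
\conspoint{\widehat\rho_t^N}-\conspoint{\rho_t}
=\bigl(\conspoint{\widehat\rho_t^N}-\conspoint{\overbar{\widehat\rho}_t^N}\bigr)
+\bigl(\conspoint{\overbar{\widehat\rho}_t^N}-\conspoint{\rho_t}\bigr),
\end{equation*}
where $\overbar{\widehat\rho}_t^N:=\frac{1}{N}\sum_{j=1}^N\delta_{\overbar V^j_t}$ is the empirical measure of the i.i.d.\ mean-field copies. For the first summand I would use the stability estimate of the consensus map, as in~\cite[Lemma~3.2]{carrillo2018analytical}, to bound it by a multiple of $\bigl(\frac{1}{N}\sum_{j=1}^N\Nnormal{Z^j_t}_2^2\bigr)^{1/2}$, up to a factor controlled by $\alpha$ and by the fourth moments of the processes (which are bounded on $\Omega_M$ by construction and in expectation thanks to~\cite[Lemma~3.4]{carrillo2018analytical}, yielding the constant $\CM_2$). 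For the second summand, since $\overbar V^1_t,\dots,\overbar V^N_t$ are i.i.d.\ with law $\rho_t$, this is a genuine law-of-large-numbers deviation for a bounded, $\alpha$-dependent Lipschitz functional of the empirical measure; a straightforward computation of the variance of the numerator and denominator of $\conspoint{\overbar{\widehat\rho}_t^N}$, together with the uniform lower bound $\Nnormal{\omegaa}_{L^1(\rho_t)}\geq e^{-\alpha\sup_{\Omega_M}\CE}$ for the normalization, yields $\bbE\bigl[\bigl.\Nnormal{\conspoint{\overbar{\widehat\rho}_t^N}-\conspoint{\rho_t}}_2^2\,\bigr|\,\Omega_M\bigr]\leq C(\alpha,T,M,\CM_2)\,N^{-1}$.

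Plugging these two estimates back into the It\^o identity and averaging over $i$ gives a closed Gr\"onwall-type inequality of the form
\begin{equation*}
\frac{d}{dt}\,\frac{1}{N}\sum_{i=1}^N\bbE\bigl[\bigl.\Nnormal{Z^i_t}_2^2\,\bigr|\,\Omega_M\bigr]
\leq C_1\,\frac{1}{N}\sum_{i=1}^N\bbE\bigl[\bigl.\Nnormal{Z^i_t}_2^2\,\bigr|\,\Omega_M\bigr]+C_2\,N^{-1},
\end{equation*}
which, upon integrating, delivers the claim with $C_{\mathrm{MFA}}=(C_2/C_1)(e^{C_1 T}-1)$, so the dependencies $(\alpha,\lambda,\sigma,T,C_1,C_2,M,\CM_2,b_1,b_2)$ emerge naturally. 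By exchangeability of the particles the same bound holds for every single index $i$, giving the stated $\max_i$ form.

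The main obstacle I anticipate is the control of the consensus-point difference: the Lipschitz constant of $\indivmeasure\mapsto\conspoint{\indivmeasure}$ grows like $C_\alpha$ in~\eqref{eq:Calpha_constant}, and on the whole space $\bbR^d$ the oscillation of $\CE$ is unbounded. This is precisely why the conditioning on $\Omega_M$ is essential: it replaces the missing compactness of~\cite{fornasier2020consensus_hypersurface_wellposedness} by a deterministic a priori bound on the fourth moments of all $2N$ particles uniformly in $t$, which in turn (i)~makes the stability estimate of the consensus point available with an $\alpha$- and $M$-dependent, but deterministic, Lipschitz constant, (ii)~provides a deterministic lower bound on the normalization $\Nnormal{\omegaa}_{L^1(\widehat\rho_t^N)}$, and (iii)~allows the stochastic integrals to be treated as true martingales so that their conditional expectation vanishes. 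Lemma~\ref{lem:PBND} guarantees that $\Omega_M$ has probability arbitrarily close to one for $M$ large, which is what eventually produces the additive $\delta$-term in the overall convergence statement of Theorem~\ref{thm:full_global_convergence}.
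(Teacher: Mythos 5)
Your overall strategy coincides with the paper's: a synchronous (coupling) estimate for $Z_t^i:=V_t^i-\overbar V_t^i$, the decomposition of $\conspoint{\widehat\rho_t^N}-\conspoint{\rho_t}$ through the empirical measure $\monopmeasurenoarg$ of the i.i.d.\ mean-field copies, a Wasserstein-stability bound for the consensus map together with an LLN estimate for the i.i.d.\ part, and finally Gr\"onwall plus exchangeability. The ingredients and the constants you cite are all the right ones.

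There is, however, one step of your plan that would not go through as literally stated, and it is precisely the point where the paper makes a deliberate technical choice. You write that you would ``take conditional expectation on $\Omega_M$'' and that conditioning on $\Omega_M$ ``allows the stochastic integrals to be treated as true martingales so that their conditional expectation vanishes.'' This is not correct: $\Omega_M$ is defined through a supremum over $t\in[0,T]$ and is therefore $\CF_T$-measurable but not adapted, so $\bbE\big[\int_0^t g_\tau\,dB_\tau\,\big|\,\Omega_M\big]$ need not vanish and It\^o isometry does not directly apply under the conditional measure. The paper avoids this by introducing the \emph{adapted} time-dependent cutoff process
\begin{equation*}
I_M(t)=\mathbbm{1}\Big\{\tfrac{1}{N}\sum_{i=1}^N\max\{\Nnormal{V_\tau^i}_2^4,\Nnormal{\overbar V_\tau^i}_2^4\}\le M\ \text{for all}\ \tau\in[0,t]\Big\},
\end{equation*}
which satisfies $I_M(t)=I_M(t)I_M(\tau)$ for $\tau\le t$; multiplying by $I_M(\tau)$ inside the time and stochastic integrals keeps the integrand adapted, so It\^o isometry and the boundedness needed for the stability and LLN lemmas are available on $\{I_M(\tau)=1\}$. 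All estimates are derived for $\bbE\big[\Nnormal{Z_t^i}_2^2\,I_M(t)\big]$, and only at the very end does one pass to the statement conditioned on $\Omega_M$ by noting $\mathbbm{1}_{\Omega_M}\le I_M(t)$ pointwise (and $\bbP(\Omega_M)\ge 1/2$ by the choice $\delta<1/2$, so the conditioning costs only a harmless constant). If you replace ``condition on $\Omega_M$'' by ``multiply by $I_M(\tau)$ inside the integrals and pass to $\Omega_M$ at the end,'' your argument matches the paper's. A minor secondary remark: the normalization lower bound you quote, $\Nnormal{\omegaa}_{L^1(\rho_t)}\ge e^{-\alpha\sup_{\Omega_M}\CE}$, concerns the deterministic mean-field law $\rho_t$ and does not depend on $\Omega_M$ at all; the randomness to control is in the empirical normalization $\frac{1}{N}\sum_j\omegaa(V^j_\tau)$ (resp.\ $\frac{1}{N}\sum_j\omegaa(\overbar V^j_\tau)$), for which the $I_M(\tau)$ cutoff together with the growth bound $\CE(v)-\minobj\le C_2(1+\Nnormal{v}_2^2)$ gives the required deterministic lower bound.
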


A quantitative mean-field approximation was left as an open problem in~\cite[Remark 3.2]{carrillo2018analytical} \revisedTwo{due to a lack of global Lipschitz continuity of the SDE coefficients and approached since then in several steps, see Remark~\ref{rem:convergence_details}.
While} the restriction to bounded processes, which reflects the typical behavior in view of Lemma~\ref{lem:PBND}, \revisedTwo{already} allows to obtain an estimate of the type \eqref{eq:prop:MFL:mean-field approximation}, \revisedTwo{which is sufficient to} prove convergence in probability \revisedTwo{in what follows,
the recent work~\cite{gerber2023propagation} improves \eqref{eq:prop:MFL:mean-field approximation} by firstly showing a non-probabilistic mean-field approximation, i.e., removing the necessity of conditioning on the set $\Omega_M$ as done in \eqref{eq:prop:MFL:mean-field approximation}, and secondly by obtaining a pathwise estimate, see \cite[Theorem~2.6]{gerber2023propagation}.
Hence, in the light of \cite{gerber2023propagation}, the role of the constant $M$ can be regarded as merely an auxiliary technical tool.}

Equipped with Lemma~\ref{lem:PBND} and Proposition~\ref{prop:MFL}, we are now able to prove Theorem~\ref{thm:full_global_convergence}.

\begin{proof}[Proof of Theorem \ref{thm:full_global_convergence}]
	We have the error decomposition
	\revised{\begin{align} \label{eq:proof:thm:full_global_convergence:error_decomposition}
	\begin{split}
		&\bbE\left[\N{\frac{1}{N}\sum_{i=1}^N V_{K\Delta t}^i - \globmin}_2^2 \Bigg|\; \Omega_M\right]
		\leq
			3 \underbrace{\bbE\left[\N{\frac{1}{N}\sum_{i=1}^N \left(V_{K\Delta t}^i - V_{T}^i\right)}_2^2\Bigg|\; \Omega_M\right]}_{\!\!\!\substack{\leq C_{\mathrm{NA}}(\Delta t)^{2m} \text{ by applying classical convergence} \\ \text{results for numerical schemes for SDEs~\cite{platen1999introduction}}}\!\!\!} \\
			&\qquad\quad\,+3\underbrace{\bbE\left[\N{\frac{1}{N}\sum_{i=1}^N \left(V_{T}^i - \overbar{V}_{T}^i\right)}_2^2 \Bigg|\; \Omega_M\right]}_{\!\!\!\substack{\leq C_{\mathrm{MFA}}N^{-1} \text{ using the quantitative mean-field} \\ \text{approximation in form of Proposition~\ref{prop:MFL}}}\!\!\!}
			+\frac{3}{1-\delta} \underbrace{\bbE\N{\frac{1}{N}\sum_{i=1}^N \overbar{V}_{T}^i - \globmin}_2^2}_{\substack{\revisedTwo{\leq \bbE\N{\overbarscript{V}_{\!T}^1-\globmin\!}_2^2}\leq 2\CV(\rho_{T}) \leq 2\varepsilon \\ \text{by means of Theorem~\ref{thm:global_convergence_main}}}} \\
            &\quad\,\leq 6C_{\mathrm{NA}} (\Delta t)^{2m} + 3C_{\mathrm{MFA}} N^{-1} + 12\varepsilon
	\end{split}
	\end{align}}
	dividing the overall error into an approximation error of the numerical scheme, the mean-field approximation error and the optimization error in the mean-field limit.
 
	Denoting now by $K_{\varepsilon_\mathrm{total}}^{N}\subset\Omega$ the set, where \eqref{eq:thm:full_global_convergence:error} does not hold, we can estimate
	\begin{align*}
	\begin{split}
		\bbP\left(K_{\varepsilon_\mathrm{total}}^{N}\right)
		& = \bbP\left(K_{\varepsilon_\mathrm{total}}^{N}\cap\Omega_M\right) + \bbP\left(K_{\varepsilon_\mathrm{total}}^{N}\cap\Omega_M^c\right)\leq \bbP\left(K_{\varepsilon_\mathrm{total}}^{N}\big|\,\Omega_M\right)\bbP\left(\Omega_M\right) + \bbP\left(\Omega_M^c\right) \\
		& \leq \varepsilon_{\mathrm{total}}^{-1}\left(6C_{\mathrm{NA}} (\Delta t)^{2m} + 3C_{\mathrm{MFA}} N^{-1} + 12\varepsilon\right) + \delta,
	\end{split}
	\end{align*}
	where in the last step we employ 
	Markov's inequality together with \eqref{eq:proof:thm:full_global_convergence:error_decomposition} to bound the first term.
	For the second it suffices to choose the $M$ from~\eqref{eq:prop:MFL:OmegaM} large enough.
\end{proof}

As a consequence of Theorem~\ref{thm:global_convergence_main}, the hardness of any optimization problem is necessarily encoded in the mean-field approximation.
Proposition~\ref{prop:MFL} addresses precisely this question, ensuring that, with arbitrarily high probability, the finite particle dynamics~\eqref{eq:dyn_micro} keeps close to the mean-field dynamics~\eqref{eq:dyn_macro}.
Since the rate of this convergence is of order $N^{-1/2}$ in the number of particles~$N$, the hardness of the problem is fully captured by the constant~$C_{\mathrm{MFA}}$ in~\eqref{eq:prop:MFL:mean-field approximation}, which does not depend explicitly on the dimension~$d$.
Therefore, the mean-field approximation is, in general, not affected by the curse of dimensionality. 
Nevertheless, as our assumptions on the objective function~$\CE$ do not exclude the class of NP-hard problems, it cannot be expected that CBO solves \textit{any} problem, howsoever hard, with polynomial complexity.
This is reflected by the exponential dependence of~$C_{\mathrm{MFA}}$ on the parameter~$\alpha$ and its possibly worst-case linear dependence on the dimension~$d$, as we discuss in what follows.
However, several numerical experiments~\cite{carrillo2019consensus,fornasier2020consensus_sphere_convergence,fornasier2021anisotropic,fornasier2021convergence} in high dimensions confirm that in typical applications CBO performs comparably to state-of-the-art methods without the necessity of an exponentially large amount of particles.
As mentioned before, characterizing $\alpha_0$ in more detail is crucial in view of the mean-field approximation result, Proposition~\ref{prop:MFL}.
We did not precisely specify $\alpha_0$ in Theorem~\ref{thm:global_convergence_main} since it seems challenging to provide informative bounds in all generality.
In Remark~\ref{rem:alpha_0}, however, we devise an informal derivation in the case $H\equiv 1$ for objectives $\CE$ that are locally $L$-Lipschitz continuous on some ball $B_R(\globmin)$ and satisfy the coercivity condition~\eqref{eq:asm_icp_vstar} globally for $\nu = 1/2$.
For a parameter-dependent constant $c = c(\vartheta,\lambda,\sigma)$, we obtain
\begin{align} \label{eq:alpha_requirement_optimistic_foreshadow}
	\alpha
		> \alpha_0
		= \frac{-8}{c^2\eta^2\varepsilon}\log\left(\frac{c}{2\sqrt{2}}\,\rho_0\left(B_{\min\{R,\,c^2\eta^2\varepsilon/(8L)\}}(\globmin)\right)\right)
\end{align}
provided that the probability mass $t\mapsto \rho_t\left(B_{c^2\eta^2 \varepsilon/(8L)}(\globmin)\right)$ is minimized at time $t = 0$.
The latter assumption is motivated by numerical observations of typical successful CBO runs, where the particle density around the global minimizer tends to be minimized initially and steadily increases over time.
We note that the argument of the $\log$ in \eqref{eq:alpha_requirement_optimistic_foreshadow} may induce a dependence of $\alpha_0$ on the ambient dimension $d$, if we do not dispose of an informative initial configuration~$\rho_0$.
For instance, if $\rho_0$ is measure-theoretically equivalent to the Lebesgue measure on a compact set in $\bbR^d$, we have $\alpha_0 \in \CO(d\log(\varepsilon)/\varepsilon)$ as $d,1/\varepsilon\rightarrow \infty$ by \eqref{eq:alpha_requirement_optimistic_foreshadow}.
If we interpreted $\rho_0$ as the uncertainty about the location of the global minimizer $\globmin$, we could thus consider low-uncertainty regimes, where $\rho_0$ actually concentrates around $\globmin$ and $\alpha_0$ may be dimension-free, or a high-uncertainty regime, where $\rho_0$ does not concentrate and $\alpha_0$ may depend on $d$.

\section{Proof details for Section~\ref{subsec:main_result}} \label{sec:proof_main_theorem}
In this section we provide the proof details for the global convergence result of CBO in mean-field law, Theorem~\ref{thm:global_convergence_main}.
In Section~\ref{subsec:proof_sketch} we give a proof sketch to outline the main steps.
Sections~\ref{subsec:evolution_convex}--\ref{subsec:lower_bound_prob_mass} provide auxiliary results, which might be of independent interest.
In Section~\ref{subsec:proof_main} we complete the proof of Theorem~\ref{thm:global_convergence_main}.
Throughout we assume $\minobj = 0$, which is w.l.o.g.\@ since a constant offset to $\CE$ does not change the CBO dynamics.

\subsection{Proof sketch} \label{subsec:proof_sketch}
Let us first concentrate on the case $H \equiv 1$ and recall the definition of  the energy functional $\CV(\rho_t) = 1/2\int \N{v-\globmin}_2^2d\rho_t(v)$.
The main idea is to show that $\CV(\rho_t)$ satisfies the  differential inequality
\begin{align} \label{eq:proof_sketch_ODE_ineq}
	\frac{d}{dt}\CV(\rho_t)
	\leq -(1-\vartheta)\left(2\lambda-d\sigma^2\right)\CV(\rho_t)
\end{align}
until the time $T = T^*$ or until $\CV(\rho_T) \leq \varepsilon$.
In the case $T=T^*$, it is easy to check that the definition of $T^*$ in Theorem~\ref{thm:global_convergence_main} implies $\CV(\rho_{T^*}) \leq \varepsilon$, thus giving $\CV(\rho_T)\leq \varepsilon$ in either case.

The first step towards \eqref{eq:proof_sketch_ODE_ineq} is to derive a differential inequality for the evolution of $\CV(\rho_t)$ by using the dynamics of $\rho$.
Namely, by using the \eqref{eq:weak_solution_identity} with test function $\phi(v) = 1/2\N{v-\globmin}_2^2$, we derive in Lemma~\ref{lem:evolution_of_objective} the inequality
\begin{equation} \label{eq:proof_sketch_ODE_ineq_2}
\begin{aligned}
	\frac{d}{dt}\CV(\rho_t) \leq \,
	&-\left(2\lambda-d\sigma^2\right) \CV(\rho_t) + \sqrt{2}\left(\lambda+d\sigma^2\right) \sqrt{\CV(\rho_t)} \N{\conspoint{\rho_t}-\globmin}_2 \\
	&+ \frac{d\sigma^2}{2	} \N{\conspoint{\rho_t}-\globmin}_2^2.
\end{aligned}
\end{equation}
To find suitable bounds on the second and third term in \eqref{eq:proof_sketch_ODE_ineq_2}, we need to control the quantity~$\N{\conspoint{\rho_t}-\globmin}_2$.
This is achieved by the quantitative Laplace principle. Namely, under the inverse continuity property~\ref{asm:icp}, Proposition~\ref{lem:laplace_alt} in Section~\ref{subsec:quant_laplace} shows
\begin{align*}
	\N{\conspoint{\rho_t}-\globmin}_2\lesssim \ell(r) + \frac{\exp(-\alpha r)}{\rho_t(B_{r}(\globmin))},\quad \text{for sufficiently small } r > 0,
\end{align*}
where $\ell$ is a strictly positive but monotonically decreasing function with $\ell(r)\rightarrow 0$ as $r\rightarrow 0$.
As long as we can guarantee $\rho_t(B_{r}(\globmin)) > 0$, the choice
\begin{align*}
	\alpha > \frac{-\log(\rho_t(B_{r}(\globmin)))-\log(\ell(r))}{r}
\end{align*}
implies $\N{\conspoint{\rho_t}-\globmin}_2\lesssim \ell(r)$, which can be made arbitrarily small by suitable choices of~$r\ll 1$ and $\alpha \gg 1$.

Consequently, the last ingredient to the proof is to show $\rho_t(B_{r}(\globmin)) > 0$ for all $r > 0$.
To this end, we prove in Proposition~\ref{lem:lower_bound_probability} that the initial mass $\rho_0(B_{r}(\globmin)) > 0$ can decay at most at an exponential rate for any $r > 0$, but remains strictly positive in any finite time window $[0,T]$.
We emphasize that a key requirement to this result is an active Brownian motion term, i.e., $\sigma > 0$, that counteracts the deterministic movement of the drift term by inducing randomness.

The proof for the case $H\not \equiv 1$ differs slightly, because the evolution inequality~\eqref{eq:proof_sketch_ODE_ineq_2} contains an additional term involving $\N{\conspoint{\rho_t}-\globmin}_2$, see Lemma~\ref{lem:evolution_of_objective_H}.
The idea, however, stays the same and the remaining steps of the proof can be conducted in a similar fashion.

\subsection{Evolution of the mean-field limit} \label{subsec:evolution_convex}
We now derive evolution inequalities of the energy functional $\CV(\rho_t)$ for the cases~$H\equiv 1$ and $H\not \equiv 1$, respectively.

\begin{lemma} \label{lem:evolution_of_objective}
Let $\CE : \bbR^d \rightarrow \bbR$, $\cutoffnoarg \equiv 1$, and fix $\alpha,\lambda,\sigma > 0$.
Moreover, let $T>0$ and let $\rho \in \CC([0,T], \CP_4(\bbR^d))$ be a weak solution to the Fokker-Planck equation~\eqref{eq:fokker_planck}.
Then the functional $\CV(\rho_t)$ satisfies\vspace{-0.2cm}
\begin{align}
    \label{eq:evolution_of_objective}
\begin{aligned}
    \frac{d}{dt}\CV(\rho_t)
	\leq
	&-\left(2\lambda - d\sigma^2\right) \CV(\rho_t)
    + \sqrt{2}\left(\lambda + d\sigma^2\right) \sqrt{\CV(\rho_t)} \N{\conspoint{\rho_t}-\globmin}_2 \\
    &+ \frac{d\sigma^2}{2} \N{\conspoint{\rho_t}-\globmin}_2^2.
\end{aligned}
\end{align}
\end{lemma}

\begin{proof}
	We note that the function $\phi(v) = 1/2\Nnormal{v-\globmin}_2^2$ is in $\CC_*^2(\bbR^d)$ and recall that $\rho$ satisfies the weak solution identity~\eqref{eq:weak_solution_identity} for all test functions in $\CC_*^2(\bbR^d)$, see Remark~\ref{rem:test_functions_redefine}.
	By applying \eqref{eq:weak_solution_identity} with $\phi$ as above, we obtain for the evolution of $\CV(\rho_t)$
	\begin{align*}
		\frac{d}{dt} \CV(\rho_t)
		&= \underbrace{-\lambda \int \langle  v-\globmin, v-\conspoint{\rho_t}\rangle \,d\rho_t(v)}_{=:T_1} + \underbrace{\frac{d\sigma^2}{2} \int \N{v-\conspoint{\rho_t}}_2^2 d\rho_t(v)}_{=:T_2},
	\end{align*}
	where we used $\nabla \phi(v) = v-\globmin$ and $\Delta \phi(v) = d$.
	Expanding the right-hand side of the scalar product in the integrand of $T_1$ by subtracting and adding $\globmin$ yields
	\begin{align*}
		T_1
		&=-\lambda \int \langle v-\globmin, v-\globmin\rangle \,d\rho_t(v) + \lambda  \left\langle \int (v-\globmin) \,d\rho_t(v), \conspoint{\rho_t} - \globmin\right\rangle\\
		&\leq -2\lambda \CV(\rho_t) + \lambda \N{\bbE(\rho_t) - \globmin}_2 \N{\conspoint{\rho_t}-\globmin}_2
	\end{align*}
	with Cauchy-Schwarz inequality being used in the last step.
	Similarly, again by subtracting and adding $v^*$, for the term $T_2$ we have with Cauchy-Schwarz inequality
	\begin{align} \label{eq:bound_t2_aux}
	\begin{split}
		T_2
		& \leq d\sigma^2 \left(\CV(\rho_t) + \int \N{v-\globmin}_2 d\rho_t(v)\N{\conspoint{\rho_t}-\globmin}_2 + \frac{1}{2}\N{\conspoint{\rho_t}-\globmin}_2^2\right).
	\end{split}
	\end{align}
	The result now follows by noting that $\Nnormal{\bbE(\rho_t) - \globmin}_2 \leq \int \Nnormal{v-\globmin}_2 \,d\rho_t(v) \leq \sqrt{2\CV(\rho_t)}$ as a consequence of Jensen's inequality.
\end{proof}


\revised{\begin{lemma}
\label{lem:evolution_of_objective_lower}
    Under the assumptions of Lemma~\ref{lem:evolution_of_objective}, the functional $\CV(\rho_t)$ satisfies
    \begin{align}
        \label{eq:evolution_of_objective_lower}
        \frac{d}{dt}\CV(\rho_t)
        &\geq
        -\left(2\lambda - d\sigma^2\right) \CV(\rho_t)
        - \sqrt{2}\left(\lambda + d\sigma^2\right) \sqrt{\CV(\rho_t)} \N{\conspoint{\rho_t}-\globmin}_2.
\end{align}
\end{lemma}

\begin{proof}
    By following the lines of the proof of Lemma~\ref{lem:evolution_of_objective} and noticing that by Cauchy-Schwarz inequality it holds $\left\langle \int (v-\globmin) \,d\rho_t(v), \conspoint{\rho_t} - \globmin\right\rangle \geq -\N{\bbE(\rho_t) - \globmin}_2 \N{\conspoint{\rho_t}-\globmin}_2$ and since moreover $\N{\conspoint{\rho_t}-\globmin}_2^2\geq0$, the lower bound is immediate. 
\end{proof}}

\begin{lemma} \label{lem:evolution_of_objective_H}
	Let $\CE \in \CC(\bbR^d)$ satisfy \ref{asm:zero_global}--\ref{asm:local_lipschitz} \revised{and w.l.o.g.\@~assume $\minobj = 0$.}
	Let $\cutoffnoarg : \bbR^d \rightarrow [0,1]$ be such that $\cutoff{x} = 1$ whenever $x \geq 0$
	and fix $\alpha,\lambda,\sigma > 0$.
	Moreover, let $T>0$ and let $\rho \in \CC([0,T],\CP_4(\bbR^d))$ be a weak solution to the Fokker-Planck equation~\eqref{eq:fokker_planck}.
	Then, provided $\max_{t \in [0,T]}\CE(\conspoint{\rho_t}) \leq \CE_{\infty}$,
	the functional $\CV(\rho_t)$ satisfies
	\begin{align*}
		\frac{d}{dt} \CV(\rho_t) \leq
		& -\left(2\lambda-d\sigma^2\right)\CV(\rho_t) + \sqrt{2}\left(\lambda+d\sigma^2\right) \sqrt{\CV(\rho_t)} \N{\conspoint{\rho_t}-\globmin}_2 \\
		&+ \frac{\lambda}{\eta^2}  \big(L_{\CE}\left(1+\N{\conspoint{\rho_t}-\globmin}_2^{\gamma}\right)\N{\conspoint{\rho_t}-\globmin}_2\big)^{2\nu} +\frac{d\sigma^2}{2} \N{\conspoint{\rho_t}-\globmin}_2^2.
	\end{align*}
\end{lemma}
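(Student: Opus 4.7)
The plan is to replicate the strategy used for Lemma~\ref{lem:evolution_of_objective}: test the weak Fokker--Planck identity~\eqref{eq:weak_solution_identity} against $\phi(v) = \frac{1}{2}\Nnormal{v-\globmin}_2^2 \in \CC^2_*(\bbR^d)$ and split the result into a drift contribution $T_1$ and a diffusion contribution $T_2$. Because $H$ enters only the drift, the entire analysis of $T_2$ from the proof of Lemma~\ref{lem:evolution_of_objective} carries over unchanged and produces the terms $d\sigma^2\,\CV(\rho_t)$, $\sqrt{2}d\sigma^2\sqrt{\CV(\rho_t)}\Nnormal{\conspoint{\rho_t}-\globmin}_2$, and $\frac{d\sigma^2}{2}\Nnormal{\conspoint{\rho_t}-\globmin}_2^2$. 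All the new work therefore concerns the drift integral
\begin{equation*}
	T_1 = -\lambda\int \langle v-\globmin,\, v-\conspoint{\rho_t}\rangle H(\CE(v)-\CE(\conspoint{\rho_t}))\,d\rho_t(v).
\end{equation*}

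My first move is to use the algebraic identity $\langle v-\globmin, v-\conspoint{\rho_t}\rangle = \Nnormal{v-\globmin}_2^2 - \langle v-\globmin, \conspoint{\rho_t}-\globmin\rangle$ to split $T_1$ into a diagonal and a cross piece. The cross piece, bounded via $0\leq H\leq 1$, Cauchy--Schwarz, and Jensen's inequality, contributes at most $\sqrt{2}\lambda\sqrt{\CV(\rho_t)}\Nnormal{\conspoint{\rho_t}-\globmin}_2$, which combines with the corresponding term from $T_2$ to yield the $\sqrt{\CV(\rho_t)}$-term of the target bound with coefficient $\sqrt{2}(\lambda+d\sigma^2)$. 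For the diagonal piece I would write $H = 1 - (1-H)$, so that
\begin{equation*}
	-\lambda\int\Nnormal{v-\globmin}_2^2 H\,d\rho_t = -2\lambda\,\CV(\rho_t) + \lambda\int\Nnormal{v-\globmin}_2^2(1-H)\,d\rho_t,
\end{equation*}
concentrating all of the $H$-dependence in the single error term $E := \lambda\int\Nnormal{v-\globmin}_2^2(1-H)\,d\rho_t$.

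The main obstacle is controlling $E$, and this is precisely where assumptions~\ref{asm:icp} and~\ref{asm:local_lipschitz} enter. Because $H(x)=1$ whenever $x\geq 0$, the integrand of $E$ vanishes outside the set $S := \{v : \CE(v) < \CE(\conspoint{\rho_t})\}$. The hypothesis $\CE(\conspoint{\rho_t})\leq\CE_\infty$ then yields $\CE(v)-\minobj \leq \CE_\infty$ for every $v\in S$ (using Remark~\ref{rem:wlog_minobj_zero}), so the contrapositive of~\eqref{eq:asm_icp_farfield} forces $S\subset B_{R_0}(\globmin)$. On this ball the local coercivity~\eqref{eq:asm_icp_vstar} is available; combined with $\CE(v)\leq\CE(\conspoint{\rho_t})$ on $S$ and assumption~\ref{asm:local_lipschitz} applied to $\conspoint{\rho_t}$, it produces the uniform pointwise bound
\begin{equation*}
	\Nnormal{v-\globmin}_2^2 \leq \frac{1}{\eta^2}\big(L_{\CE}(1+\Nnormal{\conspoint{\rho_t}-\globmin}_2^{\gamma})\Nnormal{\conspoint{\rho_t}-\globmin}_2\big)^{2\nu} \quad \text{for all } v\in S.
\end{equation*}
Since $1-H\leq 1$ and $\rho_t(S)\leq 1$, integrating this estimate yields $E \leq \frac{\lambda}{\eta^2}\big(L_{\CE}(1+\Nnormal{\conspoint{\rho_t}-\globmin}_2^{\gamma})\Nnormal{\conspoint{\rho_t}-\globmin}_2\big)^{2\nu}$, which is exactly the new term in the target inequality. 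Summing the bounds on $T_1$ and $T_2$ then completes the proof.
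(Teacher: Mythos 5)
Your proposal is correct and follows essentially the same route as the paper's proof: testing with $\phi(v)=\tfrac12\N{v-\globmin}_2^2$, splitting the drift via $H=1-(1-H)$, bounding the cross term by Cauchy--Schwarz and Jensen, and controlling the error term by noting that $1-H\neq 0$ forces $\CE(v)<\CE(\conspoint{\rho_t})\leq\CE_\infty$, hence $v\in B_{R_0}(\globmin)$ by the contrapositive of \eqref{eq:asm_icp_farfield}, after which \eqref{eq:asm_icp_vstar} and \ref{asm:local_lipschitz} applied to $\conspoint{\rho_t}$ give exactly the $\frac{\lambda}{\eta^2}\big(L_{\CE}(1+\N{\conspoint{\rho_t}-\globmin}_2^{\gamma})\N{\conspoint{\rho_t}-\globmin}_2\big)^{2\nu}$ term. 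No gaps; the coefficients recombine to the stated inequality just as in the paper.
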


\begin{proof}
	Let us write $\cutoffnoarg^*(v) := \cutoff{\CE(v)-\CE(\conspoint{\rho_t})}$.
	Taking $\phi(v) = 1/2\Nnormal{v-\globmin}_2^2$ as test function in~\eqref{eq:weak_solution_identity} as in the proof of Lemma~\ref{lem:evolution_of_objective} yields for the evolution of $\CV(\rho_t)$
	\begin{align} \label{eq:H_initial_decomp_aux}
		\frac{d}{dt} \CV(\rho_t)
		&=  \underbrace{-\lambda \!\int\! H^*(v) \langle  v-\globmin, v-\conspoint{\rho_t}\rangle \,d\rho_t(v)}_{=:\widetilde T_1}
		+ \frac{d\sigma^2}{2}\!\int \!\N{v-\conspoint{\rho_t}}_2^2  \,d\rho_t(v).\!\!
	\end{align}
	For the second term on the right-hand side, we proceed as in Equation~\eqref{eq:bound_t2_aux}. The term $\widetilde T_1$ on the other hand can be rewritten as
	\begin{align} \label{eq:T1_initial_decomp}
	\begin{split}
		\widetilde T_1 
			= -2\lambda\CV(\rho_t)
			&- \lambda \int H^*(v) \langle v-\globmin, \globmin-\conspoint{\rho_t}\rangle \,d\rho_t(v) \\
			&+ \lambda\int (1-H^*(v)) \N{v-\globmin}_2^2 d\rho_t(v).
	\end{split}
	\end{align}
	Let us now bound the latter two terms individually. For the second term in \eqref{eq:T1_initial_decomp}, noting that $0\leq H^*\leq1$, Cauchy-Schwarz inequality and Jensen's inequality give
	\begin{align*}
		-\lambda\int H^*(v) \langle v-\globmin, \globmin-\conspoint{\rho_t}\rangle \,d\rho_t(v)
		&\leq \lambda \sqrt{2\CV(\rho_t)} \N{\conspoint{\rho_t}-\globmin}_2.
	\end{align*}
	For the third term in \eqref{eq:T1_initial_decomp}, let us first note that $(1-H^*(v)) \neq 0$ implies $H^*(v) \neq 1$ and thus $\CE(v)<\CE(\conspoint{\rho_t})$.
	Furthermore, $\CE(\conspoint{\rho_t}) \leq \CE_{\infty}$ implies $v \in B_{R_0}(\globmin)$ by the second part of \ref{asm:icp}.
	By the first part of \ref{asm:icp} and $0\leq1-H^*\leq1$, we therefore have
	\begin{align*}
		\lambda\int(1-H^*(v)) \N{v-\globmin}_2^2 \,d\rho_t(v)
		&\leq \lambda \int \frac{(1-H^*(v))}{\eta^2}\CE(v)^{2\nu} \,d\rho_t(v)
		\leq  \frac{\lambda}{\eta^2}\CE(\conspoint{\rho_t})^{2\nu} \\
		&\leq \frac{\lambda}{\eta^2} \big(L_{\CE}\left(1+\N{\conspoint{\rho_t}-\globmin}_2^{\gamma}\right)\N{\conspoint{\rho_t}-\globmin}_2\big)^{2\nu},
	\end{align*}
	where the last step used \ref{asm:local_lipschitz}.
    Employing the last two inequalities in \eqref{eq:T1_initial_decomp} and inserting the result together with \eqref{eq:bound_t2_aux} into \eqref{eq:H_initial_decomp_aux}, gives the result.
\end{proof}

\revised{\begin{lemma}
\label{lem:evolution_of_objective_H_lower}
    Under the assumptions of Lemma~\ref{lem:evolution_of_objective_H}, the functional $\CV(\rho_t)$ satisfies
    \begin{align}
        \label{eq:evolution_of_objective_H_lower}
    \begin{aligned}
        \frac{d}{dt} \CV(\rho_t) \geq
		& -\left(2\lambda-d\sigma^2\right)\CV(\rho_t) - \sqrt{2}\left(\lambda+d\sigma^2\right) \sqrt{\CV(\rho_t)} \N{\conspoint{\rho_t}-\globmin}_2.
    \end{aligned}
    \end{align}
\end{lemma}

\begin{proof}
    Analogously to the proof of Lemma~\ref{lem:evolution_of_objective_lower}, by following the lines of the proof of Lemma~\ref{lem:evolution_of_objective_H} and noticing that for $\widetilde{T}_1$ it holds $-\int H^*(v) \langle v-\globmin, \globmin-\conspoint{\rho_t}\rangle \,d\rho_t(v) \geq -\N{\bbE(\rho_t) - \globmin}_2 \N{\conspoint{\rho_t}-\globmin}_2$ as well as $\int (1-H^*(v)) \N{v-\globmin}_2^2 \,d\rho_t(v) \geq0$ as a consequence of $0\leq H^*\leq1$, the lower bound is immediate.
\end{proof}}

\subsection{Quantitative Laplace principle}
\label{subsec:quant_laplace}
The Laplace principle \eqref{eq:laplace_principle} asserts that $-\log(\Nnormal{\omegaa}_{L_1(\indivmeasure)})/\alpha \rightarrow \minobj$ as $\alpha\rightarrow \infty$ as long as the global minimizer $\globmin$ is in the support of $\indivmeasure$.
However, it cannot be used to characterize the proximity of $\conspoint{\indivmeasure}$ to the global minimizer $v^*$ in general. 
For instance, if $\CE$ had two minimizers with similar objective value $\minobj$, and half of the probability mass of $\indivmeasure$ concentrates around each associated location, $\conspoint{\indivmeasure}$ is located halfway on the line that connects the two minimizing locations.
\revised{The} inverse continuity property~\ref{asm:icp}, \revised{by design,} excludes such cases, so that we can refine the Laplace principle under \ref{asm:icp} in the following sense.

\begin{proposition} \label{lem:laplace_alt}
Let $\indivmeasure \in \CP(\bbR^d)$ and fix $\alpha  > 0$.
For any $r > 0$ define $\CE_{r} := \sup_{v \in B_{r}(\globmin)}\CE(v)$.
Then, under the inverse continuity property~\ref{asm:icp} and \revised{assuming w.l.o.g.\@~$\minobj = 0$,} for any $r \in (0,R_0]$ and $q > 0$  such that $q + \CE_{r} \leq \CE_{\infty}$, we have
\begin{align*}
\N{\conspoint{\indivmeasure} - \globmin}_2 \leq \frac{(q + \CE_{r})^\nu}{\eta} + \frac{\exp(-\alpha q)}{\indivmeasure(B_{r}(\globmin))}\int\N{v-\globmin}_2d\indivmeasure(v).
\end{align*}
\end{proposition}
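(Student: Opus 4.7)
The plan is to start from the identity $\conspoint{\indivmeasure} - \globmin = \int (v-\globmin)\,\omegaa(v)/\N{\omegaa}_{L_1(\indivmeasure)}\,d\indivmeasure(v)$, move the norm inside the integral by Jensen's inequality, and then split $\bbR^d$ according to the sublevel set $\Omega_q := \{v \in \bbR^d : \CE(v) \leq q + \CE_r\}$. The two parameters $r$ and $q$ play complementary roles: $r$ controls how small a ball around $\globmin$ we concentrate on (making the bound from \ref{asm:icp} good), while $q$ controls how much of an energetic ``gap'' we impose outside $\Omega_q$ (making the Gibbs weight outside exponentially small).

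For the $\Omega_q$-contribution, the key observation is that the assumption $q + \CE_r \leq \CE_\infty$ combined with the farfield condition \eqref{eq:asm_icp_farfield} forces $\Omega_q \subseteq B_{R_0}(\globmin)$: if $v \notin B_{R_0}(\globmin)$ then $\CE(v) > \CE_\infty \geq q + \CE_r$. Hence \eqref{eq:asm_icp_vstar} applies pointwise on $\Omega_q$ and yields $\N{v-\globmin}_2 \leq (q + \CE_r)^\nu/\eta$, which directly gives the first term of the claimed bound (since the weight integrates to one).

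For the $\Omega_q^c$-contribution, I would bound the numerator and denominator of the normalized weight separately. On $\Omega_q^c$ one has $\omegaa(v) \leq \exp(-\alpha(q+\CE_r))$ by definition of $\Omega_q$. For the denominator, I use that $B_r(\globmin) \subseteq \Omega_q$ (because $\CE(v) \leq \CE_r$ on this ball) to write
\begin{equation*}
\N{\omegaa}_{L_1(\indivmeasure)} \geq \int_{B_r(\globmin)} \exp(-\alpha \CE(v))\,d\indivmeasure(v) \geq \exp(-\alpha \CE_r)\,\indivmeasure(B_r(\globmin)).
\end{equation*}
The ratio of these two bounds is exactly $\exp(-\alpha q)/\indivmeasure(B_r(\globmin))$, and after bounding $\int_{\Omega_q^c}\N{v-\globmin}_2\,d\indivmeasure(v)$ by the full integral over $\bbR^d$, the second term of the claim appears.

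There is no real obstacle here; the whole argument is a careful bookkeeping of a Laplace-type splitting. The only subtlety worth flagging is the compatibility of the two conditions $r \leq R_0$ and $q + \CE_r \leq \CE_\infty$, which ensures that the inverse continuity bound from \ref{asm:icp} is applicable everywhere on $\Omega_q$ and that the pointwise estimate used on this set is vacuous neither below nor above. The resulting inequality is nonasymptotic and makes transparent the interplay between $\alpha$ and the initial mass $\indivmeasure(B_r(\globmin))$ that drives the later convergence analysis.
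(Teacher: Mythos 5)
Your proposal is correct, and it takes a genuinely different decomposition from the paper's. The paper splits the integral defining $\conspoint{\indivmeasure}-\globmin$ over the Euclidean ball $B_{\widetilde r}(\globmin)$ and its complement, where $\widetilde r := (q+\CE_r)^{\nu}/\eta$ is an auxiliary radius; it then has to verify that $\widetilde r \ge r$ and $\widetilde r \le \CE_\infty^\nu/\eta$ before applying \ref{asm:icp} to relate $\inf_{v\in(B_{\widetilde r}(\globmin))^c}\CE(v)$ back to $q+\CE_r$. You instead split directly by the energy sublevel set $\Omega_q = \{\CE \le q+\CE_r\}$, which makes both halves of the argument essentially immediate: on $\Omega_q$ the farfield condition plus $q+\CE_r\le\CE_\infty$ force containment in $B_{R_0}(\globmin)$ so that \eqref{eq:asm_icp_vstar} applies pointwise, and on $\Omega_q^c$ the pointwise bound $\omegaa \le e^{-\alpha(q+\CE_r)}$ requires no geometry at all. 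The Markov-type lower bound on $\N{\omegaa}_{L_1(\indivmeasure)}$ via the ball $B_r(\globmin)$ is common to both proofs. Your version eliminates the bookkeeping around $\widetilde r$ and makes the mechanism by which \ref{asm:icp} enters more transparent; the paper's version has the modest virtue that the decomposition sets are balls, which is closer in spirit to the classical Laplace-method picture. Both yield exactly the stated inequality.
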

\begin{proof}
	For any $a > 0$ it holds $\Nnormal{\omegaa}_{L_1(\indivmeasure)} \geq a \indivmeasure(\{v : \exp(-\alpha \CE(v)) \geq a\})$ due to Markov's inequality.
	By choosing $a = \exp(-\alpha\CE_{r})$ and noting that
	\begin{align*}
		\indivmeasure\left(\left\{v \in \bbR^d: \exp(-\alpha \CE(v)) \geq \exp(-\alpha\CE_{r})\right\}\right)
		&= \indivmeasure\left(\left\{ v \in \bbR^d: \CE(v) \leq \CE_{r} \right\}\right) \geq \indivmeasure(B_{r}(\globmin)),
	\end{align*}
	we get $\N{\omegaa}_{L_1(\indivmeasure)} \geq \exp(-\alpha \CE_{r})\indivmeasure(B_{r}(\globmin))$.
	Now let $\widetilde r \geq r > 0$.
	Using the definition of the consensus point $\conspoint{\indivmeasure} = \int v \omegaa(v)/\Nnormal{\omegaa}_{L_1(\indivmeasure)}\,d\indivmeasure(v)$ we can decompose
	\begin{align*}
		\N{\conspoint{\indivmeasure} \!-\! \globmin}_2
		&\leq \int_{B_{\widetilde r}(\globmin)} \!\!\N{v\!-\!\globmin}_2\!\frac{\omegaa(v)}{\N{\omegaa}_{L_1(\indivmeasure)}}\,d\indivmeasure(v) + \int_{\left(B_{\widetilde r}(\globmin)\right)^c} \!\!\N{v\!-\!\globmin}_2\!\frac{\omegaa(v)}{\N{\omegaa}_{L_1(\indivmeasure)}}\,d\indivmeasure(v).
	\end{align*}
	The first term is bounded by $\widetilde r$ since $ \Nnormal{v-\globmin}_2\leq \widetilde r$ for all $v \in B_{\widetilde r}(\globmin)$.
	For the second term we use $\N{\omegaa}_{L_1(\indivmeasure)} \geq \exp(-\alpha \CE_{r})\indivmeasure(B_{r}(\globmin))$ from above to get
	\begin{align*}
		&\int_{\left(B_{\widetilde r}(\globmin)\right)^c} \!\!\!\!\!\N{v\!-\!\globmin}_2\!\frac{\omegaa(v)}{\N{\omegaa}_{L_1(\indivmeasure)}}\,d\indivmeasure(v)\!
		\leq\! \frac{1}{ \exp(-\alpha \CE_{r})\indivmeasure(B_{r}(\globmin))}\!\int_{(B_{\widetilde r}(\globmin))^c} \!\!\!\!\!\N{v\!-\!\globmin}_2 \omegaa(v)\,d\indivmeasure(v)\\
		&\qquad\qquad\qquad\qquad\leq\! \frac{\exp\left(-\alpha \left(\inf_{v \in (B_{\widetilde r}(\globmin))^c}\CE(v) \!-\! \CE_{r}\right)\right)}{\indivmeasure(B_{r}(\globmin))}\!\int\!\N{v\!-\!\globmin}_2d\indivmeasure(v).
	\end{align*}
	Thus, for any $\widetilde r \geq r > 0$ we obtain
	\begin{align} \label{eq:aux_laplace_1}
		\N{\conspoint{\indivmeasure} - \globmin}_2
		\leq \widetilde r+ \frac{\exp\left(-\alpha \left(\inf_{v \in (B_{\widetilde r}(\globmin))^c}\CE(v) - \CE_{r}\right)\right)}{\indivmeasure(B_{r}(\globmin))}\int\N{v-\globmin}_2d\indivmeasure(v).
	\end{align}
	Let us now choose $\widetilde r = (q+\CE_r)^{\nu}/\eta$.
	This choice satisfies $\widetilde r \leq \CE_{\infty}^{\nu}/\eta$ by the assumption $q+\CE_r \leq \CE_{\infty}$, and furthermore $\widetilde r  \geq r$, since \ref{asm:icp} with $\minobj = 0$ and $r \leq R_0$ implies
	\begin{align*}
		\widetilde r
		= \frac{(q+\CE_r)^{\nu}}{\eta}
		\geq \frac{\CE_r^{\nu}}{\eta}
		= \frac{\left(\sup_{v \in B_{r}(\globmin)}\CE(v)\right)^{\nu}}{\eta} \geq \sup_{v \in B_{r}(\globmin)}\N{v-\globmin}_2
		= r.
	\end{align*}
    Thus, using again \ref{asm:icp} with $\minobj = 0$, $\inf_{v \in (B_{\widetilde r}(\globmin))^c}\CE(v) - \CE_{r} \geq \min\big\{\CE_{\infty}, (\eta \widetilde r)^{1/\nu}\big\} - \CE_{r} = (\eta \widetilde r)^{1/\nu} - \CE_{r} = q$.
	Inserting this and the definition of $\widetilde r$ into \eqref{eq:aux_laplace_1}, we obtain the result.
\end{proof}

\subsection{A lower bound for the probability mass  around $v^*$} \label{subsec:lower_bound_prob_mass}
In this section we bound the probability mass $\rho_t(B_{r}(\globmin))$ for an arbitrary small radius $r > 0$ from below.
By defining a smooth mollifier $\phi_r : \bbR^d \rightarrow [0,1]$ with $\supp\phi_r = B_{r}(\globmin)$ according to
\begin{align} \label{eq:mollifier}
	\phi_{r}(v) :=
	\begin{cases}
		\exp\left(1-\frac{r^2}{r^2-\N{v-\globmin}_2^2}\right),& \text{ if } \N{v-\globmin}_2 < r,\\
		0,& \text{ else,}
	\end{cases}
\end{align}
it holds $\rho_t(B_{r}(\globmin)) \!\geq\! \int \phi_r(v)\,d\rho_t(v)$.
From there, the evolution of the right-hand side can be studied by using the weak solution property of $\rho$ as in Definition~\ref{def:fokker_planck_weak_sense}, since $\phi_{r} \in \CC_c^{\infty}(\bbR^d)$.

To do so, we compute the derivatives
\begin{align}
	\label{eq:mollifier_gradient}
	\nabla \phi_{r}(v) &= -2r^2 \frac{v-\globmin}{\left(r^2-\N{v-\globmin}_2^2\right)^2}\phi_{r}(v),\\
	\label{eq:mollifier_laplace}
	\Delta \phi_{r}(v) &= 2r^2 \left(\frac{2\left(2\N{v-\globmin}_2^2-r^2\right)\N{v-\globmin}_2^2 - d\left(r^2-\N{v-\globmin}_2^2\right)^2}{\left(r^2-\N{v-\globmin}_2^2\right)^4}\right)\phi_{r}(v).
\end{align}

\begin{proposition} \label{lem:lower_bound_probability}
	Let $\cutoffnoarg: \bbR \rightarrow [0,1]$ be arbitrary, $T > 0,\ r > 0$, and fix parameters $\alpha,\lambda,\sigma > 0$.
	Assume $\rho\in\CC([0,T],\CP(\bbR^d))$ weakly solves the Fokker-Planck equation~\eqref{eq:fokker_planck} in the sense of
	Definition~\ref{def:fokker_planck_weak_sense} with initial condition $\rho_0 \in \CP(\bbR^d)$ and for $t \in [0,T]$.
	Then, for all $t\in[0,T]$ we have
	\begin{align} 
		\rho_t\left(B_{r}(\globmin)\right)
		&\geq \left(\int\phi_{r}(v)\,d\rho_0(v)\right)\exp\left(-pt\right), \text{ where} \label{eq:lower_bound_probability_rate} \\
        p &:= \max\left\{\frac{2\lambda(\sqrt{c}r+B)\sqrt{c} }{(1-c)^2r}+\frac{2\sigma^2(cr^2+B^2)(2c+d)}{(1-c)^4r^2},\frac{4\lambda^2}{(2c-1)\sigma^2}\right\}
        \label{eq:def_p}
	\end{align}
	for any $B<\infty$ with $\sup_{t \in [0,T]}\N{\conspoint{\rho_{t}}-v^*}_2 \leq B$
	and for any $c \in (1/2,1)$ satisfying
	\begin{align} \label{eq:def_c}
		(2c-1)c \geq d(1-c)^2.
	\end{align}
\end{proposition}

\begin{remark} \label{remark:lem:lower_bound_probability}
	Let us comment in what follows on two technical details of Proposition~\ref{lem:lower_bound_probability}.
	\begin{enumerate}[label=(\roman*),labelsep=10pt,leftmargin=35pt]
        \item Note that neither the definition of $\conspoint{\rho_{t}}$ nor $\globmin$ play a significant role in the proof.
			The same result holds for an arbitrary $v \in \bbR^d$ so that $\sup_{t \in [0,T]}\Nnormal{\conspoint{\rho_{t}}-v}_2 \leq B < \infty$ and for arbitrary continuous maps $u \in \CC([0,T], \bbR^d)$ to replace $\conspoint{\rho_{t}}$ as long as $\rho$ weakly solves the Fokker-Planck equation~\eqref{eq:fokker_planck} with $u$ in place of $\conspoint{\rho_{t}}$.	
        \item In case the reader may have wondered about the crucial role of the stochastic terms in \eqref{eq:dyn_micro_discrete} and \eqref{eq:dyn_micro}, or the diffusion in the macroscopic models~\eqref{eq:dyn_macro} and~\eqref{eq:fokker_planck},  Proposition~\ref{lem:lower_bound_probability} precisely explains where positive diffusion $\sigma>0$ is actually used to ensure mass around the minimizer $\globmin$ (compare Proposition \ref{lem:laplace_alt}).
			We require $\sigma > 0$ in Proposition~\ref{lem:lower_bound_probability} to ensure a finite decay rate $q < \infty$, see the definition in Equation \eqref{eq:lower_bound_probability_rate}.
			Intuitively, we can understand the measure $\rho_t$ as having a deterministic component, which evolves according to the drift term in the Fokker-Planck equation~\eqref{eq:fokker_planck} and whose associated mass may leave $B_r(\globmin)$ in finite time, convolved with an exponentially decaying kernel from the diffusion term.
			This convolution ensures that the mass leaves at most exponentially fast, leading to the lower bound.
			The statement does not hold in general in the case $\sigma = 0$.
    \end{enumerate}
\end{remark}

\begin{proof}[Proof of Proposition~\ref{lem:lower_bound_probability}]
	By definition of the mollifier $\phi_{r}$ in \eqref{eq:mollifier} we have $0\leq \phi_{r}(v) \leq 1$ and $\supp(\phi_{r}) = B_{r}(\globmin)$.
	This implies
	\begin{align} \label{eq:bound_massaroundv*_initial}
		\rho_t\left(B_{r}(\globmin)\right)
		= \rho_t\left(\left\{v \in \bbR^d: \N{v-v^*}_2\leq r\right\}\right)
		\geq \int \phi_{r}(v)\,d\rho_t(v).
	\end{align}
	Our strategy is to derive a lower bound for the right-hand side of this inequality.
	Using the weak solution property of $\rho$ and the fact that  $\phi_{r}\in \CC^{\infty}_c(\bbR^d)$, we obtain
	\begin{align} \label{eq:initial_evolution}
		\frac{d}{dt}\int\phi_{r}(v)\,d\rho_t(v)
		&= \int \left(T_1(v) + T_2(v)\right) d\rho_t(v)
	\end{align}
    with $T_1(v) := -\lambda\cutoffnoarg^*(v) \langle v-\conspoint{\rho_t}, \nabla \phi_{r}(v)\rangle$ and $T_2(v) := \frac{\sigma^2}{2}\N{v-\conspoint{\rho_t}}_2^2 \Delta \phi_{r}(v)$,
	and where we abbreviate $\cutoffnoarg^*(v) := \cutoff{\CE(v)-\CE(\conspoint{\rho_t})}$ to keep the notation concise.
	We now aim for showing $T_1(v) + T_2(v) \geq -p\phi_r(v)$ uniformly on $\bbR^d$ for $p>0$ as given in~\eqref{eq:def_p} in the statement.
	Since the mollifier~$\phi_r$ and its first and second derivatives vanish outside of $\Omega_r := \{v \in \bbR^d : \Nnormal{v-\globmin}_2 < r\}$ we can restrict our attention to the open ball $\Omega_r$.
	To achieve the lower bound over $\Omega_r$, we introduce the subsets
    $K_1 := \left\{v \in \bbR^d : \N{v-v^*}_2 > \sqrt{c}r\right\}$
	and
    \begin{align*}
	\begin{aligned}
		&K_2 := \left\{v \in \bbR^d : -\lambda\cutoffnoarg^*(v)\langle v-\conspoint{\rho_t},v-v^*\rangle \left(r^2-\N{v-v^*}_2^2\right)^2\right. \\
		&\qquad\qquad\qquad\qquad\qquad\left. > \tilde{c}r^2\frac{\sigma^2}{2}\N{v-\conspoint{\rho_t}}_2^2\N{v-v^*}_2^2\right\},
	\end{aligned}
	\end{align*}
	where $c$ adheres to \eqref{eq:def_c}, and $\tilde{c} := 2c-1\in(0,1)$.\!
	We now decompose $\Omega_r$ according to
    $\Omega_r = \left(K_1^c \cap \Omega_r\right)\cup \left(K_1 \cap K_2^c \cap \Omega_r\right) \cup \left(K_1 \cap K_2 \cap \Omega_r\right)$,
	which is illustrated in Figure~\ref{fig:decomposition_support} for different positions of $\conspoint{\rho_t}$ and values of $\sigma$.
	\begin{figure}[!ht]
		\centering
		\subcaptionbox{\footnotesize Decomposition for $\conspoint{\rho_t}\in\Omega_r$ and $\sigma=0.2$\label{fig:decomposition1}}{\includegraphics[width=0.25\textwidth, trim=119 252 123 250,clip]{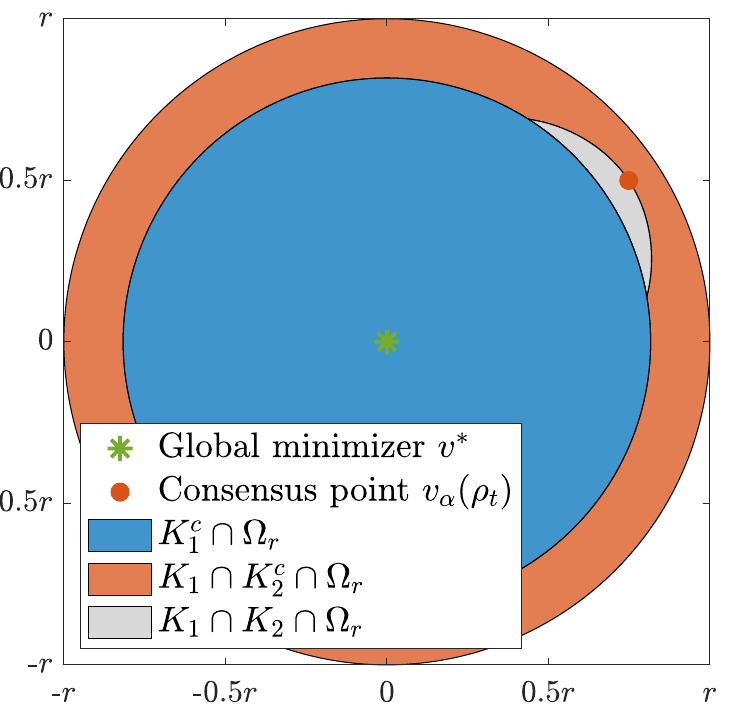}}
		\hspace{1.5em}
		\subcaptionbox{\footnotesize Decomposition for $\conspoint{\rho_t}\not\in\Omega_r$ and $\sigma=0.2$\label{fig:decomposition3}}{\includegraphics[width=0.25\textwidth, trim=119 252 123 250,clip]{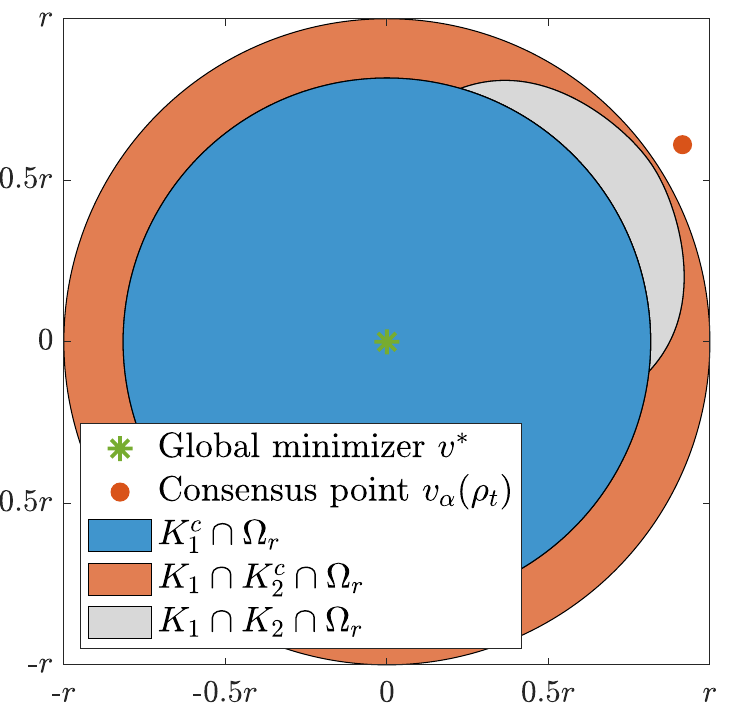}}
		\hspace{1.5em}
		\subcaptionbox{\footnotesize Decomposition for $\conspoint{\rho_t}\not\in\Omega_r$ and $\sigma=1$\label{fig:decomposition2}}{\includegraphics[width=0.25\textwidth, trim=119 252 123 250,clip]{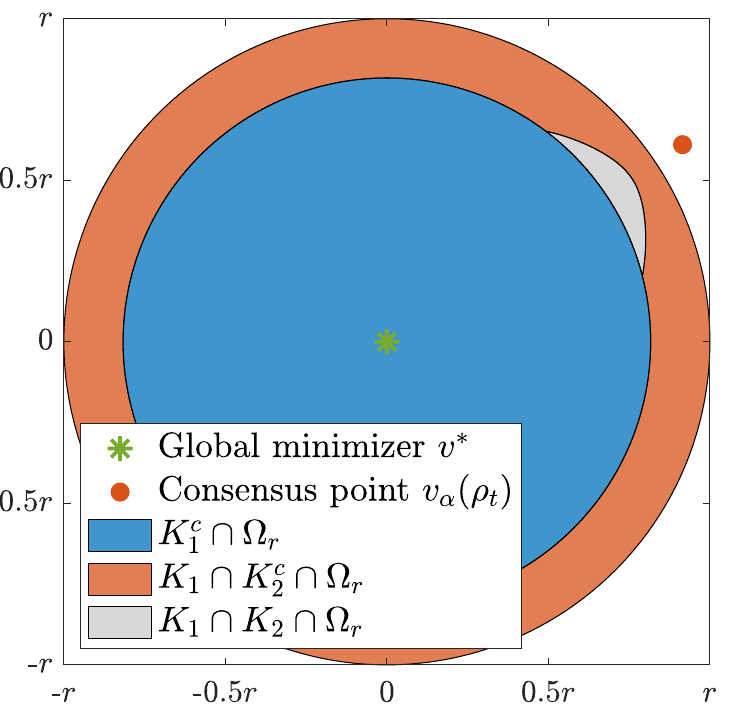}}
		\caption{Visualization of the decomposition of $\Omega_r$ for different positions of $\conspoint{\rho_t}$ and values of $\sigma$ in the setting $\cutoffnoarg\equiv1$.
		In the proof of Proposition~\ref{lem:lower_bound_probability} we limit the rate of the mass loss induced by both consensus drift and noise term for the set $K_1^c \cap \Omega_r$, which is colored blue.
		On the set $K_1 \cap K_2^c \cap \Omega_r$, inked orange, the noise term counterbalances any potential mass loss induced by the drift, while on the gray set $K_1 \cap K_2 \cap \Omega_r$ mass can be lost at an exponential rate~$-4\lambda^2/((2c-1)\sigma^2)$.}
		\label{fig:decomposition_support}
	\end{figure}

	\noindent
	In the following we treat each of these three subsets separately.

	\noindent
	\textbf{Subset $K_1^c \cap \Omega_r$:}
	We have $\Nnormal{v-v^*}_2 \leq \sqrt{c}r$ for each $v \in K_1^c$, which can be used to independently derive lower bounds for both $T_1$ and $T_2$.
	Recalling the expression for~$\phi_r$ from~\eqref{eq:mollifier}, for $T_1$ we get by using Cauchy-Schwarz inequality and $H^*\leq1$
	\begin{align*}
		T_1(v)
		&= -\lambda\cutoffnoarg^*(v) \langle v-\conspoint{\rho_t}, \nabla \phi_{r}(v)\rangle
		= - \lambda\cutoffnoarg^*(v) \left\langle v-\conspoint{\rho_t},\frac{-2r^2(v-\globmin)\phi_{r}(v)}{\left(r^2-\N{v-\globmin}_2^2\right)^2}\right\rangle \\
		&\geq -2 r^2 \lambda \frac{\N{v-\conspoint{\rho_t}}_2\N{v-v^*}_2}{\left(r^2-\N{v-\globmin}_2^2\right)^2}\phi_r(v)
		\geq - \frac{2\lambda(\sqrt{c}r+B)\sqrt{c} }{(1-c)^2r}\phi_r(v) =: -p_1\phi_r(v),
	\end{align*}
	where the last bound is due to $\Nnormal{v-\conspoint{\rho_t}}_2 \leq \N{v-v^*}_2+\Nnormal{v^*-\conspoint{\rho_t}}_2\leq \sqrt{c}r+B$.
	Similarly, by computing $\Delta \phi_{r}$ and inserting it, for $T_2$ we obtain
	\begin{align*}
		T_2(v)
		\!&=\! \sigma^2r^2 \N{v-\conspoint{\rho_t}}_2^2 \frac{2\left(2\N{v-\globmin}_2^2\!-\!r^2\right)\N{v-\globmin}_2^2\!-\!d\left(r^2-\N{v-\globmin}_2^2\right)^2}{\left(r^2-\N{v-\globmin}_2^2\right)^4}\phi_{r}(v)\\
		&\geq - \frac{2\sigma^2(cr^2+B^2)(2c+d)}{(1-c)^4r^2}\phi_{r}(v) =: -p_2\phi_r(v),
	\end{align*}
	where we used $\N{v-\conspoint{\rho_t}}_2^2 \leq 2\big(\!\N{v-v^*}_2^2+\N{v^*-\conspoint{\rho_t}}_2^2\!\big)\leq 2(cr^2+B^2)$.

	\noindent
	\textbf{Subset $K_1 \cap K_2^c \cap \Omega_r$:}
	By the definition of $K_1$ and $K_2$ we have $\Nnormal{v-\globmin}_2 > \sqrt{c} r$ and
	\begin{align} \label{eq:vinK2c}
		-\lambda\cutoffnoarg^*(v)\langle v\!-\!\conspoint{\rho_t},v\!-\!v^*\rangle \left(r^2\!-\!\N{v\!-\!v^*}_2^2\right)^2
		\!\leq \tilde{c}r^2\frac{\sigma^2}{2}\N{v\!-\!\conspoint{\rho_t}}_2^2\N{v\!-\!v^*}_2^2.
	\end{align}
	Our goal now is to show $T_1(v)+T_2(v) \geq 0$ for all $v$ in this subset.
	We first compute
	\begin{align*}
		&\frac{T_1(v) + T_2(v)}{2r^2\phi_r(v)}
		= \,\lambda \cutoffnoarg^*(v)\frac{\langle v-\conspoint{\rho_t}, v-\globmin\rangle\left(r^2-\N{v-\globmin}_2^2\right)^2}{\left(r^2-\N{v-\globmin}_2^2\right)^4}\\
		&\qquad\qquad+ \frac{\sigma^2}{2}\N{v-\conspoint{\rho_t}}_2^2\frac{2\left(2\N{v-\globmin}_2^2-r^2\right)\N{v-\globmin}_2^2 - d\left(r^2-\N{v-\globmin}_2^2\right)^2}{\left(r^2-\N{v-\globmin}_2^2\right)^4}.
	\end{align*}
	Therefore we have $T_1(v) + T_2(v) \geq 0$ whenever we can show
	\begin{equation} \label{eq:aux_term_3}
	\begin{aligned}
		&\left(-\lambda\cutoffnoarg^*(v) \langle v-\conspoint{\rho_t}, v-\globmin\rangle + \frac{d\sigma^2}{2} \N{v-\conspoint{\rho_t}}_2^2\right)\left(r^2-\N{v-\globmin}_2^2\right)^2 \\
		&\qquad\qquad\qquad\qquad\qquad\quad\;\; \leq \sigma^2 \N{v-\conspoint{\rho_t}}_2^2\left(2\N{v-\globmin}_2^2-r^2\right)\N{v-\globmin}_2^2.
	\end{aligned}
	\end{equation}
	Now note that the first summand on the left-hand side in \eqref{eq:aux_term_3} can be upper bounded by means of Condition \eqref{eq:vinK2c} and by using the relation $\tilde{c} = 2c-1$.
	More precisely,
	\begin{align*}
		&-\!\lambda\cutoffnoarg^*(v) \langle v\!-\!\conspoint{\rho_t}, v\!-\!\globmin\rangle \left(r^2\!-\!\N{v\!-\!\globmin}_2^2\right)^2
		\!\!\leq\! \tilde{c}r^2\frac{\sigma^2}{2}\N{v\!-\!\conspoint{\rho_t}}_2^2\N{v\!-\!v^*}_2^2\\
		&\ =\! (2c\!-\!1)r^2\frac{\sigma^2}{2}\N{v\!-\!\conspoint{\rho_t}}_2^2\N{v\!-\!v^*}_2^2
		\!\leq\!\! \left(2\N{v\!-\!\globmin}_2^2-r^2\right)\frac{\sigma^2}{2}\N{v\!-\!\conspoint{\rho_t}}_2^2\N{v\!-\!v^*}_2^2,
	\end{align*}
	where the last inequality follows since $v\in K_1$.
	For the second term on the left-hand side in \eqref{eq:aux_term_3} we can use $d(1-c)^2 \leq (2c-1)c$ as per \eqref{eq:def_c}, to get
	\begin{align*}
		&\frac{d\sigma^2}{2} \N{v-\conspoint{\rho_t}}_2^2 \left(r^2-\N{v-\globmin}_2^2\right)^2
		\leq \frac{d\sigma^2}{2} \N{v-\conspoint{\rho_t}}_2^2 (1-c)^2r^4 \\
		&\quad\leq \frac{\sigma^2}{2}\! \N{v-\conspoint{\rho_t}}_2^2 (2c-1)r^2cr^2
		\leq \frac{\sigma^2}{2}\! \N{v-\conspoint{\rho_t}}_2^2 \left(2\N{v-\globmin}_2^2-r^2\right)\!\N{v-\globmin}_2^2.
	\end{align*}
	Hence, \eqref{eq:aux_term_3} holds and we have $T_1(v) + T_2(v) \geq 0$ uniformly on this subset.

	\noindent
	\textbf{Subset $K_1 \cap K_2 \cap \Omega_r$:}
	On this subset, we have $\Nnormal{v-\globmin}_2 > \sqrt{c}r$ and
	\begin{align} \label{eq:vinK2}
		-\lambda\cutoffnoarg^*(v)\langle v\!-\!\conspoint{\rho_t},v\!-\!v^*\rangle \left(r^2\!-\!\N{v\!-\!v^*}_2^2\right)^2 \!> \tilde{c}r^2\frac{\sigma^2}{2}\N{v\!-\!\conspoint{\rho_t}}_2^2\N{v\!-\!v^*}_2^2.
	\end{align}
	We first note that $T_1(v) = 0$ whenever $\sigma^2\Nnormal{v-\conspoint{\rho_t}}_2^2 = 0$, provided that $\sigma>0$, so nothing needs to be done for the point $v = \conspoint{\rho_t}$.
	On the other hand, if $\sigma^2\Nnormal{v-\conspoint{\rho_t}}_2^2 > 0$, we can use $\cutoffnoarg^*\leq1$, two applications of Cauchy-Schwarz inequalities, and Condition \eqref{eq:vinK2} to get
	\begin{align*}
	\begin{split}
		\frac{\cutoffnoarg^*(v)\left\langle v-\conspoint{\rho_t},v-v^*\right\rangle}{\left(r^2-\N{v-v^*}_2^2\right)^2}
		&\geq \frac{-\N{v-\conspoint{\rho_t}}_2\N{v-v^*}_2}{\left(r^2-\N{v-v^*}_2^2\right)^2}\\
		&> \frac{2\lambda\cutoffnoarg^*(v)\langle v-\conspoint{\rho_t},v-v^*\rangle}{\tilde{c}r^2\sigma^2\N{v-\conspoint{\rho_t}}_2\N{v-v^*}_2}
		\geq -\frac{2\lambda}{\tilde{c}r^2\sigma^2}.
	\end{split}
	\end{align*}
	Using this, $T_1$ can be bounded from below by
	\begin{align*}
		T_1(v)
		&\!=\! 2\lambda r^2\cutoffnoarg^*(v) \left\langle v\!-\!\conspoint{\rho_t}, \frac{v\!-\!\globmin}{\left(r^2\!-\!\N{v\!-\!\globmin}_2^2\right)^2}\phi_{r}(v)\right\rangle
		\geq -\frac{4\lambda^2}{\tilde{c}\sigma^2}\phi_{r}(v)
        =: -p_3\phi_r(v),
	\end{align*}
	where we made use of the relation $\tilde{c} = 2c-1$ in the last step.
	For $T_2$, we note that the nonnegativity of $\sigma^2\Nnormal{v-\conspoint{\rho_t}}_2$ implies $T_2(v) \geq 0$, whenever
	\begin{align*}
		2\left(2\N{v-\globmin}_2^2-r^2\right)\N{v-\globmin}_2^2\geq d\left(r^2-\N{v-\globmin}_2^2\right)^2.
	\end{align*}
	This is satisfied for all $v$ with $\Nnormal{v-\globmin}_2 \geq \sqrt{c}r$, provided $c$ satisfies $2(2c-1)c \geq (1-c)^2d$ as implied by \eqref{eq:def_c}.

	\noindent
	\textbf{Concluding the proof:}
	Using the evolution of $\phi_r$ as in \eqref{eq:initial_evolution}, we now get
	\begin{alignat}{2}
		\phantom{\frac{d}{dt}}
		&\begin{aligned}[c]\nonumber
			\mathllap{\frac{d}{dt}}
			&\int\phi_{r}(v)\,d\rho_t(v) =
			 \int_{K_1 \cap K_2^c \cap \Omega_r}\underbrace{(T_1(v) + T_2(v))}_{\geq 0}\,d\rho_t(v) \\
			&\qquad\qquad + \int_{K_1 \cap K_2 \cap \Omega_r}\underbrace{(T_1(v) + T_2(v))}_{\geq -p_3\phi_{r}(v)}\,d\rho_t(v) + \int_{K_1^c \cap \Omega_r}\underbrace{(T_1(v) + T_2(v))}_{\geq -(p_1+p_2)\phi_{r}(v)}d\rho_t(v)
		\end{aligned} \\
		&\qquad\qquad\begin{aligned}[c]\nonumber
			\mathllap{\geq}
			& -\max\left\{p_1+p_2,p_3\right\} \int\phi_{r}(v)\,d\rho_t(v) = -p \int\phi_{r}(v)\,d\rho_t(v)
		\end{aligned}
	\end{alignat}
	An application of Gr\"onwall's inequality gives $\int\phi_{r}(v)\,d\rho_t(v)\!\geq\! \int\phi_{r}(v)\,d\rho_0(v)\exp(-pt)$, which concludes the proof after recalling~\eqref{eq:bound_massaroundv*_initial}.
\end{proof}

\subsection{Proof of Theorem~\ref{thm:global_convergence_main}} \label{subsec:proof_main}
We now have all necessary tools at hand to present a detailed proof of the global convergence result in mean-field law.
We separately prove the cases of an inactive and active cutoff function, i.e., $H\equiv1$ and $H\not\equiv1$,~respectively.

\revised{\begin{proof}[Proof of Theorem~\ref{thm:global_convergence_main} when $H\equiv 1$]
	W.l.o.g.\@~we may assume $\minobj = 0$.
	Let us first choose the parameter~$\alpha$ such that
	\begin{align} \label{eq:alpha}
	\begin{split}
		\alpha>
		\alpha_0
		:= \frac{1}{q_\varepsilon}\Bigg(\log\left(\frac{4\sqrt{2\CV(\rho_0)}}{c\left(\vartheta,\lambda,\sigma\right)\sqrt{\varepsilon}}\right) 
		&+ \frac{p}{(1-\vartheta)\left(2\lambda-d\sigma^2\right)}\log\left(\frac{\CV(\rho_0)}{\varepsilon}\right) \\
        &- \log\rho_0\big(B_{r_\varepsilon/2}(v^*)\big)\!\Bigg),
	\end{split}
	\end{align}
	where we introduce the definitions
	\begin{align}
		c\left(\vartheta,\lambda,\sigma\right)
		:= \min\left\{
			\frac{\vartheta}{2}\frac{\left(2\lambda-d\sigma^2\right)}{\sqrt{2}\left(\lambda+d\sigma^2\right)}, 
			\sqrt{\vartheta\frac{\left(2\lambda-d\sigma^2\right)}{d\sigma^2}}
			\right\} 
	\end{align}
	as well as
	\begin{align*}
		q_\varepsilon := \frac{1}{2}\min\bigg\{\!\left(\eta\frac{c\left(\vartheta,\lambda,\sigma\right)\sqrt{\varepsilon}}{2}\right)^{1/\nu}\!,\CE_{\infty}\bigg\}
		\quad \text{and} \quad
		r_\varepsilon :=\max_{s \in [0,R_0]}\left\{\max_{v \in B_s(\globmin)}\CE(v) \leq q_\varepsilon\right\}.
	\end{align*}
	Moreover, $p$ is as defined in \eqref{eq:def_p} in Proposition~\ref{lem:lower_bound_probability} with $B=c(\vartheta,\lambda,\sigma)\sqrt{\CV(\rho_0)}$ and with $r=r_\varepsilon$.
	We remark that, by construction, $q_\varepsilon>0$ and $r_\varepsilon\leq R_0$.
	Furthermore, recalling the notation $\CE_{r}=\sup_{v \in B_{r}(\globmin)}\CE(v)$ from Proposition~\ref{lem:laplace_alt}, we have $q_\varepsilon+\CE_{r_\varepsilon} \leq 2q_\varepsilon \leq \CE_{\infty}$ as a consequence of the definition of~$r_\varepsilon$.
	Since $q_\varepsilon>0$, the continuity of $\CE$ ensures that there exists $s_{q_\varepsilon}>0$ such that $\CE(v)\leq q_\varepsilon$ for all $v\in B_{s_{q_\varepsilon}}(\globmin)$, thus yielding also $r_\varepsilon>0$.
	
	Let us now define the time horizon $T_\alpha \geq 0$, which may depend on $\alpha$, by
	\begin{align} \label{eq:endtime_T}
		T_\alpha := \sup\big\{t\geq0 : \CV(\rho_{t'}) > \varepsilon \text{ and } \N{\conspoint{\rho_{t'}}-\globmin}_2 < C(t') \text{ for all } t' \in [0,t]\big\}
	\end{align}
	with $C(t):=c(\vartheta,\lambda,\sigma)\sqrt{\CV(\rho_t)}$.
	Notice for later use that $C(0)=B$.
	
    Our aim now is to show $\CV(\rho_{T_\alpha}) = \varepsilon$ with $T_\alpha\in\big[\frac{1-\vartheta}{(1+\vartheta/2)}\;\!T^*,T^*\big]$ and that we have at least exponential decay of $\CV(\rho_t)$ until time $T_\alpha$, i.e., until accuracy~$\varepsilon$ is reached.
	
	First, however, we ensure that $T_\alpha>0$.
    With the mapping~$t\mapsto\CV(\rho_{t})$ being continuous as a consequence of the regularity $\rho\in\CC([0,T], \CP_4(\bbR^d))$ established in Theorem~\ref{thm:well-posedness_FP} and $t\mapsto\N{\conspoint{\rho_{t}}-\globmin}_2$ being continuous due to \cite[Lemma~3.2]{carrillo2018analytical} and $\rho\in\CC([0,T], \CP_4(\bbR^d))$, 
    $T_\alpha>0$ follows from the definition, since $\CV(\rho_{0}) > \varepsilon$ and $\N{\conspoint{\rho_{0}}-\globmin}_2 < C(0)$.
	While the former is immediate by assumption, applying Proposition~\ref{lem:laplace_alt} with $q_\varepsilon$ and $r_\varepsilon$
    gives the latter since
	\begin{align*} 
		\N{\conspoint{\rho_{0}} - \globmin}_2
		&\leq \frac{\left(q_\varepsilon+\CE_{r_\varepsilon}\right)^\nu}{\eta} + \frac{\exp\left(-\alpha q_\varepsilon\right)}{\rho_0(B_{r_\varepsilon}(v^*))}\int\N{v-\globmin}_2d\rho_{0}(v)\\
		&\leq \frac{c\left(\vartheta,\lambda,\sigma\right)\sqrt{\varepsilon}}{2} + \frac{\exp\left(-\alpha q_\varepsilon\right)}{\rho_0(B_{r_\varepsilon}(v^*))}\sqrt{2\CV(\rho_0)}\\
		&\leq c\left(\vartheta,\lambda,\sigma\right)\sqrt{\varepsilon}
		< c\left(\vartheta,\lambda,\sigma\right)\sqrt{\CV(\rho_0)} = C(0),
	\end{align*}
	where the first inequality in the last line holds by the choice of $\alpha$ in \eqref{eq:alpha}.
	
	Next, we show that the functional $\CV(\rho_t)$ decays essentially exponentially fast in time.
    More precisely, we prove that, up to time $T_\alpha$, $\CV(\rho_t)$ decays
    \begin{enumerate}[label=(\roman*),labelsep=10pt,leftmargin=35pt]
        \item at least exponentially fast (with rate $(1-\vartheta)(2\lambda-d\sigma^2)$), and \label{enumerate:proof:atleastexpdecay}
        \item at most exponentially fast (with rate $(1+\vartheta/2)(2\lambda-d\sigma^2)$).\label{enumerate:proof:atmostexpdecay}
    \end{enumerate}
	To obtain \ref{enumerate:proof:atleastexpdecay}, recall that Lemma~\ref{lem:evolution_of_objective} provides an upper bound on $\frac{d}{dt}\CV(\rho_t)$ given by
	\begin{equation}
	\begin{aligned} \label{eq:main_aux_1_no_H}
		\frac{d}{dt}\CV(\rho_t) \leq\,
		& -\left(2\lambda-d\sigma^2\right) \CV(\rho_t) + \sqrt{2}\left(\lambda+d\sigma^2\right) \sqrt{\CV(\rho_t)} \N{\conspoint{\rho_t}-\globmin}_2 \\
		&+ \frac{d\sigma^2}{2} \N{\conspoint{\rho_t}-\globmin}_2^2.
	\end{aligned}
	\end{equation}
	Combining this with the definition of $T_\alpha$ in \eqref{eq:endtime_T} we have by construction 
	\begin{align*}
		\frac{d}{dt}\CV(\rho_t)
		\leq -(1-\vartheta)\left(2\lambda-d\sigma^2\right)\CV(\rho_t)
		\quad \text{ for all } t \in (0,T_\alpha).
	\end{align*}
    Analogously, for \ref{enumerate:proof:atmostexpdecay}, by Lemma~\ref{lem:evolution_of_objective_lower}, we obtain a lower bound on $\frac{d}{dt}\CV(\rho_t)$ of the form
    \begin{align*}
    \begin{aligned}
        \frac{d}{dt}\CV(\rho_t)
        &\geq
        -\left(2\lambda - d\sigma^2\right) \CV(\rho_t)
        - \sqrt{2}\left(\lambda + d\sigma^2\right) \sqrt{\CV(\rho_t)} \N{\conspoint{\rho_t}-\globmin}_2 \\
        &\geq -(1+\vartheta/2)\left(2\lambda - d\sigma^2\right) \CV(\rho_t)
        \quad \text{ for all } t \in (0,T_\alpha),
    \end{aligned}
    \end{align*}
    where the second inequality again exploits the definition of $T_\alpha$.
	Gr\"onwall's inequality now implies for all $t \in [0,T_\alpha]$ the upper and lower bound
	\begin{align}
		\CV(\rho_t)
		&\leq \CV(\rho_0) \exp\left(- (1-\vartheta)\left(2\lambda-d\sigma^2\right) t\right), \label{eq:evolution_J_no_H}\\
		\CV(\rho_t)
		&\geq \CV(\rho_0) \exp\left(- (1+\vartheta/2)\left(2\lambda-d\sigma^2\right) t\right), \label{eq:evolution_J_no_H_lower}
	\end{align}
    thereby proving \ref{enumerate:proof:atleastexpdecay} and \ref{enumerate:proof:atmostexpdecay}.
	We further note that the definition of $T_\alpha$ in \eqref{eq:endtime_T} together with the definition of $C(t)$ and \eqref{eq:evolution_J_no_H} permits to control
	\begin{align}
		&\max_{t \in [0,T_\alpha]}\N{\conspoint{\rho_t}-\globmin}_2
		\leq \max_{t \in [0,T_\alpha]} C(t)\leq C(0).
		\label{eq:max_bound_distance_no_H}
	\end{align}

    To conclude, it remains to prove that $\CV(\rho_{T_\alpha}) = \varepsilon$ with $T_\alpha\in\big[\frac{1-\vartheta}{(1+\vartheta/2)}\;\!T^*,T^*\big]$.
    For this we distinguish the following three cases.
	
	\noindent
	\textbf{Case $T_\alpha \geq T^*$:}
	We can use the definition of $T^*$ in \eqref{eq:end_time_star_statement} and the time-evolution bound of $\CV(\rho_t)$ in \eqref{eq:evolution_J_no_H} to conclude that $\CV(\rho_{T^*}) \leq \varepsilon$.
	Hence, by definition of $T_\alpha$ in \eqref{eq:endtime_T} together with the continuity of $\CV(\rho_t)$, we find $\CV(\rho_{T_\alpha}) =\varepsilon$ with $T_\alpha = T^*$.
	
	\noindent
	\textbf{Case $T_\alpha < T^*$ and $\CV(\rho_{T_\alpha}) \leq \varepsilon$:}
	By continuity of $\CV(\rho_t)$, it holds for $T_\alpha$ as defined in \eqref{eq:endtime_T}, $\CV(\rho_{T_\alpha}) = \varepsilon$.
    Thus, $\varepsilon
        = \CV(\rho_{T_\alpha})
        \geq \CV(\rho_0) \exp\left(- (1+\vartheta/2)\left(2\lambda-d\sigma^2\right) T_\alpha\right)$ by \eqref{eq:evolution_J_no_H_lower}, which can be reordered as
    \begin{align*}
        \frac{1-\vartheta}{(1+\vartheta/2)} \, T^*
        =\frac{1}{(1+\vartheta/2)\left(2\lambda-d\sigma^2\right)}\log\left(\frac{\CV(\rho_0)}{\varepsilon}\right)
        \leq T_\alpha
        < T^*.
    \end{align*}
	
	\noindent
	\textbf{Case $T_\alpha < T^*$ and $\CV(\rho_{T_\alpha}) > \varepsilon$:}
	We shall show that this case can never occur by verifying that $\N{\conspoint{\rho_{T_\alpha}}-\globmin}_2 < C(T_\alpha)$ due to the choice of $\alpha$ in~\eqref{eq:alpha}.
	In fact, fulfilling simultaneously both $\CV(\rho_{T_\alpha})>\varepsilon$ and $\N{\conspoint{\rho_{T_\alpha}}-\globmin}_2 < C(T_\alpha)$ would contradict the definition of $T_\alpha$ in \eqref{eq:endtime_T} itself.
	To this end, by applying again Proposition~\ref{lem:laplace_alt} with $q_\varepsilon$ and $r_\varepsilon$,
    and recalling that $\varepsilon<\CV(\rho_{T_\alpha})$, we get
	\begin{align} \label{eq:proof_contradiction_1}
	\begin{split}
		\N{\conspoint{\rho_{T_\alpha}}-\globmin}_2
		&\leq \frac{\left(q_\varepsilon+\CE_{r_\varepsilon}\right)^\nu}{\eta} + \frac{\exp\left(-\alpha q_\varepsilon\right)}{\rho_{T_\alpha}\big(B_{r_\varepsilon}(v^*)\big)}\int\N{v-\globmin}_2d\rho_{T_\alpha}(v)\\
		&< \frac{c\left(\vartheta,\lambda,\sigma\right)\sqrt{\CV(\rho_{T_\alpha})}}{2} + \frac{\exp\left(-\alpha q_\varepsilon\right)}{\rho_{T_\alpha}\big(B_{r_\varepsilon}(v^*)\big)}\sqrt{2\CV(\rho_{T_\alpha})}.
	\end{split}
	\end{align}
	Since, thanks to \eqref{eq:max_bound_distance_no_H}, we have the bound $\max_{t \in [0,T_\alpha]}\Nnormal{\conspoint{\rho_t}-\globmin}_2 \leq B$ for $B=C(0)$, which is in particular independent of $\alpha$, Proposition~\ref{lem:lower_bound_probability} guarantees that there exists a $p>0$ not depending on $\alpha$ (but depending on $B$ and $r_\varepsilon$) with
	\begin{align*}
		\rho_{T_\alpha}(B_{r_\varepsilon}(v^*))
		\geq \left(\int \phi_{r_\varepsilon}(v) \,d\rho_0(v)\right)\exp(-pT_\alpha)
		\geq \frac{1}{2}\,\rho_0\left(B_{r_\varepsilon/2}(v^*)\right) \exp(-pT^*) 
		> 0,
	\end{align*}
	where we used $\globmin\in\supp(\rho_0)$ for bounding the initial mass $\rho_0$ and the fact that $\phi_{r}$ (as defined in Equation~\eqref{eq:mollifier}) is bounded from below on $B_{r/2}(\globmin)$ by $1/2$.
	With this we can continue the chain of inequalities in~\eqref{eq:proof_contradiction_1} to obtain
	\begin{align} \label{eq:proof_contradiction_2}
	\begin{split}
		\N{\conspoint{\rho_{T_\alpha}}\!-\!\globmin}_2
		&< \frac{c\left(\vartheta,\lambda,\sigma\right)\!\sqrt{\CV(\rho_{T_\alpha})}}{2} \!+\! \frac{2\exp\left(-\alpha q_\varepsilon\right)}{\rho_0\big(B_{r_\varepsilon/2}(v^*)\big) \exp(-pT^*)}\sqrt{2\CV(\rho_{T_\alpha})}\\
		&\leq c\left(\vartheta,\lambda,\sigma\right)\sqrt{\CV(\rho_{T_\alpha})}
		= C(T_\alpha),
	\end{split}
	\end{align}
	where the first inequality in the last line holds by the choice of $\alpha$ in \eqref{eq:alpha}.
	This establishes the desired contradiction, again as consequence of the continuity of the mappings~$t\mapsto\CV(\rho_{t})$ and~$t\mapsto\N{\conspoint{\rho_{t}}-\globmin}_2$.
\end{proof}}

We now consider the case of an active cutoff function~$\cutoffnoarg$ with $\cutoffnoarg(x) \!=\! 1$ whenever~\mbox{$x \!\geq\! 0$}.

\revised{\begin{proof}[Proof of Theorem~\ref{thm:global_convergence_main} when $H\not \equiv 1$]
	The proof follows the lines of the one for the inactive cutoff $H\equiv 1$, but requires some modifications since Lemmas~\ref{lem:evolution_of_objective} and~\ref{lem:evolution_of_objective_lower} need to be replaced by Lemmas~\ref{lem:evolution_of_objective_H} and~\ref{lem:evolution_of_objective_H_lower}, to derive bounds for the evolution of $\CV(\rho_t)$.
	
	As in the proof for the case $H\equiv1$ we first choose the parameter~$\alpha$ such that
	\begin{align} \label{eq:alpha_Hnot1}
	\begin{split}
		\alpha>
		\alpha_0
		:= \frac{1}{q_\varepsilon}\Bigg(\log\left(\frac{4\sqrt{2\CV(\rho_0)}}{C_\varepsilon}\right) 
		&+ \frac{p}{(1-\vartheta)\left(2\lambda-d\sigma^2\right)}\log\left(\frac{\CV(\rho_0)}{\varepsilon}\right) \\
		&- \log\rho_0\big(B_{r_\varepsilon/2}(v^*)\big)\!\Bigg),
	\end{split}
	\end{align}
	where $C_\varepsilon$ is obtained when replacing with $\varepsilon$ each $\CV(\rho_t)$ in $C(t)$ defined as
	\begin{equation} \label{eq:C_Hnot1}
	\begin{split}
		C(t) :=
		\min\Bigg\{
			&\frac{\CE_{\infty}}{2L_{\CE}}, \!
			\left(\frac{\CE_{\infty}}{2L_{\CE}}\right)^{\!1/(1+\gamma)}\!, 
			\frac{\vartheta}{4}\frac{\left(2\lambda-d\sigma^2\right)}{\sqrt{2}\left(\lambda+d\sigma^2\right)}\sqrt{\CV(\rho_t)},
			\sqrt{\frac{\vartheta}{2}\frac{\left(2\lambda-d\sigma^2\right)}{d\sigma^2}\CV(\rho_t)},\\
			&\!\!\!\!\left(\frac{\vartheta}{4}\frac{\eta^2}{L_{\CE}^{2\nu}}\frac{\left(2\lambda-d\sigma^2\right)}{\lambda}\CV(\rho_t)\right)^{\!1/(2\nu)}\!,
			\left(\frac{\vartheta}{4}\frac{\eta^2}{L_{\CE}^{2\nu}}\frac{\left(2\lambda-d\sigma^2\right)}{\lambda}\CV(\rho_t)\right)^{\!1/(2\nu(1+\gamma))}\!\Bigg\}.
	\end{split}
	\end{equation}
	Moreover, $r_\varepsilon$ is as defined before, $p$ as in \eqref{eq:def_p} with $B=C(0)$ and $r=r_\varepsilon$, and
	\begin{align*}
		q_\varepsilon := \frac{1}{2}\min\bigg\{\!\left(\eta\frac{C_\varepsilon}{2}\right)^{1/\nu}\!,\CE_{\infty}\bigg\}.
	\end{align*}
	
	Let us now define again a time horizon~$T_\alpha$ according to \eqref{eq:endtime_T}, however with the modified definition of $C(t)$ from \eqref{eq:C_Hnot1}.
	It is straightforward to check that $T_\alpha>0$ by choice of $\alpha$ in \eqref{eq:alpha_Hnot1}.
    Our aim is again to show $\CV(\rho_{T_\alpha}) = \varepsilon$ with $T_\alpha\in\big[\frac{1-\vartheta}{(1+\vartheta/2)}\;\!T^*,T^*\big]$ and that we have at least exponential decay of $\CV(\rho_t)$ until $T_\alpha$.
	
	Since due to Assumption~\ref{asm:local_lipschitz} and with the definition of $C(t)$ in \eqref{eq:C_Hnot1} it holds
	\begin{align} \label{eq:max_bound_distance_H}
		&\max_{t \in [0,T_\alpha]}\CE(\conspoint{\rho_t})
		\leq \max_{t \in [0,T_\alpha]}L_{\CE}(1+\N{\conspoint{\rho_t} - \globmin}_2^{\gamma})\N{\conspoint{\rho_t} - \globmin}_2
		\leq \CE_{\infty},
	\end{align}
	Lemmas~\ref{lem:evolution_of_objective_H} and \ref{lem:evolution_of_objective_H_lower} provide an upper and a lower bound for the time derivative of $\CV(\rho_t)$, which, when being combined with the definitions of $T_\alpha$ and $C(t)$ in \eqref{eq:C_Hnot1}, yield
 	\begin{align*}
		\frac{d}{dt}\CV(\rho_t)
		\leq -\left(1\!-\!\vartheta\right)\left(2\lambda\!-\!d\sigma^2\right)\CV(\rho_t)
		\quad
        \text{and}
        \quad
		\frac{d}{dt}\CV(\rho_t)
        \geq -\left(1\!+\!\vartheta/2\right)\left(2\lambda\!-\!d\sigma^2\right)\CV(\rho_t)
	\end{align*}
    for all $t \in (0,T_\alpha)$ as before.
	We can thus follow the lines of the proof for the case $H\equiv1$, since also here $C(t)$ is bounded.
	In particular, the choice of $\alpha$ in \eqref{eq:alpha_Hnot1} allows to derive the contradiction $\N{\conspoint{\rho_{T_\alpha}}-\globmin}_2 < C(T_\alpha)$ by employing Propositions~\ref{lem:laplace_alt}~and~\ref{lem:lower_bound_probability}.
\end{proof}}

\begin{remark}[Informal lower bound for $\alpha_0$] \label{rem:alpha_0}
	As mentioned in Section~\ref{subsec:convergence_probability}, insightful lower bounds on the required $\alpha_0$ in Theorem \ref{thm:global_convergence_main} may be interesting in view of better understanding the convergence of the microscopic system \eqref{eq:dyn_micro} to the mean-field limit~\eqref{eq:dyn_macro}.
	Let us therefore informally derive in what follows an instructive lower bound on the required $\alpha_0$ under the assumption that~$\CE$ satisfies Condition~\ref{asm:icp} globally with $\nu = 1/2$ and that $\CE$ is locally $L$-Lipschitz continuous around $\globmin$, i.e., in some ball $B_{R}(\globmin)$.
	We restrict ourselves to the case of an inactive cutoff function~$H\equiv 1$.
	
	Recalling \eqref{eq:main_aux_1_no_H} in the proof of Theorem~\ref{thm:global_convergence_main}, $\alpha$ should be large enough to ensure
	\begin{align} \label{eq:inequality_aux}
		\N{\conspoint{\rho_{t}}-\globmin}_2
		&\leq c\left(\vartheta,\lambda,\sigma\right)\sqrt{\CV(\rho_{t})} \quad \text{ for all } t \in [0,T],
	\end{align}
	where $T$ is the time satisfying $\CV(\rho_{T}) = \varepsilon$.
	To achieve this, we recall that for $\indivmeasure \in \CP(\bbR^d)$ the quantitative Laplace principle in Proposition~\ref{lem:laplace_alt} with choices $q_\varepsilon := c\left(\vartheta,\lambda,\sigma\right)^2\eta^2\varepsilon/8$ and $r_\varepsilon:=\min\{R,q_\varepsilon/L\}$ for $q$ and $r$, respectively, yields
	\begin{align*}
		\N{\conspoint{\indivmeasure} - \globmin}_2
		\leq \frac{\sqrt{2q_\varepsilon}}{\eta} + \frac{\exp\left(-\alpha q_\varepsilon\right)}{\indivmeasure(B_{r_\varepsilon}(\globmin))}\int\N{v-\globmin}_2d\indivmeasure(v)
	\end{align*}
	provided that \ref{asm:icp} holds globally with $\nu=1/2$ and that $\CE$ is $L$-Lipschitz continuous on some ball $B_{R}(\globmin)$.
	It remains to choose $\alpha
		> \alpha_0$, where
	\begin{align} \label{eq:alpha_requirement}
		\alpha_0
		\!:=\! \sup_{t \in [0,T]}\frac{-8}{c\left(\vartheta,\lambda,\sigma\right)^2\eta^2\varepsilon}\log\left(\frac{c\left(\vartheta,\lambda,\sigma\right)}{2\sqrt{2}}\,\rho_t\!\left(B_{\min\{R,\,c\left(\vartheta,\lambda,\sigma\right)^2\eta^2\varepsilon/(8L)\}}(\globmin)\right)\!\right),
	\end{align}
	suggesting that $\alpha_0$ is strongly related to the time-evolution of the probability mass of $\rho_t$ around $\globmin$.
	Recalling Proposition \ref{lem:lower_bound_probability}, this mass adheres to the lower bound
	\begin{align*}
		\rho_t(B_{r}(\globmin))
		\geq \rho_0(B_{r/2}(\globmin))\exp(-pt)/2
		\quad \text{ for some } p > 0 \text{ and any } r > 0.
	\end{align*}
	However, this result is pessimistic due to its worst-case nature, and inserting it into \eqref{eq:alpha_requirement} with the corresponding $p$ as in \eqref{eq:def_p} leads to overly stringent requirements on $\alpha_0$, which are reflected by the respective second summands in~\eqref{eq:alpha} and \eqref{eq:alpha_Hnot1}.
	Rather, a successful application of the CBO method entails that the probability mass around the global minimizer increases over time, so that  $t\mapsto \rho_t(B_{r}(\globmin))$ is typically minimized at $t = 0$.
	In such case, the lower bound~\eqref{eq:alpha_requirement} becomes
	\begin{align} \label{eq:alpha_requirement_optimistic}
		\alpha_0
		= \frac{-8}{c\left(\vartheta,\lambda,\sigma\right)^2\eta^2\varepsilon}\log\left(\frac{c\left(\vartheta,\lambda,\sigma\right)}{2\sqrt{2}}\,\rho_0\!\left(B_{\min\{R,\,c\left(\vartheta,\lambda,\sigma\right)^2\eta^2\varepsilon/(8L)\}}(\globmin)\right)\!\right).
	\end{align}
\end{remark}

\section{Proof details for Section~\ref{subsec:convergence_probability}} \label{sec:proof_probabilistic_MFA}
In this section we provide the proof details for the result about the mean-field approximation of CBO, Proposition~\ref{prop:MFL}.
After giving the proof of the auxiliary Lemma~\ref{lem:PBND}, which ensures that the dynamics is to some extent bounded, we prove Proposition~\ref{prop:MFL}.

\begin{proof}[Proof of Lemma \ref{lem:PBND}]
	By combining the ideas of \cite[Lemma~3.4]{carrillo2018analytical} with a Doob-like inequality, we derive a bound for $\bbE\sup_{t\in[0,T]}\frac{1}{N}\sum_{i=1}^N \!\max\left\{\Nnormal{V_t^i}_2^4+\Nnormal{\overbar{V}_t^i}_2^4\right\}$, which ensures that $\empmeasure{t},\overbar\rho^N_t\in\CP_4(\bbR^d)$ with high probability.
	Here, $\overbar\rho^N$ denotes the empirical measure associated with the processes~$(\overbar{V}^i)_{i=1,\dots,N}$.

	\noindent
	Employing standard inequalities shows
	\begin{align} \label{eq:proof:prop:MF:EsupVi}
	\begin{split}
		\bbE \sup_{t\in[0,T]}\N{V_t^i}_2^{4}
		\lesssim 
			\bbE \N{V_0^i}_2^{4}
			&+ \lambda^{4} \; \bbE \sup_{t\in[0,T]}\N{\int_0^t \left(V_\tau^i-\conspoint{\empmeasure{\tau}}\right) d\tau}_2^{4} \\
			&+ \sigma^{4} \; \bbE \sup_{t\in[0,T]}\N{\int_0^t \N{V_\tau^i-\conspoint{\empmeasure{\tau}}}_2 dB_\tau^i}_2^{4},
	\end{split}
	\end{align}
	where we note that the expression $\int_0^t \Nnormal{V_\tau^i-\conspoint{\empmeasure{\tau}}}_2 \,dB_\tau^i$ appearing in the third term of the right-hand side is a martingale, which is a consequence of~\cite[Corollary~3.2.6]{oksendal2013stochastic} combined with the regularity established in~\cite[Lemma~3.4]{carrillo2018analytical}.
	This allows to apply the Burkholder-Davis-Gundy inequality~\cite[Chapter~IV, Theorem~4.1]{RevuzYor1999martingales}, which yields
	\begin{align*}
	\begin{split}
		\bbE \sup_{t\in[0,T]}\N{\int_0^t \N{V_\tau^i-\conspoint{\empmeasure{\tau}}}_2 dB_\tau^i}_2^{4}
		&\lesssim \bbE \left(\int_0^T \N{V_\tau^i-\conspoint{\empmeasure{\tau}}}_2^2 d\tau\right)^2.
	\end{split}
	\end{align*}
	Let us stress that the constant appearing in the latter estimate depends on the dimension $d$.
	Further bounding this as well as the second term of the right-hand side in~\eqref{eq:proof:prop:MF:EsupVi} by means of Jensen's inequality and  utilizing~\cite[Lemma~3.3]{carrillo2018analytical} yields
	\begin{align} \label{eq:proof:prop:MF:EsupVi_2}
	\begin{split}
		\bbE \sup_{t\in[0,T]}\N{V_t^i}_2^{4}
		\leq 
			C \left(1 + \bbE \N{V_0^i}_2^{4}
			+ \bbE \int_0^T \N{V_\tau^i}_2^{4} + \int \N{v}_2^{4} d\empmeasure{\tau}(v) \, d\tau \right)
	\end{split}
	\end{align}
	with a constant~$C=C(\lambda,\sigma,d,T, b_1,b_2)$.
	Averaging~\eqref{eq:proof:prop:MF:EsupVi_2} over $i$ allows to bound
	\begin{align*} 
	\begin{split}
		\bbE \!\sup_{t\in[0,T]} \int \!\N{v}_2^{4} d\empmeasure{t}(v)
		\leq 
			C \left(1 + \bbE \!\int\! \N{v}_2^{4} d\empmeasure{0}(v)
			+ 2\int_0^T \!\bbE \!\sup_{\widehat\tau\in[0,\tau]} \int \!\N{v}_2^{4} d\empmeasure{\widehat\tau}(v) \, d\tau \right),
	\end{split}
	\end{align*}
	which, after applying Gr\"onwall's inequality, ensures that the left-hand side is bounded independently of $N$ by a constant~$K\!=\!K(\lambda,\sigma,d,T, b_1,b_2)$.
	With analogous arguments we can show $\bbE \sup_{t\in[0,T]} \!\int\! \Nnormal{v}_2^{4}\, d\overbar\rho^N_{t}(v) \!\leq\! K$.
	Equation~\eqref{eq:prop:MFL:OmegaM} follows now from Markov's inequality.
\end{proof}

We now present the proof of Proposition~\ref{prop:MFL}.

\begin{proof}[Proof of Proposition~\ref{prop:MFL}]
	By exploiting the boundedness thanks to Lemma~\ref{lem:PBND} through a cutoff technique, we can follow the steps taken in~\cite[Theorem~3.1]{fornasier2020consensus_hypersurface_wellposedness}.

	\noindent
	Let us define the cutoff function
	\begin{align*} 
		I_M(t)=
		\begin{cases}
			1, & \text{ if } \frac{1}{N}\sum_{i=1}^N \max\left\{\N{V_\tau^i}_2^4,\N{\overbar{V}_\tau^i}_2^4\right\} \leq M \text{ for all } \tau\in[0,t],\\
			0, & \text{ else},
		\end{cases}
	\end{align*}
	which is adapted to the natural filtration and has the property $I_M(t)=I_M(t)I_M(\tau)$ for all $\tau\in[0,t]$.
	With Jensen's inequality and It\^o isometry this allows to derive
	\begin{align} \label{eq:proof:prop:MF:ENV-Vbar}
	\begin{split}
		\bbE\N{V_t^i-\overbar{V}_t^i}_2^2I_M(t)
		\lesssim 
			c \int_0^t \bbE\left(\N{V_\tau^i-\overbar{V}_\tau^i}_2^2 + \N{\conspoint{\empmeasure{\tau}}-\conspoint{\rho_\tau}}_2^2\right)I_M(\tau) \, d\tau
	\end{split}
	\end{align}
	for $c = \left(\lambda^2T+\sigma^2\right)$.
	Here we directly used that the processes $V_t^i$ and $\overbar{V}_t^i$ share the initial data as well as the Brownian motion paths.
	In what follows, let us denote by $\monopmeasure{\tau}$ the empirical measure of the processes~$\overbar{V}_\tau^i$.
	Then, by using the same arguments as in the proofs of~\cite[Lemma~3.2]{carrillo2018analytical} and \cite[Lemma~3.1]{fornasier2020consensus_hypersurface_wellposedness} with the care of taking into consideration the multiplication with the random variable~$I_M(\tau)$, we obtain
	\begin{align*}
	\begin{split}
		\bbE\!\N{\conspoint{\empmeasure{\tau}}\!-\!\conspoint{\rho_\tau}}_2^2\!I_M(\tau)
        &\!\!\lesssim \!\bbE\!\N{\conspoint{\empmeasure{\tau}}\!-\!\conspoint{\monopmeasure{\tau}}}_2^2 \!I_M(\tau) \!+\! \bbE\!\N{\conspoint{\monopmeasure{\tau}}\!-\!\conspoint{\rho_\tau}}_2^2 \!I_M(\tau)\! \\
		&\!\!\leq C\left(\max_{i=1,\dots,N}\bbE \N{V_\tau^i\!-\!\overbar{V}_\tau^i}_2^2I_M(\tau) + N^{-1}\right)
	\end{split}
	\end{align*}
	for a constant $C=C(\alpha,C_1,C_2,M,\CM_2,b_1,b_2)$.
	After plugging the latter into~\eqref{eq:proof:prop:MF:ENV-Vbar} and taking the maximum over $i$, the quantitative mean-field approximation result~\eqref{eq:prop:MFL:mean-field approximation} follows from an application of Gr\"onwall's inequality after recalling the definition of the conditional expectation and noting that $\mathbbm{1}_{\Omega_M}\leq I_M(t)$ pointwise and for all $t\in[0,T]$.
\end{proof}

\section{Conclusions} \label{sec:conclusions}
In this paper we establish the convergence of consensus-based optimization (CBO) methods to the global minimizer.
The proof technique is based on the novel insight that  the dynamics of individual agents follow, on average over all realizations of Brownian motion paths, the gradient flow dynamics associated with the map $v\mapsto \Nnormal{v-\globmin}_2^2$, where $\globmin$ is the global minimizer of the objective~$\CE$.
This implies that CBO methods are barely influenced by the local energy landscape of $\CE$, suggesting a high degree of robustness and versatility of the method.
As opposed to \revisedTwo{restrictive} concentration conditions on the initial agent configuration $\rho_0$ in the analyses in \cite{carrillo2018analytical,ha2020convergenceHD,ha2021convergence,fornasier2020consensus_sphere_convergence}, \revisedTwo{our} result holds under \revisedTwo{mild} assumptions about the initial distribution $\rho_0$.
Furthermore, we merely require local Lipschitz continuity \revisedTwo{and a certain tractability condition} about the objective $\CE$, relaxing the regularity requirement~$\CE\in\CC^2(\bbR^d)$ \revisedTwo{together with further assumptions} from prior works.
In order to demonstrate the relevance of the result of convergence in mean-field law for establishing a complete convergence proof of the original numerical scheme~\eqref{eq:dyn_micro_discrete}, we prove a probabilistic quantitative result about the mean-field approximation, which connects the finite particle regime with the mean-field limit.
With this we close the gap regarding the mean-field approximation of CBO and provide the first, and so far unique, holistic convergence proof of CBO on the plane.

We believe that the proposed analysis strategy can be adopted to other recently developed adaptations of the CBO algorithm, such as CBO methods tailored to manifold optimization problems \cite{fornasier2020consensus_hypersurface_wellposedness,fornasier2020consensus_sphere_convergence}, \revised{polarized CBO adjusted to identify multiple minimizers simultaneously~\cite{bungert2022polarized},} as well as related metaheuristics including, for instance, Particle Swarm Optimization~\cite{kennedy1995particle,grassi2020particle,qiu2022PSOconvergence}, \revised{which can be regarded as a second-order variant of CBO with inertia~\cite{grassi2020particle,cipriani2021zero}.}
For CBO with anisotropic Brownian motions, which are especially relevant in high-dimensional optimization problems~\cite{carrillo2019consensus}, \revised{for CBO with memory effects and gradient information, which can be beneficial in signal processing and machine learning applications~\cite{riedl2022leveraging,trillos2023FedCBO},} \revised{for CBO reconfigured for multi-objective optimization,} as well as for constrained CBO, this has already been done in~\cite{fornasier2021convergence}, \cite{riedl2022leveraging}, \cite{borghi2022adaptive}, and~\cite{borghi2021constrained}, respectively.

\section*{Acknowledgments} 
The authors would like to profusely thank Hui Huang for many fruitful and stimulating discussions about the topic.

This work has been funded by the German Federal Ministry of Education and Research and the Bavarian State Ministry for Science and the Arts.
The authors of this work take full responsibility for its content.
MF further acknowledges the support of the DFG Project ``Identification of Energies from Observations of Evolutions'' and the DFG SPP 1962 ``Non-smooth and Complementarity-Based Distributed Parameter Systems: Simulation and Hierarchical Optimization''.
TK acknowledges the support of the Technical University of Munich for hosting him while conducting the work on this manuscript.
KR acknowledges the financial support from the Technical University of Munich -- Institute for Ethics in Artificial Intelligence (IEAI).

\bibliographystyle{abbrv}
\bibliography{biblio.bib}

\newpage
\renewcommand*{\thesection}{\Alph{section}}
\setcounter{section}{1}
\section*{Appendix: Extended proof details of Section~\ref{subsec:convergence_probability}}

\begin{proof}[Extended proof of Lemma~\ref{lem:PBND}]
By combining the ideas of \cite[Lemma~3.4]{carrillo2018analytical} with a Doob-like inequality, we derive a bound for $\bbE\sup_{t\in[0,T]}\frac{1}{N}\sum_{i=1}^N \max\left\{\Nnormal{V_t^i}_2^4,\Nnormal{\overbar{V}_t^i}_2^4\right\}$, which ensures that $\empmeasure{t},\overbar\rho^N_t\in\CP_4(\bbR^d)$ with high probability.
Here, $\overbar\rho^N$ denotes the empirical measure associated with the processes~$(\overbar{V}^i)_{i=1,\dots,N}$.
For notational simplicity, but without loss of generality, we restrict ourselves to the case~$\cutoffnoarg\equiv1$ in what follows.

\noindent
By employing the inequality $(x+y)^q\leq 2^{q-1} (x^q+y^q)$, $q\geq1$ we note that
\begin{equation*}
\begin{split}
	\N{V_t^i}_2^{2p}
	\leq 
		2^{2p-1} \N{V_0^i}_2^{2p}
		&+ 2^{2(2p-1)}\lambda^{2p} \N{\int_0^t \left(V_\tau^i-\conspoint{\empmeasure{\tau}}\right) d\tau}_2^{2p} \\
		&+ 2^{2(2p-1)}\sigma^{2p} \N{\int_0^t \N{V_\tau^i-\conspoint{\empmeasure{\tau}}}_2 dB_\tau^i}_2^{2p}
\end{split}
\end{equation*}
for all~$i=1,\dots,N$.
Taking first the supremum over $t\in[0,T]$ and consecutively the expectation on both sides of the former inequality yields
\begin{equation} \label{app:eq:proof:prop:MF:EsupVi}
\begin{split}
	\bbE \sup_{t\in[0,T]}\N{V_t^i}_2^{2p}
	\leq 
		2^{2p-1} \bbE \N{V_0^i}_2^{2p}
		&+ 2^{2(2p-1)}\lambda^{2p} \bbE \sup_{t\in[0,T]}\N{\int_0^t \left(V_\tau^i-\conspoint{\empmeasure{\tau}}\right) d\tau}_2^{2p} \\
		&+ 2^{2(2p-1)}\sigma^{2p} \bbE \sup_{t\in[0,T]}\N{\int_0^t \N{V_\tau^i-\conspoint{\empmeasure{\tau}}}_2 dB_\tau^i}_2^{2p}.
\end{split}
\end{equation}
The second term on the right-hand side of~\eqref{app:eq:proof:prop:MF:EsupVi} can be further bounded by
\begin{equation} \label{app:eq:proof:prop:MF:EsupVi_term2}
\begin{split}
	\bbE \sup_{t\in[0,T]}\N{\int_0^t \left(V_\tau^i-\conspoint{\empmeasure{\tau}}\right) d\tau}_2^{2p}
	&\leq \max\{1,T^{2p-1}\} \bbE \sup_{t\in[0,T]}\int_0^t \N{V_\tau^i-\conspoint{\empmeasure{\tau}}}_2^{2p} d\tau \\
	&\leq \max\{1,T^{2p-1}\} \bbE \int_0^T \N{V_\tau^i-\conspoint{\empmeasure{\tau}}}_2^{2p} d\tau
\end{split}
\end{equation}
as a consequence of Jensen's inequality.
For the third term on the right-hand side of~\eqref{app:eq:proof:prop:MF:EsupVi} we first note that the expression $\int_0^t \N{V_\tau^i-\conspoint{\empmeasure{\tau}}}_2 dB_\tau^i$ is a martingale.
This is due to~\cite[Corollary~3.2.6]{oksendal2013stochastic} since its expected quadratic variation is finite as required by~\cite[Definition~3.1.4]{oksendal2013stochastic}.
The latter immediately follows from the regularity established in~\cite[Lemma~3.4]{carrillo2018analytical}.
Therefore we can apply the Burkholder-Davis-Gundy inequality~\cite[Chapter~IV, Theorem~4.1]{RevuzYor1999martingales}, which gives for a generic constant~$C_{2p}$ depending only on the dimension~$d$ the bound
\begin{equation} \label{app:eq:proof:prop:MF:EsupVi_term3}
\begin{split}
	\bbE \sup_{t\in[0,T]}\N{\int_0^t \N{V_\tau^i-\conspoint{\empmeasure{\tau}}}_2 dB_\tau^i}_2^{2p}
	&\leq C_{2p} \sup_{t\in[0,T]}\bbE \left(\int_0^t \N{V_\tau^i-\conspoint{\empmeasure{\tau}}}_2^2 d\tau\right)^p \\
	&\leq C_{2p} \max\{1,T^{p-1}\} \bbE \int_0^T \N{V_\tau^i-\conspoint{\empmeasure{\tau}}}_2^{2p} d\tau.
\end{split}
\end{equation}
Here, the latter step is again due to Jensen's inequality.
The right-hand sides of \eqref{app:eq:proof:prop:MF:EsupVi_term2} and \eqref{app:eq:proof:prop:MF:EsupVi_term3} can be further bounded since
\begin{equation} \label{app:eq:proof:prop:MF:EsupVi_terms_aux}
\begin{split}
	\bbE \int_0^T \N{V_\tau^i-\conspoint{\empmeasure{\tau}}}_2^{2p} d\tau
	&\leq 2^{2p-1}\bbE \int_0^T \left(\N{V_\tau^i}_2^{2p} + \N{\conspoint{\empmeasure{\tau}}}_2^{2p}\right) d\tau \\
	&\leq 2^{2p-1}\bbE \int_0^T \left(\N{V_\tau^i}_2^{2p} + 2^{p-1}\left(b_1^p + b_2^p \int \N{v}_2^{2p} d\empmeasure{\tau}(v)\right)\right) d\tau,
\end{split}
\end{equation}
where in the last step we made use of~\cite[Lemma~3.3]{carrillo2018analytical}, which shows that
\begin{equation*}
\begin{split}
	\N{\conspoint{\empmeasure{\tau}}}_2^{2}
	&\leq \int \N{v}_2^2 \frac{\omegaa(v)}{\N{\omegaa}_{L_1(\empmeasure{t})}} \,d\empmeasure{\tau}(v)
	\leq b_1 + b_2 \int \N{v}_2^2 d\empmeasure{\tau}(v),
\end{split}
\end{equation*}
with $b_1=0$ and $b_2=e^{\alpha(\overline{\CE}-\underline{\CE})}$ in the case that $\CE$ is bounded, and
\begin{equation} \label{app:eq:proof:constantsb1b2}
	b_1=C_4^2+b_2
	\quad\text{and}\quad
	b_2=2\frac{C_2}{C_3}\left(1+\frac{1}{\alpha C_3}\frac{1}{C_4^2}\right)
\end{equation}
in the case that $\CE$ satisfies the coercivity assumption~\eqref{eq:quadratic_growth_condition_car}.
Inserting the upper bounds~\eqref{app:eq:proof:prop:MF:EsupVi_term2} and \eqref{app:eq:proof:prop:MF:EsupVi_term3} together with the estimate~\eqref{app:eq:proof:prop:MF:EsupVi_terms_aux} into~\eqref{app:eq:proof:prop:MF:EsupVi} yields 
\begin{equation} \label{app:eq:proof:prop:MF:EsupVi_2}
\begin{split}
	\bbE \sup_{t\in[0,T]}\N{V_t^i}_2^{2p}
	\leq 
		C \left(1 + \bbE \N{V_0^i}_2^{2p}
		+ \bbE \int_0^T \N{V_\tau^i}_2^{2p} + \int \N{v}_2^{2p} d\empmeasure{\tau}(v) \, d\tau \right)
\end{split}
\end{equation}
with a constant~$C=C(p,\lambda,\sigma,d,T, b_1,b_2)$.
Averaging~\eqref{app:eq:proof:prop:MF:EsupVi_2} over $i$ allows to bound
\begin{equation*} 
\begin{split}
	\bbE \sup_{t\in[0,T]} \int \N{v}_2^{2p} d\empmeasure{t}(v)
	&\leq 
		C \left(1 + \bbE \int \N{v}_2^{2p} d\empmeasure{0}(v)
		+ 2\int_0^T \bbE \int \N{v}_2^{2p} d\empmeasure{\tau}(v) \, d\tau \right), \\
	&\leq 
		C \left(1 + \bbE \int \N{v}_2^{2p} d\empmeasure{0}(v)
		+ 2\int_0^T \bbE \sup_{\widetilde\tau\in[0,\tau]} \int \N{v}_2^{2p} d\empmeasure{\widetilde\tau}(v) \, d\tau \right),
\end{split}
\end{equation*}
which ensures after an application of Gr\"onwall's inequality, that $\bbE \sup_{t\in[0,T]} \int \N{v}_2^{2p} d\empmeasure{t}(v)$ is bounded independently of $N$ provided $\rho_0\in\CP_{2p}(\bbR^d)$.
Since this holds by the assumption $\rho_0\in\CP_{4}(\bbR^d)$ for $p=2$, there exists a constant~$K=K(\lambda,\sigma,d,T, b_1,b_2)$, in particular independently of $N$, such that $\bbE \sup_{t\in[0,T]} \int \N{v}_2^{4} d\empmeasure{t}(v) \leq K$.

\noindent
Following analogous arguments for $\overbar{V}_t^i$ allows to derive
\begin{equation}
\begin{split}
	\bbE \sup_{t\in[0,T]}\N{\overbar{V}_t^i}_2^{2p}
	\leq 
		C \left(1 + \bbE \N{\overbar{V}_0^i}_2^{2p}
		+ \bbE \int_0^T \N{\overbar{V}_\tau^i}_2^{2p} + \int \N{v}_2^{2p} d\rho_\tau(v) \, d\tau \right)
\end{split}
\end{equation}
in place of~\eqref{app:eq:proof:prop:MF:EsupVi_2}.
Noticing that $\int \N{v}_2^{2p} d\rho_\tau(v) = \bbE\N{\overbar{V}_\tau^i}_2^{2p}$ for all $\tau\in[0,T]$ and averaging the latter over $i$ directly permits to prove $\bbE \sup_{t\in[0,T]} \int \N{v}_2^{2p} d\overbar\rho_{t}(v) \leq K$ by applying Gr\"onwall's inequality, again provided that $\rho_0\in\CP_{2p}(\bbR^d)$.
With this being the case for $p=2$ and by choosing $K$ sufficiently large for either estimate, the statement follows from a union bound and Markov's inequality.
More precisely,
\begin{equation*}
\begin{split}
	&\bbP\left(\sup_{t\in[0,T]} \frac{1}{N}\sum_{i=1}^N \max\left\{\N{V_t^i}_2^{4},\N{\overbar{V}_t^i}_2^{4}\right\} > M\right) \\
	&\qquad\qquad\qquad
	\leq \bbP\left(\sup_{t\in[0,T]} \frac{1}{N}\sum_{i=1}^N \N{V_t^i}_2^{4} > M\right) + \bbP\left(\sup_{t\in[0,T]} \frac{1}{N}\sum_{i=1}^N \N{\overbar{V}_t^i}_2^{4} > M\right) \\
	&\qquad\qquad\qquad
	\leq \frac{\bbE \sup_{t\in[0,T]} \frac{1}{N}\sum_{i=1}^N \Nnormal{V_t^i}_2^{4}}{M} + \frac{\bbE \sup_{t\in[0,T]} \frac{1}{N}\sum_{i=1}^N \Nnormal{\overbar V_t^i}_2^{4}}{M}
	\leq 2 \frac{K}{M}.
\end{split}
\end{equation*}
\end{proof}

\begin{proof}[Extended proof of Proposition~\ref{prop:MFL}]
By exploiting the boundedness of the dynamics established in Lemma~\ref{lem:PBND} through a cutoff technique, we can follow the steps taken in~\cite[Theorem~3.1]{fornasier2020consensus_hypersurface_wellposedness}.
For notational simplicity, we restrict ourselves to the case~$\cutoffnoarg\equiv1$ in what follows.
However, at the expense of minor technical modifications, the proof can be extended to the case of a Lipschitz-continuous active function~$\cutoffnoarg$.

\noindent
Let us define the cutoff function
\begin{equation} \label{app:eq:proof:prop:MF:cutoff}
	I_M(t)=
	\begin{cases}
		1, & \text{ if } \frac{1}{N}\sum_{i=1}^N \max\left\{\N{V_\tau^i}_2^4,\N{\overbar{V}_\tau^i}_2^4\right\} \leq M \text{ for all } \tau\in[0,t],\\
		0, & \text{ else},
	\end{cases}
\end{equation}
which is adapted to the natural filtration and has the property $I_M(t)=I_M(t)I_M(\tau)$ for all $\tau\in[0,t]$.
This allows to obtain for $\bbE\N{V_t^i-\overbar{V}_t^i}_2^2I_M(t)$ the inequality
\begin{equation*} 
\begin{split}
	\bbE\N{V_t^i-\overbar{V}_t^i}_2^2I_M(t)
	&\leq 2\bbE\N{V_0^i-\overbar{V}_0^i}_2^2 \\
		&\quad\, +4\lambda^2 \bbE\N{\int_0^t \left(\left(V_\tau^i-\conspoint{\empmeasure{\tau}}\right)-\left(\overbar{V}^i_\tau - \conspoint{\rho_\tau}\right)\right) I_M(\tau)\, d\tau}_2^2 \\
		&\quad\, +4\sigma^2 \bbE\N{\int_0^t \left(\N{V_\tau^i-\conspoint{\empmeasure{\tau}}}_2 - \N{\overbar{V}^i_\tau-\conspoint{\rho_\tau}}_2\right) I_M(\tau)\, dB_\tau^i}_2^2 \\
	&\leq 2\bbE\N{V_0^i-\overbar{V}_0^i}_2^2 \\
		&\quad\, +8\lambda^2T \bbE\int_0^t \left(\N{V_\tau^i-\overbar{V}^i_\tau}_2^2 + \N{\conspoint{\empmeasure{\tau}} - \conspoint{\rho_\tau}}_2^2\right)I_M(\tau)\, d\tau \\
		&\quad\, +8\sigma^2 \bbE\int_0^t \left(\N{V_\tau^i-\overbar{V}^i_\tau}_2^2 + \N{\conspoint{\empmeasure{\tau}}-\conspoint{\rho_\tau}}_2^2\right) I_M(\tau)\, d\tau,
\end{split}
\end{equation*}
where we used in the first step that the processes~$V_\tau^i$ and $\overbar{V}^i_\tau$ share the Brownian motion paths, and in the second step both It\^o isometry and Jensen's inequality.
Noting further that the processes also share the initial data, we are left with
\begin{equation} \label{app:eq:proof:prop:MF:ENV-Vbar}
\begin{split}
	\bbE\N{V_t^i-\overbar{V}_t^i}_2^2I_M(t)
	\leq 
		8\left(\lambda^2T+\sigma^2\right) \int_0^t \bbE\left(\N{V_\tau^i-\overbar{V}_\tau^i}_2^2 + \N{\conspoint{\empmeasure{\tau}}-\conspoint{\rho_\tau}}_2^2\right)I_M(\tau) \, d\tau,
\end{split}
\end{equation}
where it remains to control $\bbE\Nnormal{\conspoint{\empmeasure{\tau}}-\conspoint{\rho_\tau}}_2^2I_M(\tau)$.
By means of Lemmas~\ref{app:lem:Wstability_conspoint} and~\ref{app:lem:sampling_conspoint} below we have the bound
\begin{equation} \label{app:eq:proof:prop:MFL:conspoint:difference}
\begin{split}
	\bbE\N{\conspoint{\empmeasure{\tau}}-\conspoint{\rho_\tau}}_2^2I_M(\tau)
	&\leq 2\bbE\N{\conspoint{\empmeasure{\tau}}-\conspoint{\monopmeasure{\tau}}}_2^2 I_M(\tau) + 2\bbE\N{\conspoint{\monopmeasure{\tau}}-\conspoint{\rho_\tau}}_2^2 I_M(\tau) \\
	&\leq C\left(\frac{1}{N}\sum_{i=1}^N \bbE\N{V^i_\tau-\overbar{V}^i_\tau}_2^2I_M(\tau) + N^{-1}\right) \\
	&\leq C\left(\max_{i=1,\dots,N}\bbE \N{V_\tau^i-\overbar{V}_\tau^i}_2^2I_M(\tau) + N^{-1}\right)
\end{split}
\end{equation}
for a constant $C=C(\alpha,C_1,C_2,M,\CM_2,b_1,b_2)$.
After integrating the bound~\eqref{app:eq:proof:prop:MFL:conspoint:difference} into~\eqref{app:eq:proof:prop:MF:ENV-Vbar} and taking the maximum over $i$ we are left with 
\begin{equation} \label{app:eq:proof:prop:MF:ENV-Vbar_sup}
\begin{split}
	\max_{i=1,\dots,N}\bbE\N{V_t^i-\overbar{V}_t^i}_2^2I_M(t)
	\leq 
		C \int_0^t \max_{i=1,\dots,N}\bbE\N{V_\tau^i-\overbar{V}_\tau^i}_2^2I_M(\tau) \, d\tau + CTN^{-1},
\end{split}
\end{equation}
where $C$ depends additionally on $\lambda$, $\sigma$ and $T$, i.e., $C=C(\alpha,\lambda,\sigma,T,C_1,C_2,M,\CM_2,b_1,b_2)$.
The second part of the statement now follows from an application of Gr\"onwall's inequality and by noting that $\mathbbm{1}_{\Omega_M}\leq I_M(t)$ pointwise and for all $t\in[0,T]$.
\end{proof}

\begin{lemma} \label{app:lem:Wstability_conspoint}
	Let $I_M$ be as defined in~\eqref{app:eq:proof:prop:MF:cutoff}.
	Then, under the assumptions of Theorem~\ref{thm:well-posedness_FP}, it holds
	\begin{equation*} 
	\begin{split}
		\N{\conspoint{\empmeasure{\tau}}-\conspoint{\monopmeasure{\tau}}}_2^2I_M(\tau)
		&\leq C \frac{1}{N}\sum_{i=1}^N \N{V^i_\tau-\overbar{V}^i_\tau}_2^2I_M(\tau)
	\end{split}
	\end{equation*}
	for a constant $C=C(\alpha,C_1,C_2,M)$.
\end{lemma}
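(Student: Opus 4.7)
The plan is to reduce the claim to the Wasserstein stability estimate for the weighted mean $\conspointnoarg$ established in \cite[Lemma~3.2]{carrillo2018analytical}, exploiting the cutoff $I_M$ to secure the fourth moment control that this stability estimate requires. First, I split into cases. On the event $\{I_M(\tau)=0\}$ both sides vanish pointwise, so there is nothing to prove. It therefore suffices to work on the event $\{I_M(\tau)=1\}$, where by definition
\begin{equation*}
	\frac{1}{N}\sum_{i=1}^N \max\bigl\{\N{V_\tau^i}_2^4,\N{\overbar V_\tau^i}_2^4\bigr\}\leq M.
\end{equation*}
In particular, on this event both empirical measures $\empmeasure{\tau}$ and $\monopmeasure{\tau}$ lie in $\CP_4(\bbR^d)$ with fourth moments bounded by $M$.

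Second, I invoke \cite[Lemma~3.2]{carrillo2018analytical} (or Step~3 of the proof of Theorem~\ref{thm:well-posedness_FP_cutoff}, where the same stability estimate is recalled), which states that for any two measures $\mu_1,\mu_2\in\CP_4(\bbR^d)$ with fourth moments bounded by a common constant~$M$, there exists $C=C(\alpha,C_1,C_2,M)$ such that
\begin{equation*}
	\N{\conspoint{\mu_1}-\conspoint{\mu_2}}_2 \leq C\, W_2(\mu_1,\mu_2).
\end{equation*}
The dependence of $C$ on $\alpha$ and on the Lipschitz/growth constants $C_1,C_2$ from \eqref{eq:lipschitz_condition_car}--\eqref{eq:quadratic_boundedness_condition_car} comes from bounding the weight function $\omegaa(v)=\exp(-\alpha\CE(v))$ and its first derivative, and the dependence on $M$ comes from bounding the moments appearing in the stability proof via the uniform fourth moment estimate. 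Applying this with $\mu_1=\empmeasure{\tau}$ and $\mu_2=\monopmeasure{\tau}$ yields, on $\{I_M(\tau)=1\}$,
\begin{equation*}
	\N{\conspoint{\empmeasure{\tau}}-\conspoint{\monopmeasure{\tau}}}_2^2 \leq C^2\, W_2^2(\empmeasure{\tau},\monopmeasure{\tau}).
\end{equation*}

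Third, I bound $W_2^2(\empmeasure{\tau},\monopmeasure{\tau})$ by means of a specific (in general suboptimal) coupling. Since both measures are empirical measures supported on $N$ atoms indexed by the same label set, the product measure $\pi=\frac{1}{N}\sum_{i=1}^N \delta_{(V_\tau^i,\overbar V_\tau^i)}$ is a valid coupling of $\empmeasure{\tau}$ and $\monopmeasure{\tau}$, and by the definition~\eqref{def:wassersteindistance} of the Wasserstein-$2$ distance,
\begin{equation*}
	W_2^2(\empmeasure{\tau},\monopmeasure{\tau}) \leq \int\N{v_1-v_2}_2^2\,d\pi(v_1,v_2) = \frac{1}{N}\sum_{i=1}^N \N{V_\tau^i-\overbar V_\tau^i}_2^2.
\end{equation*}
Multiplying both sides of the chain of inequalities by $I_M(\tau)$ (which is either $0$ or $1$) and setting the constant in the statement to the same $C$ (renamed to absorb the square) completes the proof.

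I expect the only subtle point to be verifying that the stability constant from \cite[Lemma~3.2]{carrillo2018analytical} can indeed be cast in the form $C(\alpha,C_1,C_2,M)$; this is where the cutoff $I_M$ is essential, because without it the empirical fourth moments would be random and no deterministic Lipschitz constant for $\mu\mapsto \conspoint{\mu}$ would be available. All remaining steps are routine.
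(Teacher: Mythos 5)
Your proposal is correct, but it takes a more modular route than the paper. The paper's own proof does not invoke \cite[Lemma~3.2]{carrillo2018analytical} as a black box; instead it reproves the stability estimate inline at the level of the empirical sums: it first bounds the normalization via Jensen's inequality and the growth condition~\eqref{eq:quadratic_boundedness_condition_car} (yielding the constant $c_M=\exp(\alpha C_2(1+\sqrt{M}))$, which is exactly where the cutoff $I_M(\tau)$ enters), then decomposes $\conspoint{\empmeasure{\tau}}-\conspoint{\monopmeasure{\tau}}$ into three mixed terms $T_1,T_2,T_3$ and estimates each using the Lipschitz-type condition~\eqref{eq:lipschitz_condition_car} on $\omegaa$, Cauchy--Schwarz, and the uniform fourth-moment bound $M$. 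Your argument reaches the same conclusion by combining the Wasserstein stability of the consensus point (valid for measures in $\CP_4(\bbR^d)$ with fourth moments bounded by $M$, with constant $C(\alpha,C_1,C_2,M)$ built from precisely the same ingredients) with the synchronous coupling $\pi=\frac{1}{N}\sum_i\delta_{(V^i_\tau,\overbar V^i_\tau)}$, which gives $W_2^2(\empmeasure{\tau},\monopmeasure{\tau})\leq \frac{1}{N}\sum_i\Nnormal{V^i_\tau-\overbar V^i_\tau}_2^2$; working pointwise on the event $\{I_M(\tau)=1\}$ is exactly the right way to handle the random indicator, since the stability estimate is a deterministic statement about measures. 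What your version buys is brevity and a clean separation of concerns; what the paper's inline derivation buys is an explicit, self-contained tracking of how the constant depends on $\alpha$, $C_1$, $C_2$ and $M$, and a direct verification that every step tolerates multiplication by the random variable $I_M(\tau)$ — the very point you flag as the remaining subtlety, and which indeed hinges on checking that the constant in the cited stability lemma is obtained from the quadratic growth and Lipschitz-type bounds on $\CE$ together with the moment bound, and from nothing else.
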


\begin{proof}
	The proof follows the steps taken in~\cite[Lemmas~3.1 and~3.2]{carrillo2018analytical}.
	
	\noindent
	Let us first note that by exploiting that the quantity~$\frac{1}{N}\sum_{i=1}^N \N{V^i_\tau}_2^4$ is bounded uniformly by $M$ due to the multiplication with~$I_M(\tau)$, we obtain with Jensen's inequality that
	\begin{equation} \label{app:eq:proof:lem:Wstability_conspoint:aux_statement}
	\begin{split}
		\frac{e^{-\alpha\underbarscript\CE}\,I_M(\tau)}{\frac{1}{N}\sum_{i=1}^N \omegaa(V^i_\tau)}
		&\leq \frac{I_M(\tau)}{\exp\!\big(\!-\!\alpha\frac{1}{N}\sum_{i=1}^N (\CE(V^i_\tau)-\underbar\CE)\big)}
		\leq \frac{I_M(\tau)}{\exp\!\big(\!-\!\alpha C_2\frac{1}{N}\sum_{i=1}^N (1\!+\!\Nnormal{V^i_\tau}_2^2)\big)}\\
		&
		\leq \exp\big(\alpha C_2 (1\!+\!\sqrt{M})\big) =: c_M,
	\end{split}
	\end{equation}
	where, in the second inequality, we used the assumption~\eqref{eq:quadratic_boundedness_condition_car} on $\CE$.
	An analogous statement can be obtained for the processes~$\overbar{V}^i_\tau$.
	
	\noindent
	For the norm of the difference between $\conspoint{\empmeasure{\tau}}$ and $\conspoint{\monopmeasure{\tau}}$ we have the decomposition
	\begin{equation} \label{app:eq:proof:lem:Wstability_conspoint:difference_norm_aux}
	\begin{split}
		\N{\conspoint{\empmeasure{\tau}}-\conspoint{\monopmeasure{\tau}}}_2 I_M(\tau)
		&= \N{
			\frac{\sum_{i=1}^N V^i_\tau \omegaa(V^i_\tau)}{\sum_{j=1}^N \omegaa(V^j_\tau)}
			-
			\frac{\sum_{i=1}^N \overbar{V}^i_\tau \omegaa(\overbar{V}^i_\tau)}{\sum_{j=1}^N \omegaa(\overbar{V}^j_\tau)}
			}_2
			I_M(\tau) \\
		&\leq \left(
			\N{T_1}_2
			+
			\N{T_2}_2
			+
			\N{T_3}_2
			\right)
			I_M(\tau),
	\end{split}
	\end{equation}
	where the terms $T_1$, $T_2$ and $T_3$ are obtained by inserting mixed terms with respect to $V^i_\tau$ and $\overbar{V}^i_\tau$.
	They are defined implicitly below and their norm is bounded as follows.
	For the first term~$T_1$ we have
	\begin{equation} \label{app:eq:proof:lem:Wstability_conspoint:T1}
	\begin{split}
		\N{T_1}_2I_M(\tau)
		&= \N{
		\frac{1}{N}\sum_{i=1}^N\left(V^i_\tau-\overbar{V}^i_\tau\right)\frac{\omegaa(V^i_\tau)}{\frac{1}{N}\sum_{j=1}^N \omegaa(V^j_\tau)}
		}_2I_M(\tau)\\
		&\leq 
		\frac{1}{N}\sum_{i=1}^N\N{V^i_\tau-\overbar{V}^i_\tau}_2\abs{\frac{\omegaa(V^i_\tau)}{\frac{1}{N}\sum_{j=1}^N \omegaa(V^j_\tau)}}
		I_M(\tau)\\
		&\leq 
		\abs{\frac{e^{-\alpha\underbarscript\CE}\,I_M(\tau)}{\frac{1}{N}\sum_{j=1}^N \omegaa(V^j_\tau)}} \, \frac{1}{N}\sum_{i=1}^N\N{V^i_\tau-\overbar{V}^i_\tau}_2I_M(\tau)\\
		&\leq c_M \sqrt{\frac{1}{N}\sum_{i=1}^N\N{V^i_\tau-\overbar{V}^i_\tau}_2^2I_M(\tau)},
	\end{split}
	\end{equation}
	where we made use of \eqref{app:eq:proof:lem:Wstability_conspoint:aux_statement} and Cauchy-Schwarz inequality in the last step.
	For the second term~$T_2$, by using the assumption~\eqref{eq:lipschitz_condition_car} on $\CE$ in the third line and by following similar steps, we obtain
	\begin{equation} \label{app:eq:proof:lem:Wstability_conspoint:T2}
	\begin{split}
		\N{T_2}_2I_M(\tau)
		&= \N{
		\frac{1}{N}\sum_{i=1}^N\left(\omegaa(V^i_\tau)-\omegaa(\overbar{V}^i_\tau)\right)\frac{\overbar{V}^i_\tau}{\frac{1}{N}\sum_{j=1}^N \omegaa(V^j_\tau)}
		I_M(\tau)
		}_2I_M(\tau)\\
		&\leq
		\frac{1}{N}\sum_{i=1}^N \abs{\omegaa(V^i_\tau)-\omegaa(\overbar{V}^i_\tau)}\N{\frac{\overbar{V}^i_\tau}{\frac{1}{N}\sum_{j=1}^N \omegaa(V^j_\tau)}}_2
		I_M(\tau)\\
		&\leq
		\alpha C_1 e^{-\alpha\underbarscript\CE}  \frac{1}{N}\sum_{i=1}^N \left(\N{V^i_\tau}_2+\N{\overbar V^i_\tau}_2\right)\N{V^i_\tau-\overbar{V}^i_\tau}_2 \frac{\N{\overbar{V}^i_\tau}_2}{\frac{1}{N}\sum_{j=1}^N \omegaa(V^j_\tau)} I_M(\tau)\\
		&\leq
		\frac{3}{2}\alpha C_1 \abs{\frac{e^{-\alpha\underbarscript\CE}I_M(\tau)}{\frac{1}{N}\sum_{j=1}^N \omegaa(V^j_\tau)}} \sqrt{\frac{1}{N}\sum_{i=1}^N \left(\N{V^i_\tau}_2^4+\N{\overbar V^i_\tau}_2^4\right)I_M(\tau)}\\
		&\qquad\qquad\quad\cdot\sqrt{\frac{1}{N}\sum_{i=1}^N \N{V^i_\tau-\overbar{V}^i_\tau}_2^2I_M(\tau)}\\
		&\leq 3\alpha C_1c_MM^{\frac{1}{2}} \sqrt{\frac{1}{N}\sum_{i=1}^N \N{V^i_\tau-\overbar{V}^i_\tau}_2^2I_M(\tau)}.
	\end{split}
	\end{equation}
	Analogously, for the third term~$T_3$, we get
	\begin{equation} \label{app:eq:proof:lem:Wstability_conspoint:T3}
	\begin{split}
		\N{T_3}_2I_M(\tau)
		&= \N{
		\frac{\sum_{i=1}^N \overbar{V}^i_\tau \omegaa(\overbar{V}^i_\tau)}{\sum_{j=1}^N \omegaa(V^j_\tau)}
		-
		\frac{\sum_{i=1}^N \overbar{V}^i_\tau \omegaa(\overbar{V}^i_\tau)}{\sum_{j=1}^N \omegaa(\overbar{V}^j_\tau)}
		}_2I_M(\tau)\\
		&\leq
		\frac{1}{N}\sum_{j=1}^N \abs{\omegaa(\overbar{V}^j_\tau)-\omegaa(V^j_\tau)}
		\N{\frac{\frac{1}{N}\sum_{i=1}^N \overbar{V}^i_\tau \omegaa(\overbar{V}^i_\tau)}{\left(\frac{1}{N}\sum_{j=1}^N \omegaa(V^j_\tau)\right)\left(\frac{1}{N}\sum_{j=1}^N \omegaa(\overbar{V}^j_\tau)\right)}
		}_2I_M(\tau)\\
		&\leq
		\alpha C_1e^{-2\alpha\underbarscript\CE}\frac{1}{N}\sum_{j=1}^N \left(\N{V^j_\tau}_2+\N{\overbar V^j_\tau}_2\right)\N{V^j_\tau-\overbar{V}^j_\tau}_2I_M(\tau)\\
		&\qquad\qquad\quad\cdot\frac{\frac{1}{N}\sum_{i=1}^N \N{\overbar{V}^i_\tau}_2 I_M(\tau)}{\left(\frac{1}{N}\sum_{j=1}^N \omegaa(V^j_\tau)\right)\left(\frac{1}{N}\sum_{j=1}^N \omegaa(\overbar{V}^j_\tau)\right)}\\
		&\leq
		\sqrt{2}\alpha C_1c_M^2M^{\frac{1}{4}}\sqrt{\frac{1}{N}\sum_{j=1}^N \left(\Nbig{V^j_\tau}_2^2+\Nbig{\overbar V^j_\tau}_2^2\right)I_M(\tau)} \sqrt{\frac{1}{N}\sum_{j=1}^N\Nbig{V^j_\tau-\overbar{V}^j_\tau}_2^2I_M(\tau)}\\
		&\leq
		2\alpha C_1c_M^2M^{\frac{1}{2}} \sqrt{\frac{1}{N}\sum_{j=1}^N\Nbig{V^j_\tau-\overbar{V}^j_\tau}_2^2I_M(\tau)}.
	\end{split}
	\end{equation}
	By inserting the three individual bounds~\eqref{app:eq:proof:lem:Wstability_conspoint:T1}, \eqref{app:eq:proof:lem:Wstability_conspoint:T2} and \eqref{app:eq:proof:lem:Wstability_conspoint:T3} into~\eqref{app:eq:proof:lem:Wstability_conspoint:difference_norm_aux} and taking the squares of both sides, we obtain the upper bound from the statement.
\end{proof}

\begin{lemma} \label{app:lem:sampling_conspoint}
	Let $\rho_0\in\CP_{2}(\bbR^d)$ and let $I_M$ be as defined in~\eqref{app:eq:proof:prop:MF:cutoff}.
	Then, under the assumptions of Theorem~\ref{thm:well-posedness_FP}, it holds
	\begin{equation}
		\sup_{\tau\in[0,T]} \bbE\N{\conspoint{\monopmeasure{\tau}}-\conspoint{\rho_\tau}}_2^2I_M(\tau)
		\leq CN^{-1}
	\end{equation}
	for a constant $C=C(\alpha,C_2,M,\CM_2,b_1,b_2)$, where $\CM_2$ denotes the second-order moment bound of~$\rho$ and where $b_1$ and $b_2$ are the problem-dependent constants specified in~\eqref{app:eq:proof:constantsb1b2}.
\end{lemma}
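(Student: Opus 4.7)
The plan is to prove the lemma by the standard decomposition for differences of ratios, treating the numerator and denominator of the consensus point separately, and exploiting that the $N$ processes~$\overbar{V}^i_\tau$ are i.i.d.\ copies of $\overbar{V}_\tau\sim\rho_\tau$. Concretely, denote for short
\begin{equation*}
A_N := \frac{1}{N}\sum_{i=1}^N \overbar{V}^i_\tau\,\omegaa(\overbar{V}^i_\tau), \quad
\bar A := \int v\,\omegaa(v)\,d\rho_\tau(v), \quad
B_N := \frac{1}{N}\sum_{i=1}^N \omegaa(\overbar{V}^i_\tau), \quad
\bar B := \int \omegaa(v)\,d\rho_\tau(v),
\end{equation*}
and write $\conspoint{\monopmeasure{\tau}}-\conspoint{\rho_\tau} = (A_N-\bar A)/B_N + \bar A(\bar B - B_N)/(B_N\bar B)$. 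Squaring, using $(x+y)^2\leq 2(x^2+y^2)$ and multiplying by $I_M(\tau)$ yields two terms, each of which I will bound by $C/N$.

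The second step is to control the two denominators. For $B_N$, exactly the calculation in \eqref{app:eq:proof:lem:Wstability_conspoint:aux_statement} applied to the barred processes gives $e^{-\alpha\underbarscript\CE}I_M(\tau)/B_N\leq c_M$, so $I_M(\tau)/B_N^2\leq c_M^2 e^{2\alpha\underbarscript\CE}$. For $\bar B$, which is deterministic, I will use Jensen's inequality together with the quadratic growth~\eqref{eq:quadratic_boundedness_condition_car} and the second-moment bound~$\CM_2$ of $\rho_\tau$ (ensured by $\rho\in\CC([0,T],\CP_4(\bbR^d))$) to obtain the dimension-free lower bound
\begin{equation*}
\bar B = \int e^{-\alpha\CE(v)}\,d\rho_\tau(v)
\geq \exp\!\bigl(-\alpha\!\int\!\CE(v)\,d\rho_\tau(v)\bigr)
\geq \exp\!\bigl(-\alpha C_2(1+\CM_2)\bigr)\cdot e^{-\alpha\underbarscript\CE},
\end{equation*}
so $1/\bar B$ is also deterministically bounded. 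The third step is to compute the two variance-type estimates. Since the $\overbar V^i_\tau$ are i.i.d.\ with law~$\rho_\tau$, a standard i.i.d.\ variance computation gives
\begin{equation*}
\bbE\N{A_N-\bar A}_2^2 = \tfrac{1}{N}\,\mathrm{Var}\bigl(\overbar V^1_\tau\omegaa(\overbar V^1_\tau)\bigr)
\leq \tfrac{1}{N}\bbE\N{\overbar V^1_\tau}_2^2\,\omegaa(\overbar V^1_\tau)^2
\leq \tfrac{e^{-2\alpha\underbarscript\CE}}{N}\CM_2,
\end{equation*}
and similarly $\bbE(B_N-\bar B)^2\leq e^{-2\alpha\underbarscript\CE}/N$ using $0\leq\omegaa\leq e^{-\alpha\underbarscript\CE}$. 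Cauchy-Schwarz also yields $\N{\bar A}_2\leq e^{-\alpha\underbarscript\CE}\CM_2^{1/2}$. Combining the denominator bounds with these $O(N^{-1})$ variance bounds (using $I_M\leq 1$ wherever the cutoff is not needed) collapses each of the two terms in the decomposition to the required $CN^{-1}$, with $C=C(\alpha,C_2,M,\CM_2,b_1,b_2)$.

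The main obstacle, and the reason for the splitting above, is that no cutoff is available to control $1/\bar B$, so we must rely purely on an absolute lower bound coming from Jensen and the a priori moment bound for~$\rho_\tau$ (this is exactly the role of $b_1,b_2$, via \cite[Lemma~3.3]{carrillo2018analytical}, which certify $\CM_2<\infty$ independently of $N$). The other delicate point is that the variance estimate for $A_N$ must give a constant independent of $N$, which is guaranteed by the uniform-in-$\tau$ moment bound on $\rho_\tau$; taking the supremum over $\tau\in[0,T]$ at the very end then yields the statement.
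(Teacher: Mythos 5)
Your proof is correct and follows essentially the same route as the paper: the identical splitting of $\conspoint{\monopmeasure{\tau}}-\conspoint{\rho_\tau}$ into a numerator-difference term over the empirical denominator plus a denominator-difference term, the same cutoff-based pointwise bound on $I_M(\tau)$ divided by the empirical denominator (applied to the barred processes), and the same i.i.d.\ zero-mean variance computations that produce the $N^{-1}$ rate uniformly in $\tau$. The only deviation is in the deterministic factor: the paper recognizes it as $\conspoint{\rho_\tau}$ and invokes $\N{\conspoint{\rho_\tau}}_2^2\leq b_1+b_2\CM_2$ from \cite[Lemma~3.3]{carrillo2018analytical}, whereas you bound the numerator by Cauchy--Schwarz and the denominator from below via Jensen's inequality together with \eqref{eq:quadratic_boundedness_condition_car}; this is equally valid, but it trades the constants $b_1,b_2$ for a factor of order $\exp\left(\alpha C_2(1+\CM_2)\right)$, i.e.\ a worse $\alpha$-dependence of the constant than the paper's.
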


\begin{proof}
	The proof follows the steps taken in~\cite[Lemma~3.1]{fornasier2020consensus_hypersurface_wellposedness}.
	
	\noindent
	By inserting a mixed term, we can bound the norm of the difference between $\conspoint{\monopmeasure{\tau}}$ and $\conspoint{\rho_\tau}$ by
	\begin{equation} \label{app:eq:proof:lem:largedeviation_conspoint:difference_norm_aux}
	\begin{split}
		\N{\conspoint{\monopmeasure{\tau}}-\conspoint{\rho_\tau}}_2 I_M(\tau)
		&= \N{
			\sum_{i=1}^N \overbar{V}^i_\tau\frac{\omegaa(\overbar{V}^i_\tau)}{\sum_{j=1}^N \omegaa(\overbar{V}^j_\tau)}
			-
			\int v\frac{\omegaa(v)}{\N{\omegaa}_{L_1(\rho_\tau)}} \,d\rho_\tau(v)}_2
			I_M(\tau) \\
		&\leq \left(\N{T_1}_2 + \N{T_2}_2\right)I_M(\tau),
	\end{split}
	\end{equation}
	where the terms $T_1$ and $T_2$ are defined implicitly and bounded in what follows.
	By utilizing the bound~\eqref{app:eq:proof:lem:Wstability_conspoint:aux_statement}, for the first term~$T_1$, we get
	\begin{equation} \label{app:eq:proof:lem:largedeviation_conspoint:T1}
	\begin{split}
		\N{T_1}_2I_M(\tau)
		&=\N{\sum_{i=1}^N \overbar{V}^i_\tau\frac{\omegaa(\overbar{V}^i_\tau)}{\sum_{j=1}^N \omegaa(\overbar{V}^j_\tau)}
			-
			\int v\frac{\omegaa(v)}{\frac{1}{N}\sum_{j=1}^N \omegaa(\overbar{V}^j_\tau)} \,d\rho_\tau(v)}_2
			I_M(\tau) \\
		&=\abs{\frac{I_M(\tau)}{\frac{1}{N}\sum_{j=1}^N \omegaa(\overbar{V}^j_\tau)}}\N{\frac{1}{N}\sum_{i=1}^N \overbar{V}^i_\tau\omegaa(\overbar{V}^i_\tau)
			-
			\int v\omegaa(v) \,d\rho_\tau(v)}_2 \\
		&\leq c_Me^{\alpha\underbarscript\CE} \N{\frac{1}{N}\sum_{i=1}^N \overbar{V}^i_\tau\omegaa(\overbar{V}^i_\tau)
			-
			\int v\omegaa(v) \,d\rho_\tau(v)}_2.
	\end{split}
	\end{equation}
	Similarly, for the second term we have
	\begin{equation} \label{app:eq:proof:lem:largedeviation_conspoint:T2}
	\begin{split}
		\N{T_2}_2I_M(\tau)
		&= \N{\int v\frac{\omegaa(v)}{\frac{1}{N}\sum_{j=1}^N \omegaa(\overbar{V}^j_\tau)} \,d\rho_\tau(v)
			-
			\int v\frac{\omegaa(v)}{\N{\omegaa}_{L_1(\rho_\tau)}} \,d\rho_\tau(v)}_2
			I_M(\tau) \\
		&= \abs{\frac{I_M(\tau)}{\frac{1}{N}\sum_{j=1}^N \omegaa(\overbar{V}^j_\tau)}}\N{\conspoint{\rho_\tau}}_2
		\abs{\frac{1}{N}\sum_{j=1}^N \omegaa(\overbar{V}^j_\tau)-\N{\omegaa}_{L_1(\rho_\tau)}} \\
		&\leq c_Me^{\alpha\underbarscript\CE} \sqrt{b_1 + b_2\CM_2}
		\abs{\frac{1}{N}\sum_{j=1}^N \omegaa(\overbar{V}^j_\tau)-\int\omegaa(v) \, d\rho_\tau(v)}	, \\
	\end{split}
	\end{equation}
	where the last step uses that by Jensen's inequality and \cite[Lemma~3.3]{carrillo2018analytical} it holds
	\begin{equation*}
	\begin{split}
		\N{\conspoint{\rho_\tau}}_2^2
		&\leq \int \N{v}_2^2 \frac{\omegaa(v)}{\N{\omegaa}_{L_1(\rho_\tau)}}\,d\rho_\tau(v)\leq b_1 + b_2 \int \N{v}_2^2 d\rho_\tau(v)
		\leq b_1 + b_2\CM_2
	\end{split}
	\end{equation*}
	with constants $b_1$ and $b_2$ as specified in~\eqref{app:eq:proof:constantsb1b2} and $\CM_2$ denoting a bound on the second-order moment of $\rho$, which exists according to the regularity of $\rho$ established in Theorem~\ref{thm:well-posedness_FP} as a consequence of the initial regularity~$\rho_0\in\CP_2(\bbR^d)$.
	In order to further bound \eqref{app:eq:proof:lem:largedeviation_conspoint:T1} and \eqref{app:eq:proof:lem:largedeviation_conspoint:T2}, respectively, let us introduce the random variables
	\begin{equation*}
		Z^i_\tau = \overbar{V}^i_\tau\omegaa(\overbar{V}^i_\tau)-\int v\omegaa(v) \,d\rho_\tau(v)
		\quad\text{ and }\quad
		z^i_\tau = \omegaa(\overbar{V}^i_\tau)-\int \omegaa(v) \,d\rho_\tau(v),
	\end{equation*} 
	which have zero expectation, i.e., $\bbE Z^i_\tau=0$ and $\bbE z^i_\tau=0$.
	Moreover, we observe that
	\begin{equation*}
	\begin{split}
		\frac{1}{N}\sum_{i=1}^N \overbar{V}^i_\tau\omegaa(\overbar{V}^i_\tau)-\int v\omegaa(v) \,d\rho_\tau(v)
		= \frac{1}{N}\sum_{i=1}^N Z^i_\tau
	\end{split}
	\end{equation*}
	and
	\begin{equation*}
	\begin{split}
		\frac{1}{N}\sum_{i=1}^N \omegaa(\overbar{V}^j_\tau)-\int\omegaa(v) \, d\rho_\tau(v)
		= \frac{1}{N}\sum_{i=1}^N z^i_\tau,
	\end{split}
	\end{equation*}
	respectively.
	Moreover, due to the independence of the $\overbar{V}^i_\tau$'s the $Z^i_\tau$'s are independent and thus satisfy $\bbE Z^i_\tau Z^j_\tau = 0$ for $i\neq j$.
	Using this we can rewrite
	\begin{equation} \label{app:eq:proof:lem:largedeviation_conspoint:T1_2}
	\begin{split}
		\bbE\N{\frac{1}{N}\sum_{i=1}^N \overbar{V}^i_\tau\omegaa(\overbar{V}^i_\tau)-\int v\omegaa(v) \,d\rho_\tau(v)}_2^2 
		&= \bbE\N{\frac{1}{N}\sum_{i=1}^N Z^i_\tau}_2^2
		= \frac{1}{N^2}\bbE\sum_{i=1}^N\sum_{j=1}^N \langle Z^i_\tau, Z^j_\tau\rangle\\
		&= \frac{1}{N^2}\bbE\sum_{i=1}^N \N{Z^i_\tau}_2^2
		= \frac{1}{N}\bbE\N{Z^1_\tau}_2^2
		\leq 4e^{-\alpha\underbarscript\CE}\CM_2\frac{1}{N},
	\end{split}
	\end{equation}
	where the inequality in the last step is due to the estimate
	\begin{equation*} 
	\begin{split}
		\bbE\N{Z^1_\tau}_2^2
		&\leq 2\bbE\N{\overbar{V}^1_\tau\omegaa(\overbar{V}^1_\tau)}_2^2+2\N{\int v\omegaa(v) \,d\rho_\tau(v)}_2^2\\
		&\leq 2e^{-\alpha\underbarscript\CE}\left(\bbE\N{\overbar{V}^1_\tau}_2^2+\int \N{v}_2^2 \,d\rho_\tau(v)\right)
		\leq 4e^{-\alpha\underbarscript\CE}\CM_2.
	\end{split} 
	\end{equation*}
	Following analogous arguments and noting that
	\begin{equation*} \label{app:eq:proof:lem:largedeviation_conspoint:T2_2_aux}
	\begin{split}
		\bbE\abs{z^1_\tau}^2
		&\leq 2\bbE\abs{\omegaa(\overbar{V}^j_\tau)}^2 + 2\abs{\int\omegaa(v) \, d\rho_\tau(v)}^2
		\leq 4e^{-\alpha\underbarscript\CE}
	\end{split} 
	\end{equation*}
	yields the inequality
	\begin{equation} \label{app:eq:proof:lem:largedeviation_conspoint:T2_2}
	\begin{split}
		\bbE\abs{\frac{1}{N}\sum_{i=1}^N \omegaa(\overbar{V}^j_\tau)-\int\omegaa(v) \, d\rho_\tau(v)}^2
		= \frac{1}{N}\bbE\abs{z^1_\tau}^2
		\leq  4e^{-\alpha\underbarscript\CE} \frac{1}{N}.
	\end{split}
	\end{equation}
    Taking the square and expectation on both sides of \eqref{app:eq:proof:lem:largedeviation_conspoint:T1} and \eqref{app:eq:proof:lem:largedeviation_conspoint:T2}, inserting the two individual bounds~\eqref{app:eq:proof:lem:largedeviation_conspoint:T1_2} and \eqref{app:eq:proof:lem:largedeviation_conspoint:T2_2}, gives the statement after recalling \eqref{app:eq:proof:lem:largedeviation_conspoint:difference_norm_aux}.
\end{proof}

\end{document}